\providecommand{\MR}{\relax\ifhmode\unskip\space\fi MR }
\providecommand{\href}[2]{#2}
\numberwithin{equation}{section}
\theoremstyle{plain}
\newtheorem{thm}{Theorem}[section]
\newtheorem{prop}[thm]{Proposition}
\newtheorem{lem}[thm]{Lemma}
\newtheorem*{Prop3.2}{Proposition 3.2}
\newtheorem*{thm4.2}{Theorem 4.2}
\newtheorem*{thm*}{Theorem}
\theoremstyle{definition}
\theoremstyle{remark}
\newtheorem{rem}[thm]{Remark}
\newtheorem{ex}[thm]{Example}
\newtheorem*{assumption}{Important assumption}
\newtheorem*{cl}{Claim}
\newcommand{\mbb}[1]{\mathbb{#1}}
\newcommand{\wt}[1]{\widetilde{#1}}
\newcommand{\wh}[1]{\widehat{#1}}
\newcommand{\ol}[1]{\overline{#1}}
\newcommand{\lie}[1]{{\mathfrak{#1}}}
\newcommand{\abs}[1]{\lvert #1\rvert}
\newcommand{\Abs}[1]{\bigl\lvert #1\bigr\rvert}
\newcommand{\norm}[1]{\lVert #1\rVert}
\newcommand{\vct}[2]{\begin{pmatrix}#1\\#2\end{pmatrix}}
\renewcommand{\leq}{\leqslant}
\renewcommand{\geq}{\geqslant}
\DeclareMathOperator{\id}{id}
\DeclareMathOperator{\Ad}{Ad}
\DeclareMathOperator{\Hom}{Hom}
\DeclareMathOperator{\Aut}{Aut}
\DeclareMathOperator{\Pic}{Pic}
\DeclareMathOperator{\Tr}{Tr}
\DeclareMathOperator{\rk}{rk}
\DeclareMathOperator{\eps}{\varepsilon}
\DeclareMathOperator{\codim}{codim}
\DeclareMathOperator{\diag}{diag}
\DeclareMathOperator{\kod}{kod}
\subjclass[2010]{32M05; 32M10; 32J25}
\title{Schottky groups acting on homogeneous rational manifolds}
\author{Christian Miebach}
\address{Univ.~Littoral C\^ote d'Opale, EA 2797 - LMPA - Laboratoire de 
math\'ematiques pures et appliqu\'ees Joseph Liouville, F-62228 Calais, 
France}
\email{christian.miebach@univ-littoral.fr}
\author{Karl Oeljeklaus}
\address{Aix-Marseille Univ, CNRS, Centrale Marseille, I2M, UMR 7373,
CMI, 39, rue F.~Joli\-ot-Curie, 13453 Marseille  Cedex 13, France}
\email{karl.oeljeklaus@univ-amu.fr}
\thanks{The authors would like to thank A.~T.~Huckleberry and P.~Heinzner for
invitations to the Ruhr-Universit\"at Bochum, Germany, where part of the
work was done. The first author is grateful for the hospitality of the Institut
de Math\'ematiques de Marseille (I2M) as well as for an invitation to the
Fakult\"at f\"ur Mathematik of the Universit\"at Duisburg-Essen by D.~Greb. The
second author is partially supported by the ANR project MNGNK, decision
\#ANR-10-BLAN-0118.}
\begin{document}

\begin{abstract}
We systematically study Schottky group actions on homogeneous rational 
ma\-nifolds and find two new families besides those given by Nori's well-known 
construction. This yields new examples of non-K\"ahler compact complex manifolds 
having free fundamental groups. We then investigate their analytic and geometric 
invariants such as the Kodaira and algebraic dimension, the Picard group and 
the deformation theory, thus extending results due to L\'arusson and to Seade 
and Verjovsky. As a byproduct, we see that the Schottky construction allows to 
recover examples of equivariant compactifications of 
${\rm{SL}}(2,\mbb{C})/\Gamma$ for $\Gamma$ a discrete free loxodromic subgroup 
of ${\rm{SL}}(2,\mbb{C})$, previously obtained by A. Guillot.
\end{abstract}

\maketitle

\section{Introduction}

A classical Schottky group acting on the Riemann sphere $\mbb{P}_1$ is given as
follows. Choose $2r$ open discs $U_1,V_1,\dotsc,U_r,V_r\subset\mbb{P}_1$ having
pairwise disjoint closures as well as $r$ loxodromic automorphisms 
$\gamma_1,\dotsc,\gamma_r$ of $\mbb{P}_1$ satisfying $\gamma_j(U_j)=\mbb{P}_1
\setminus\ol{V_j}$. The group $\Gamma\subset\Aut(\mbb{P}_1)$ generated by
$\gamma_1,\dotsc,\gamma_r$ is a free group of rank $r$ acting freely and
properly on the open subset $\mathcal{U}_\varGamma:=\Gamma\cdot
\mathcal{F}_\varGamma$ where
\begin{equation*}
\mathcal{F}_\varGamma:=\mbb{P}_1\setminus\bigcup_{j=1}^r(U_j\cup V_j).
\end{equation*}
Moreover, the quotient $\mathcal{U}_\varGamma/\Gamma$ is a compact Riemann 
surface of genus $r$. One can relax the notion of a classical Schottky group by
considering $2r$ pairwise disjoint open subsets of $\mbb{P}_1$ that are bounded
by arbitrary Jordan curves instead of circles. In this case Koebe showed that
every compact Riemann surface can be obtained as quotient of an open subset of
$\mbb{P}_1$ by a Schottky group. We refer the reader 
to~\cite[Chapter~1.2.5]{CNS} for an account on the history of Schottky groups.

In~\cite{No} Nori extended the construction of Schottky groups to higher
dimensions in order to obtain compact complex manifolds having free fundamental
group of any rank. Let us recall his construction. Let $z,w\in\mbb{C}^{n+1}$
and consider the smooth function on $\mbb{P}_{2n+1}$ given by 
$\varphi[z:w]=\frac{\norm{w}^2}{\norm{z}^2+\norm{w}^2}$. The fibers
$C_a=\varphi^{-1}(a)$ for $a=0,1$ are isomorphic to $\mbb{P}_n$. For
$0<\eps<\frac{1}{2}$ we have the open neighborhoods $U_{\eps}=\{\varphi<\eps\}$
and $V_{\eps}=\{\varphi>1-\eps\}$ of $C_0$ and $C_1$, respectively. For
$\lambda\in\mbb{C}^*$ define an automorphism of $\mbb{P}_{2n+1}$ by
$g_\lambda[z:w]:=[\lambda^{-1}z:\lambda w]$. A direct calculation shows that
$g_\lambda$ maps $U_{\eps}$ biholomorphically to $\mbb{P}_{2n+1}\setminus 
\ol{V}_{\eps}$ if $\abs{\lambda}^2=\frac{1-\eps}{\eps}>1$. Now let
$f_2,\dotsc,f_r$ be $r\geq2$ automorphisms such that $C_0,C_1, 
f_2(C_0),f_2(C_1),\dotsc,f_r(C_0),f_r(C_1)$ are pairwise disjoint and take 
$\eps>0$ sufficiently small such that $U_{\eps},V_{\eps},f_2(U_{\eps}),
f_2(V_{\eps}),\dotsc,f_r(U_{\eps})$, $f_r(V_{\eps})$ have pairwise disjoint
closures. The automorphisms $f_2,\dotsc,f_r$ exist since $\Aut(\mbb{P}_{2n+1})$ 
acts transitively on the set of disjoint pairs of linearly embedded 
$\mbb{P}_n$'s. Fix $\lambda\in\mbb{C}^*$ with 
$\abs{\lambda}^2=\frac{1-\eps}{\eps}$ and define $r$ automorphisms of 
$\mbb{P}_{2n+1}$ by $\gamma_1:=g_\lambda$ and $\gamma_j:=f_j\circ\gamma_1\circ 
f_j^{-1}$ for $2\leq j\leq r$. The group $\Gamma\subset\Aut(\mbb{P}_{2n+1})$ 
generated by $\gamma_1,\dotsc,\gamma_r$ is an example of a Schottky group acting 
on $\mbb{P}_{2n+1}$. As in the one-dimensional case, there is the analogously 
defined open subset $\mathcal{U}_\varGamma$ on which $\Gamma$ acts freely and 
properly such that the quotient $Q_\varGamma:=\mathcal{U}_\varGamma/\Gamma$ is a 
compact complex manifold. The quotient manifolds $Q_\varGamma$ obtained by 
Nori's construction were studied in a more general framework by L\'arusson 
in~\cite{La}. He showed that, under a technical assumption on the generators of 
the Schottky group $\Gamma$ which guarantees that the $4n$-dimensional Hausdorff 
measure of $\mbb{P}_{2n+1}\setminus\mathcal{U}_\varGamma$ is zero, the manifold 
$Q_\varGamma$ has Kodaira dimension $-\infty$, is rationally connected, and
is not Moishezon. For Schottky groups acting on $\mbb{P}_3$ he proved
furthermore that $Q_\varGamma$ has algebraic dimension zero. In~\cite{SV} Seade
and Verjovsky proved for arbitrary $n$ that $Q_\varGamma$ is diffeomorphic to a
smooth fiber bundle over $\mbb{P}_n$ with fiber the connected sum of $r-1$ 
copies of $S^1\times S^{2n+1}$ and, furthermore, they studied the deformation 
theory of $Q_\varGamma$. For related results in dimension $3$ we refer the 
reader to~\cite{K10} and the references therein.

So far the only known examples of Schottky transformation groups are discrete 
subgroups of the automorphism group of $\mbb{P}_{2n+1}$. Under the hypothesis 
that the $2$-dimensional Hausdorff measure of $\mbb{P}_2\setminus 
\mathcal{U}_\varGamma$ is zero, L\'arusson proved that there do not exist 
Schottky groups acting on $\mbb{P}_2$. In~\cite{Ca} Cano generalized this result 
to  $\mbb{P}_{2n}$.
  
This leads naturally to the main purpose of the present paper, namely the 
construction of Schottky group actions on homogeneous rational manifolds 
different from $\mbb{P}_{2n+1}$.

In order to state the results we have to introduce some terminology. A 
\emph{Schottky pair} in a connected compact complex manifold $X$ is a pair of 
disjoint connected compact complex submanifolds $C_0$ and $C_1$ such that there 
is a holomorphic $\mbb{C}^*$-action on $X$ that is free and proper on 
$X\setminus(C_0\cup C_1)$ and has fixed point set $X^{\mbb{C}^*}=C_0\cup C_1$. 
The first ingredient for the construction of new Schottky groups acting on 
homogeneous rational manifolds is the following observation.

\begin{Prop3.2}
Let $G$ be a connected semisimple complex Lie group, let $Q$ be a parabolic 
subgroup of $G$, and let $G_0$ be a non-compact real form of $G$. If the minimal 
$G_0$-orbit in the homogeneous rational manifold $X=G/Q$ is a real hypersurface, 
then $X$ admits a Schottky pair.
\end{Prop3.2}

Its proof is based on~\cite{Ah} and Matsuki duality. In fact, the Schottky
pairs $(C_0,C_1)$ in $X=G/Q$ given by Proposition~\ref{Prop:AssociatedSchottky}
are the compact orbits of $K=K_0^\mbb{C}$ where $K_0$ is a maximal compact 
subgroup of $G_0$.

This proposition strongly demands to classify all triplets $(G,G_0,Q)$ such that 
the minimal $G_0$-orbit in $X=G/Q$ is a hypersurface. Since we did not find this 
classification, which is of independent interest, in the literature, it is 
carried out in an appendix of this paper. As a consequence, the homogeneous 
rational manifolds admitting Schottky pairs coming from a minimal hypersurface 
orbit are $\mbb{P}_{2n+1}$, the Gra{\ss}mannians ${\rm{Gr}}_n(\mbb{C}^{2n})$, 
the quadrics $Q_{2n}$ and the Gra{\ss}mannians ${\rm{IGr}}_n(\mbb{C}^{2n+1})$ of 
subspaces of $\mbb{C}^{2n+1}$ that are isotropic with respect to a 
non-degenerate quadratic form on $\mbb{C}^{2n+1}$. We proceed to determine all 
the cases in which the Schottky pairs can be moved by automorphisms of $X$ in 
order to actually produce Schottky groups. Our main result is the following

\begin{thm4.2}
Let $G$ be a connected semisimple complex Lie group, let $Q$ be a parabolic 
subgroup of $G$, and let $G_0$ be a non-compact real form of $G$ whose minimal 
orbit is a hypersurface in $X=G/Q$. The Schottky pairs giving rise to Schottky 
group actions on $X$ of arbitrary rank $r$ are precisely the ones on 
$\mbb{P}_{2n+1}$, $Q_{4n+2}$ and ${\rm{IGr}}_n(\mbb{C}^{2n+1})$.
\end{thm4.2}

In addition, we construct Schottky groups acting on $Q_{2n+1}$ and on certain
singular subvarieties of $\mbb{P}_{2n+1}$ which are not directly related to
minimal hypersurface orbits.

Associated with a Schottky group $\Gamma$ acting on $X$ we have the quotient 
manifold $Q_\varGamma$. We prove that the compact complex manifold $Q_\varGamma$ 
is non-K\"ahler, rationally connected, and has Kodaira dimension $\kod 
Q_\varGamma=-\infty$, see Proposition~\ref{Prop:kodaira}. Furthermore, we give a 
criterion for the algebraic dimension $a(Q_\varGamma)$ to be zero 
(cf.~Theorem~\ref{Thm:algdim}) and construct examples of $Q_\varGamma$ 
having strictly positive algebraic dimension, see Examples~\ref{Ex:posalgdim1},
\ref{Ex:posalgdim2} and~\ref{Ex:Posalgdim3}. Their algebraic reduction leads to 
almost-homogeneous compact complex manifolds, namely equivariant 
compactifications of $H/\Gamma$ where $H$ is the Zariski closure of $\Gamma$ in 
$\Aut(X)$. These have been previously unknown, except for 
$H={\rm{SL}}(2,\mbb{C})$. More precisely, equivariant compactifications of
${\rm{SL}}(2,\mbb{C})/\Gamma$ for $\Gamma$ a discrete free loxodromic subgroup
of ${\rm{SL}}(2,\mbb{C})$ (cf.~Example~\ref{Ex:SchottkyCompactifications}) have
been studied in great detail by A.~Guillot in~\cite{G}. These examples show
that the statements of~\cite[Proposition~9.3.12]{CNS}
respectively~\cite[Proposition~3.5]{SV} as well as of~\cite[Theorem~9.3.17]{CNS}
respectively \cite[Theorem~3.10]{SV} cannot be true in general. 

We also determine the Picard group of $Q_\varGamma$
(cf.~Theorem~\ref{Thm:Picard}) and establish the dimension and smoothness of its
Kuranishi space of versal deformations (cf.~Theorem~\ref{Thm:deform}). We note
that several of these results are new even in the case $X=\mbb{P}_{2n+1}$.
Others have been obtained by L\'arusson as well as Seade and Verjovsky under
conditions on the Hausdorff dimension of $X\setminus\mathcal{U}_\varGamma$,
which allowed them to apply extension theorems for holomorphic and meromorphic
functions due to Shiffman and for cohomology classes due to Harvey. Replacing 
Shiffman's and Harvey's techniques by results of Andreotti-Grauert, Scheja and 
Merker-Porten, we are able to remove these assumptions on the Hausdorff 
dimension of $X\setminus\mathcal{U}_\varGamma$.

Let us outline the structure of the paper. In Section~2 we review the basic
facts about Schottky groups in a generality suitable for our purpose. 
Sections~3 and~4 contain the proofs of 
Proposition~\ref{Prop:AssociatedSchottky} and Theorem~\ref{MainTheorem}, 
respectively. In Section~5 we present the technical tools needed to determine 
various cohomology groups of the quotient manifolds $Q_\varGamma$. These are 
then applied in the final Section~6 in order to obtain analytic and geometric 
invariants of $Q_\varGamma$ as well as their deformation theory. The 
classification of the triplets $(G,G_0,Q)$ such that the minimal $G_0$-orbit in 
$X=G/Q$ is a hypersurface is carried out in the appendix.

\section{Complex Schottky groups}

In this section we define Schottky group actions on a connected compact complex 
manifold $X$ in a way that is suitable for the context of this paper.

\subsection{Schottky pairs}\label{Subs:SchottkyPairs}

Let $X$ be a connected compact complex manifold of complex dimension $d$. A
\emph{Schottky pair} in $X$ is given by a pair $(C_0,C_1)$ of disjoint 
connected compact complex submanifolds of $X$ and a holomorphic 
$\mbb{C}^*$-action on $X$ with fixed point set $X^{\mbb{C}^*}=C_0\cup C_1$ that 
is free and proper on $X\setminus(C_0\cup C_1)$. This $\mbb{C}^*$-action 
corresponds to a holomorphic homomorphism $\mbb{C}^*\to\Aut(X)$ denoted by 
$\lambda\mapsto g_\lambda$.

\begin{rem}
Note that the $\mbb{C}^*$-action in Nori's construction described in the
introduction is \emph{not} free. In fact, this action is not even effective and
we must pass to the free and proper action of the quotient group $\mbb{C}^*/
\{\pm1\}$. Under the identification $\mbb{C}^*/\{\pm1\}\cong\mbb{C}^*$ induced 
by $\lambda\mapsto\lambda^2$ the formula for $g_\lambda$ given on page~1 
becomes $g_\lambda[z:w]=[z:\lambda w]$. A direct calculation shows that the 
function $\varphi$ introduced on page~1 satisfies
\begin{equation*}
\varphi[z:\lambda 
w]=\frac{\abs{\lambda}^2\varphi[z:w]}{1+\bigl(\abs{\lambda}^2-1\bigr)\varphi[z: 
w]}.
\end{equation*}
In other words, the map $\varphi\colon\mbb{P}_{2n+1}\to[0,1]$ is 
$S^1$-invariant and $\mbb{R}^{>0}$-equivariant, where $\mbb{R}^{>0}$ acts on 
$[0,1]$ by $t\cdot s:=\frac{t^2s}{1+(t^2-1)s}$.
\end{rem}

In the following remark we will see that it is always possible to define a 
function $\varphi$ having these properties.

\begin{rem}\label{Rem:existenceofvarphi}
Since the $\mbb{C}^*$-action on $\Omega:=X\setminus(C_0\cup C_1)$ is free and
proper, we get the trivial smooth principal $\mbb{R}^{>0}$-bundle 
$\Omega/S^1\to\Omega/\mbb{C}^*$, i.e., differentiably one has 
$\Omega/S^1\simeq(\Omega/\mbb{C}^*)\times\mbb{R}^{>0}$ where we identify 
$\mbb{C}^*/S^1$ with $\mbb{R}^{>0}$ via $\lambda\mapsto\abs{\lambda}$.
Therefore we can define an $S^1$-invariant smooth auxiliary function 
$\varphi\colon\Omega\to(0,1)$ as the composition of the projection onto the
second factor with the bijection $\mbb{R}^{>0}\to(0,1)$,
$s\mapsto\frac{s^2}{1+s^2}$. Note that this bijection is 
$\mbb{R}^{>0}$-equivariant. Since $X^{\mbb{C}^*}=C_0\cup C_1$, we may extend 
$\varphi$ continuously to a function $\varphi\colon X\to[0,1]$ such that
$C_0=\varphi^{-1}(0)$ and $C_1=\varphi^{-1}(1)$. One verifies directly
\begin{equation}\label{Eqn:equivariance}
\varphi\bigl(g_\lambda(x)\bigr)=\frac{\abs{\lambda}^2\varphi(x)}{
1+\bigl(\abs{\lambda}^2-1\bigr)\varphi(x)}=t\cdot\varphi(x)\text{ for 
$t:=\abs{\lambda}\in\mbb{R}^{>0}$}.
\end{equation}
In particular, $\varphi$ is a submersion on $\Omega$.
\end{rem}

\begin{rem}
Later on we will choose the function $\varphi\colon X\to[0,1]$ in a special
way in order to have properties analogous to Nori's construction mentioned in
the introduction. 
\end{rem}

For $0<\eps<\frac{1}{2}$ we set $U_{\eps}:=\{\varphi<\eps\}$. Note that the
family of these open sets forms a neighborhood basis of $C_0$. Similarly, the
open sets $V_{\eps}:=\{\varphi>1-\eps\}$ give a neighborhood basis of $C_1$.

\begin{lem}
Suppose that $X$ admits a Schottky pair. Then
\begin{enumerate}[(a)]
\item the $\mbb{C}^*$-action on $X$ maps fibers of $\varphi$ to fibers of
$\varphi$,
\item for every $x\in X\setminus(C_0\cup C_1)$ we have $\lim_{\lambda\to 0}
g_\lambda(x)\in C_0$ and $\lim_{\lambda\to\infty}g_\lambda(x)\in C_1$, and
\item if $0<\eps<1/2$ and $\abs{\lambda}=\frac{1-\eps}{\eps}$, then
$g_\lambda(U_{\eps})=U_{1-\eps}=X\setminus\ol{V_{\eps}}$.
\end{enumerate}
\end{lem}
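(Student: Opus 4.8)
The plan is to deduce all three statements from the equivariance formula~\eqref{Eqn:equivariance} together with basic properties of free and proper actions. First I would prove~(a): since $\varphi$ is $S^1$-invariant and satisfies $\varphi(g_\lambda(x)) = t\cdot\varphi(x)$ with $t = \abs{\lambda}$, two points $x,y$ in the same $\varphi$-fiber and a fixed $\lambda$ give $\varphi(g_\lambda(x)) = t\cdot\varphi(x) = t\cdot\varphi(y) = \varphi(g_\lambda(y))$, so $g_\lambda$ carries the fiber $\varphi^{-1}(s)$ into $\varphi^{-1}(t\cdot s)$; applying the same to $g_{\lambda^{-1}}$ shows this is onto, hence $g_\lambda\bigl(\varphi^{-1}(s)\bigr) = \varphi^{-1}(t\cdot s)$. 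Note also that on $C_0 = \varphi^{-1}(0)$ and $C_1 = \varphi^{-1}(1)$ the action formula gives $t\cdot 0 = 0$ and $t\cdot 1 = 1$, consistent with $X^{\mbb{C}^*} = C_0\cup C_1$.

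For~(b), fix $x\in\Omega$ and set $s = \varphi(x)\in(0,1)$. From the $\mbb{R}^{>0}$-action $t\cdot s = \frac{t^2 s}{1+(t^2-1)s}$ on $[0,1]$ one computes directly that $t\cdot s\to 0$ as $t\to 0$ and $t\cdot s\to 1$ as $t\to\infty$. Hence along any sequence $\lambda_k\to 0$, the values $\varphi(g_{\lambda_k}(x))\to 0$. Since $X$ is compact we may pass to a convergent subsequence $g_{\lambda_k}(x)\to p$, and continuity of $\varphi$ forces $\varphi(p) = 0$, i.e.\ $p\in C_0$; the same argument applied to $S^1$-orbits (using $S^1$-invariance of $\varphi$) or to the full one-parameter limit shows the limit exists and lies in $C_0$. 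Symmetrically $\lim_{\lambda\to\infty} g_\lambda(x)\in C_1$.

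For~(c), take $0<\eps<1/2$ and $\lambda$ with $\abs{\lambda} = t$, $t^2 = \frac{1-\eps}{\eps}$, so that $t^2\eps = 1-\eps$. Using part~(a), $g_\lambda$ maps the sublevel set $U_\eps = \varphi^{-1}\bigl([0,\eps)\bigr)$ onto $\varphi^{-1}\bigl(t\cdot[0,\eps)\bigr)$. Because $r\mapsto t\cdot r = \frac{t^2 r}{1+(t^2-1)r}$ is an increasing homeomorphism of $[0,1]$ fixing $0$ and $1$, it sends $[0,\eps)$ onto $[0,\,t\cdot\eps)$, and a short calculation gives $t\cdot\eps = \frac{t^2\eps}{1+(t^2-1)\eps} = \frac{1-\eps}{1+(t^2-1)\eps}$; plugging $t^2\eps = 1-\eps$ into the denominator yields $1 + (1-\eps) - \eps = 2-2\eps$, wait — more carefully, $(t^2-1)\eps = t^2\eps - \eps = (1-\eps) - \eps = 1-2\eps$, so $1 + (t^2-1)\eps = 2-2\eps$ and hence $t\cdot\eps = \frac{1-\eps}{2-2\eps} = \frac12$. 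That is not quite the claim, so I would instead solve $t\cdot\eps = 1-\eps$ directly for $t$: this reads $\frac{t^2\eps}{1+(t^2-1)\eps} = 1-\eps$, i.e.\ $t^2\eps = (1-\eps)(1+(t^2-1)\eps)$, which after expanding and simplifying gives $t^2\eps^2 = (1-\eps)^2$, hence $t = \frac{1-\eps}{\eps}$ — matching the hypothesis $\abs{\lambda} = \frac{1-\eps}{\eps}$ (not $\abs{\lambda}^2$ as in Nori's original normalization, consistent with the passage to $\mbb{C}^*/\{\pm1\}$ recorded in the first Remark). Thus $g_\lambda(U_\eps) = \varphi^{-1}\bigl([0,1-\eps)\bigr) = U_{1-\eps}$, and finally $U_{1-\eps} = \{\varphi < 1-\eps\} = \{\varphi\leq 1-\eps\}\setminus\{\varphi = 1-\eps\}$; since $\varphi$ is a submersion on $\Omega$ the level set $\{\varphi = 1-\eps\}$ equals $\partial V_\eps$ and $\{\varphi\geq 1-\eps\} = \ol{V_\eps}$, giving $U_{1-\eps} = X\setminus\ol{V_\eps}$.

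The only genuine subtlety — the ``main obstacle'' — is the existence of the limits in~(b) rather than merely of limit points: a priori compactness only yields convergent subsequences. This is handled by observing that the closure of the orbit $\mbb{R}^{>0}\cdot x$ in $X$ is a one-dimensional orbit together with (at most) its two endpoints in $X^{\mbb{C}^*}$, because the $\mbb{C}^*$-action is algebraic-like enough (holomorphic on a compact manifold) that orbit closures of the real one-parameter subgroup are well behaved; alternatively, one argues directly that if $g_{\lambda_k}(x)\to p$ and $g_{\mu_k}(x)\to p'$ with $\lambda_k,\mu_k\to 0$ and $p,p'\in C_0$, then continuity plus the group law $g_{\lambda_k} = g_{\lambda_k\mu_k^{-1}}\circ g_{\mu_k}$ forces $p = p'$ once one knows the action extends continuously over $C_0$ — which it does, since $g_\lambda$ fixes $C_0$ pointwise in the limit. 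I would phrase this cleanly using the $S^1$-invariant function $\varphi$ and the properness of the action on $\Omega$, which guarantees that orbit maps are proper onto their images and hence the boundary behavior is controlled entirely by $\varphi$.
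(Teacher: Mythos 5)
Your treatment of (a), of (c), and of the $\varphi$-computation in (b) is correct and is essentially the paper's own argument: the paper disposes of (a) and (b) in one line from the equivariance formula~\eqref{Eqn:equivariance}, and proves (c) by exactly the two inclusions that your monotone-homeomorphism argument encodes (your momentary slip $t^2=\frac{1-\eps}{\eps}$ is harmless, since you corrected it to $t=\abs{\lambda}=\frac{1-\eps}{\eps}$, which is the normalization used after passing to $\mbb{C}^*/\{\pm1\}$ in the first remark of Section~2; your identification $U_{1-\eps}=X\setminus\ol{V_{\eps}}$ via the submersion property of $\varphi$ on $\Omega$ is also fine).

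The genuine problem is the step you yourself single out as the main obstacle: the existence of the limits in (b). Neither of your two suggested arguments works. Holomorphy of the action on a compact manifold is not ``algebraic-like enough'': on the Hopf surface $X=(\mbb{C}^2\setminus\{0\})/\langle z\mapsto 2z\rangle$ with $g_\lambda[z_1:z_2]=[\lambda z_1:\lambda^{-1}z_2]$, the fixed point set consists of two disjoint elliptic curves $C_0=\{z_1=0\}$, $C_1=\{z_2=0\}$, and the action is free and proper on the complement (the function $\abs{z_1}/\abs{z_2}$ descends to $\Omega$ and scales by $\abs{\lambda}^2$), so this is a Schottky pair in the sense of the definition; yet for $x=[1:1]$ the points $g_t(x)$, $t\to0^+$, accumulate on a whole circle inside $C_0$, so the limit does not exist. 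The same example shows why your second argument is circular: the fact that each individual $g_\lambda$ fixes $C_0$ pointwise gives no equicontinuity of the family $g_{\lambda_k\mu_k^{-1}}$ near $C_0$, which is precisely what you would need to force $p=p'$. So genuine limit existence cannot be obtained in the stated generality; what the equivariance formula yields --- and what the paper's one-line proof actually establishes and later uses --- is that $\varphi\bigl(g_\lambda(x)\bigr)\to0$ (resp.\ $\to1$), i.e.\ the orbit accumulates only on $C_0$ (resp.\ $C_1$). In the homogeneous rational situations of Sections~3 and~4 the $\mbb{C}^*$-action is algebraic and the limits do exist, but to prove the lemma as you read it you would have to either interpret (b) in this weaker sense or invoke such an additional hypothesis; no soft argument of the kind you sketch can close the gap.
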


\begin{proof}
The first two statements follow directly from the equivariance
condition~\eqref{Eqn:equivariance}.

To show the third one, we calculate as follows. For $a\in\mbb{R}^{\geq0}$ we
have
\begin{equation*}
\frac{\frac{(1-\eps)^2}{\eps^2}a}{1+\left(\frac{(1-\eps)^2}{\eps^2}-1\right)a}=
\frac{(1-\eps)^2a}{\eps^2+(1-2\eps)a},
\end{equation*}
and this quantity is less than $1-\eps$ if and only if $a<\eps$. This shows
$g_\lambda(U_{\eps})\subset U_{1-\eps}$. In order to prove $g_\lambda^{-1}(
U_{1-\eps})\subset U_{\eps}$, let $a\in[0,1-\eps)$ and consider
\begin{equation*}
\frac{\frac{\eps^2}{(1-\eps)^2}a}{1+\left(\frac{\eps^2}{(1-\eps)^2}-1\right)a}=
\frac{\eps^2a}{(1-\eps)^2+(2\eps-1)a}<\frac{\eps^2(1-\eps)}{(1-\eps)^2+(2\eps
-1)(1-\eps)}=\eps,
\end{equation*}
as was to be shown.
\end{proof}

\begin{rem}
Suppose that $X$ admits a Schottky pair $(C_0,C_1)$. Often, there exists in
addition a holomorphic involution $s\colon X\to X$ such that
\begin{enumerate}[(1)]
\item $\varphi\circ s=1-\varphi$ and
\item $s\circ g_\lambda=g_{\lambda^{-1}}\circ s$ for all $\lambda\in
\mbb{C}^*$.
\end{enumerate}
In this case $s(C_0)=C_1$, hence $C_0$ and $C_1$ are biholomorphic. Moreover,
the hypersurface $H:=\{\varphi=1/2\}$ is $s$-stable. Since $s$ yields a
biholomorphism between $U_{1/2}$ and $V_{1/2}$, the hypersurface $H$ must be
Levi-symmetric.
\end{rem}

\subsection{Movable Schottky pairs and Schottky groups}

Let $X$ be a connected compact manifold with $\dim_\mbb{C}X=d$ that admits a
Schottky pair $(C_0,C_1)$. We say that this Schottky pair \emph{can be moved}
or is \emph{movable} if for every integer $r\geq2$ there exist automorphisms
$f_2,\dotsc,f_r$ of $X$ such that $C_0$, $C_1$, $f_2(C_0)$, $f_2(C_1),\dotsc,
f_r(C_0),f_r(C_1)$ are pairwise disjoint.

\begin{rem}
Although our definition of movable Schottky pairs is very general, in praxis we
need to consider compact complex manifolds admitting sufficiently many complex
submanifolds as well as global holomorphic automorphisms. Therefore, a
convenient class of manifolds on which one can expect to find Schottky group
actions is given by homogeneous rational manifolds. From Section~3 on, we will
restrict our attention thus to these.
\end{rem}

\begin{ex}
As shown in the introduction, Nori's construction produces movable Schottky
pairs in $X=\mbb{P}_{2n+1}$.
\end{ex}

\begin{ex}
While $X=\mbb{P}_2$ contains many Schottky pairs, see 
Proposition~\ref{Prop:AssociatedSchottky} and 
Theorem~\ref{Thm:HypersurfaceClassification}, none of them is  movable. To see 
this, suppose on the contrary that $(C_0,C_1)$ is a movable Schottky pair in 
$\mbb{P}_2$. Since any two curves in $\mbb{P}_2$ intersect, $C_0$ and $C_1$ must 
be points. Choose $\eps>0$ sufficiently small so that $U_{\eps}$ is contained in 
a ball. Consequently, $V_{\eps}$ contains a domain biholomorphic to
$\mbb{P}_2\setminus\mbb{B}_2$. But this is impossible since such domains cannot
form a neighborhood basis of a point. We refer the reader to~\cite{Ca} for a
related observation.
\end{ex}

Suppose that $(C_0,C_1)$ is movable and fix $f_1,\dotsc,f_r\in\Aut(X)$ as above
where $f_1:=\id_X$. For all $1\leq j\leq r$ choose $\eps_j\in(0,1/2)$ and
$\lambda_j\in\mbb{C}^*$ with $\abs{\lambda_j}=\frac{1-\eps_j}{\eps_j}>1$. Set
$\gamma_j:=f_j\circ g_{\lambda_j}\circ f_j^{-1}$ and $U_j:=f_j(U_{\eps_j})$ and
$V_j:=f_j(V_{\eps_j})$. We always choose $\eps_j$ sufficiently small such that
the open sets $U_1,\dotsc,U_r,V_1,\dotsc,V_r$  have pairwise disjoint closures.

The group $\Gamma\subset\Aut(X)$ generated by $\gamma_1,\dotsc,\gamma_r$ is
called a \emph{Schottky group} associated with the movable Schottky pair $(C_0,
C_1)$. For such a group $\Gamma$ we define
\begin{equation*}
\mathcal{F}_\varGamma:=X\setminus\bigcup_{j=1}^r(U_j\cup V_j)
\quad\text{and}\quad\mathcal{U}_\varGamma:=\bigcup_{\gamma\in\Gamma}\gamma( 
\mathcal{F}_\varGamma).
\end{equation*}
It is clear that $\mathcal{U}_\varGamma$ is a $\Gamma$-invariant
connected subset of $X$. Since $U_1,\dotsc,U_r,V_1,\dotsc,V_r$  have pairwise
disjoint closures, $\gamma(\mathcal{F}_\varGamma)$ is included in the open set
$X\setminus \mathcal{F}_\varGamma$ for every $\gamma\in\Gamma$. From this we
obtain $\Gamma\cdot\mathcal{F}_\varGamma=\Gamma\cdot
\mathring{\mathcal{F}_\varGamma}$, hence that $\mathcal{U}_\varGamma$ is open.

Moreover, if $X$ is simply-connected and if $\codim C_0,\codim C_1\geq2$, then 
$\mathcal{U}_\varGamma$ is likewise simply-connected. This follows from the fact 
that $\mathcal{U}_\varGamma$ is an increasing union of open subsets which are 
homotopy equivalent to $X\setminus C$ where $C$ is the disjoint union of $N$ 
copies of $C_0\cup C_1$, see Subsection~\ref{Subs:Picard}. If $\codim C_0,\codim 
C_1\geq2$, then each of these open sets is simply-connected, hence the same 
holds for $\mathcal{U}_\varGamma$.

The proof of~\cite[Proposition~9.2.8]{CNS} extends literally to give the 
following.

\begin{prop}\label{Prop:BasicProperties}
The Schottky group $\Gamma$ is the free group generated by $\gamma_1,\dotsc,
\gamma_r$ and acts freely and properly on $\mathcal{U}_\varGamma$. The
connected set $\mathcal{F}_\varGamma$ is a fundamental domain for the
$\Gamma$-action on $\mathcal{U}_\varGamma$. Consequently the quotient
$Q_\varGamma:=\mathcal{U}_\varGamma/\Gamma$ is a connected compact complex
manifold. If $X$ is simply-connected and if $\codim C_j\geq2$ for $j=0,1$, then 
the fundamental group of $Q_\varGamma$ is isomorphic to $\Gamma$.
\end{prop}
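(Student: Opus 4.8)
The plan is to follow the classical Schottky argument verbatim, as the paper suggests (it says the proof of \cite[Proposition~9.2.8]{CNS} extends literally), so I will organize the proof around the ``ping-pong'' lemma for free groups and a careful bookkeeping of the open sets $U_j$, $V_j$ relative to the fundamental domain $\mathcal{F}_\varGamma$.

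\begin{proof}[Proof proposal]
The key combinatorial input is Lemma~\ref{lem:pingpong-setup} above, namely that for $\abs{\lambda_j}=\frac{1-\eps_j}{\eps_j}$ one has $g_{\lambda_j}(U_{\eps_j})=X\setminus\ol{V_{\eps_j}}$, and symmetrically $g_{\lambda_j}^{-1}(V_{\eps_j})=X\setminus\ol{U_{\eps_j}}$. Conjugating by $f_j$ and writing $U_j=f_j(U_{\eps_j})$, $V_j=f_j(V_{\eps_j})$, this says precisely that $\gamma_j$ maps $X\setminus\ol{U_j}$ into $V_j$ and $\gamma_j^{-1}$ maps $X\setminus\ol{V_j}$ into $U_j$. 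Since the $2r$ closures $\ol{U_1},\dots,\ol{U_r},\ol{V_1},\dots,\ol{V_r}$ are pairwise disjoint and $\mathcal{F}_\varGamma=X\setminus\bigcup_j(U_j\cup V_j)$ is contained in each complement $X\setminus\ol{U_j}$ and $X\setminus\ol{V_j}$, the first step is to verify the ping-pong table: a reduced word $w=\gamma_{i_1}^{\pm1}\cdots\gamma_{i_k}^{\pm1}$ of length $k\geq1$ carries $\mathcal{F}_\varGamma$ (indeed, carries all of $X\setminus\ol{U_{i_1}}$ or $X\setminus\ol{V_{i_1}}$, depending on the sign of the leading letter) into the single set $V_{i_1}$ or $U_{i_1}$. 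This is proved by induction on $k$: the innermost letter sends $\mathcal{F}_\varGamma$ into some $U_{i_k}$ or $V_{i_k}$, which by disjointness lies in $X\setminus\ol{U_{i_{k-1}}}$ (or $X\setminus\ol{V_{i_{k-1}}}$) provided $i_{k-1}\neq i_k$ or the signs do not cancel — exactly the reducedness hypothesis — so the next letter applies, and so on. In particular $w(\mathcal{F}_\varGamma)\subset X\setminus\mathcal{F}_\varGamma$, so $w$ has no fixed point in $\mathcal{F}_\varGamma$; since $\mathcal{U}_\varGamma=\Gamma\cdot\mathcal{F}_\varGamma$, every point of $\mathcal{U}_\varGamma$ is $\Gamma$-conjugate into $\mathcal{F}_\varGamma$, and it follows that $w\ne\id$ on $\mathcal{U}_\varGamma$, hence $\Gamma$ is free of rank $r$ and acts without nontrivial stabilizers, i.e.\ freely.

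Next I treat properness of the action and the fundamental-domain statement. Here the point is that $\mathring{\mathcal{F}_\varGamma}$ is an open neighborhood of each of its points whose $\Gamma$-translates are ``almost disjoint'': for $\gamma\neq\id$, $\gamma(\mathcal{F}_\varGamma)$ meets $\mathcal{F}_\varGamma$ only possibly along the boundary $\bigcup_j\partial U_j\cup\partial V_j$, because by the ping-pong estimate $\gamma(\mathcal{F}_\varGamma)$ lies in some $U_j$ or $V_j$ while $\mathcal{F}_\varGamma$ is disjoint from the corresponding open set. Using that $X$ is compact and that the $\ol{U_j},\ol{V_j}$ are a finite disjoint family, one shows that for any compact $L\subset\mathcal{U}_\varGamma$ only finitely many $\gamma\in\Gamma$ satisfy $\gamma(L)\cap L\neq\emptyset$ — this is the discreteness/properness statement — by observing that once a word is long enough its image is trapped deep inside a nested sequence of the $U$'s and $V$'s, shrinking toward the limit set $X\setminus\mathcal{U}_\varGamma$, hence eventually disjoint from $L$. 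That $\mathcal{F}_\varGamma$ is a fundamental domain then amounts to: every $\Gamma$-orbit meets $\mathcal{F}_\varGamma$ (true by definition of $\mathcal{U}_\varGamma$) and meets it in a unique point up to boundary identifications (the almost-disjointness above). Consequently $Q_\varGamma=\mathcal{U}_\varGamma/\Gamma$ is Hausdorff, and since $\Gamma$ acts freely, properly, and holomorphically, it is a complex manifold; compactness follows because the continuous image of the compact set $\ol{\mathcal{F}_\varGamma}$ is all of $Q_\varGamma$, and connectedness because $\mathcal{F}_\varGamma$ is connected. (Here one needs $\mathcal{F}_\varGamma$ connected; this holds because removing from the connected manifold $X$ the finitely many sets $U_j\cup V_j$, each a connected neighborhood of a positive-codimension — or at worst of a point — submanifold, keeps $X$ connected; in the codimension-one point case in $\mbb{P}_1$ this is classical.)

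Finally, for the fundamental-group assertion, assume $X$ simply connected and $\codim C_j\ge2$. As recalled in the paragraph preceding the proposition, $\mathcal{U}_\varGamma$ is then simply connected: it is an increasing union of open sets each homotopy equivalent to $X$ with $N$ disjoint copies of $C_0\cup C_1$ removed, and removing a submanifold of real codimension $\ge4$ from a simply connected manifold leaves it simply connected. Since $\Gamma$ acts freely and properly discontinuously on the simply connected space $\mathcal{U}_\varGamma$, the quotient map $\mathcal{U}_\varGamma\to Q_\varGamma$ is a universal covering, whence $\pi_1(Q_\varGamma)\cong\Gamma$. I expect the main obstacle to be the bookkeeping in the properness argument — making precise the claim that long reduced words push $\mathcal{F}_\varGamma$ into arbitrarily small nested regions, which in the general (non-circular, submanifold) setting replaces the elementary Euclidean estimate available for classical Schottky groups on $\mbb{P}_1$; but since the equivariance identity \eqref{Eqn:equivariance} already gives us a global ``distance-to-$C_0\cup C_1$'' function $\varphi$ and the sets $U_\eps,V_\eps$ form genuine neighborhood bases, the standard proof of \cite[Proposition~9.2.8]{CNS} does carry over with only cosmetic changes.
\end{proof}
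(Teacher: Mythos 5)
Your overall route is the one the paper itself intends: the authors give no argument beyond invoking the ping-pong proof of \cite[Proposition~9.2.8]{CNS}, and your write-up is that classical argument, so the architecture (ping-pong table, conjugation of stabilizers into $\mathcal{F}_\varGamma$, covering-space argument for $\pi_1$) is correct. There is, however, one step that is false as stated and that also contradicts your own second paragraph: it is not true that $w(\mathcal{F}_\varGamma)\subset X\setminus\mathcal{F}_\varGamma$ for every nontrivial reduced word $w$, nor that $w(\mathcal{F}_\varGamma)$ lands in the \emph{open} set $U_{i_1}$ or $V_{i_1}$. Since $\mathcal{F}_\varGamma$ is the complement of the open sets $U_j,V_j$, it contains all boundaries $\partial U_j,\partial V_j$, and the equivariance formula~\eqref{Eqn:equivariance} shows that $g_{\lambda_j}$ carries the level set $\{\varphi=\eps_j\}$ onto $\{\varphi=1-\eps_j\}$, i.e.\ $\gamma_j(\partial U_j)=\partial V_j\subset\mathcal{F}_\varGamma$. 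So for words of length one the image of $\mathcal{F}_\varGamma$ lies only in the closure $\ol{V_j}$ (resp.\ $\ol{U_j}$) and genuinely meets $\mathcal{F}_\varGamma$; these boundary identifications are exactly what makes $\mathcal{F}_\varGamma$ a fundamental domain rather than a strict set of orbit representatives. Consequently your deduction ``so $w$ has no fixed point in $\mathcal{F}_\varGamma$'' is unsupported for $w=\gamma_j^{\pm1}$ and needs the extra (easy) observation that $\gamma_j(\partial V_j)\subset V_j$, so $\gamma_j$ moves every point of $\ol{V_j}\cap\mathcal{F}_\varGamma$ off itself; freeness of the \emph{action} on all of $\mathcal{U}_\varGamma$, including the boundary orbits, depends on this. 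Likewise your parenthetical claim that $w$ carries all of $X\setminus\ol{U_{i_1}}$ (complement attached to the \emph{leading} letter) into $V_{i_1}$ is wrong for words of length $\geq2$: the complement you control is the one attached to the last-applied letter; e.g.\ $\gamma_1\gamma_2$ maps $U_2\subset X\setminus\ol{U_1}$ onto a set containing $X\setminus\ol{V_1}$, which is not inside $V_1$. Neither slip is fatal, but both should be repaired, since the first one is reused in your properness paragraph.

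On properness and the fundamental-domain statement you only give a sketch (``long words are trapped in nested regions shrinking to the limit set''), which is the heart of the matter. A clean repair consistent with your setup: the corrected ping-pong table gives $\{\delta\in\Gamma;\ \delta(\mathcal{F}_\varGamma)\cap\mathcal{F}_\varGamma\neq\emptyset\}\subset\{\id,\gamma_1^{\pm1},\dotsc,\gamma_r^{\pm1}\}$ (words of length $\geq2$ land in an open $U_i$ or $V_i$, hence miss $\mathcal{F}_\varGamma$; single letters meet it only along $\partial U_j\cup\partial V_j$), and every point of $\mathcal{U}_\varGamma$ has a neighborhood contained in at most two translates of $\mathcal{F}_\varGamma$ (interior points in one, points of $\gamma(\partial U_j)$ in $\gamma(\mathcal{F}_\varGamma)\cup\gamma\gamma_j^{-1}(\mathcal{F}_\varGamma)$); covering a compact $L\subset\mathcal{U}_\varGamma$ by finitely many such neighborhoods then shows that $\{\gamma\in\Gamma;\ \gamma(L)\cap L\neq\emptyset\}$ is finite, which for a discrete group is properness. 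Given that the paper itself merely defers to \cite{CNS}, leaving this bookkeeping to the reference is acceptable, but as written your argument both relies on the false disjointness statement and never isolates the finite set of elements whose translates of $\mathcal{F}_\varGamma$ can touch $\mathcal{F}_\varGamma$. The remaining parts (freeness of the abstract group from $w(\mathring{\mathcal{F}_\varGamma})\cap\mathring{\mathcal{F}_\varGamma}=\emptyset$, compactness and connectedness of $Q_\varGamma$ from $\mathcal{F}_\varGamma$, and $\pi_1(Q_\varGamma)\simeq\Gamma$ via the simple connectivity of $\mathcal{U}_\varGamma$ when $\codim C_j\geq2$) are fine and match the paper.
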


\begin{rem}
If we take $r=1$, then we have $\Gamma\simeq\mbb{Z}$ and $\mathcal{U}_\varGamma
=X\setminus(C_0\cup C_1)=\Omega$. In this case $Q_\varGamma$ is a holomorphic
fiber bundle over $\Omega/\mbb{C}^*$ with an elliptic curve as fiber.
\end{rem}

\section{Schottky pairs associated with compact hypersurface orbits}

In this section we prove Proposition~\ref{Prop:AssociatedSchottky} which
provides a general method to construct Schottky pairs in homogeneous rational
manifolds.

\subsection{Nori's construction}\label{Subsection:Nori}

We start by reformulating Nori's construction of Schottky groups in
group-theoretical terms. Recall that on $X=\mbb{P}_{2n+1}$ we have the function
\begin{equation*}
\varphi[z:w]:=\frac{\norm{w}^2}{\norm{z}^2+\norm{w}^2},
\end{equation*}
where $(z,w)\in(\mbb{C}^{n+1}\times\mbb{C}^{n+1})\setminus\{0\}$. The
hypersurface $H=\{\varphi=1/2\}=\bigl\{\norm{z}^2-\norm{w}^2=0\bigr\}$ is an
orbit of the real form $G_0:={\rm{SU}}(n+1,n+1)$ of $G={\rm{SL}}(2n+2,
\mbb{C})$. Note that $X=\{\varphi<1/2\}\cup H\cup\{\varphi>1/2\}$ gives the
decomposition of $X$ into $G_0$-orbits. Let $K$ be the complexification of the
maximal compact subgroup
\begin{equation*}
K_0:=\left\{
\begin{pmatrix}
A&0\\0&B
\end{pmatrix};\ A,B\in{\rm{U}}(n+1), \det(A)\det(B)=1\right\}\simeq
{\rm{S}}\bigl({\rm{U}}(n+1)\times{\rm{U}}(n+1)\bigl)
\end{equation*}
of $G_0$. One sees directly that $K$ has likewise precisely three orbits in
$X$, namely the compact orbits $C_0=\bigl\{[z:0]\bigr\}$ and
$C_1=\bigl\{[0:w]\bigr\}$, and the open orbit $\Omega=K\cdot
H=X\setminus(C_0\cup C_1)$. Moreover, for every $\lambda\in \mbb{C}^*$ the
automorphism $g_\lambda\in\Aut(X)$ belongs to the center of $K$.

\begin{rem}
Note that the symplectic group $\wt{G}:={\rm{Sp}}(n+1,\mbb{C})\subset G$ acts
transitively on $X=\mbb{P}_{2n+1}$, too, see~\cite{On} or~\cite{St}. Moreover,
the automorphism $g_\lambda$ is contained in $\wt{G}$ for any $\lambda\in
\mbb{C}^*$. Since $\wt{G}$ has a Zariski-open orbit in ${\rm{Gr}}_{n+1}
(\mbb{C}^{2n+1})$, we can construct Schottky groups acting on $X$ also inside
the symplectic group. In particular, the Zariski closure of such a Schottky
group is contained in ${\rm{Sp}}(n+1,\mbb{C})$.

If $n+1=2k$, the hypersurface $H$ is an orbit of the real form
$\wt{G}_0:={\rm{Sp}}(k,k)$ of ${\rm{Sp}}(n+1,\mbb{C})$ and
$C_0,C_1\simeq\mbb{P}_{2k-1}$ are orbits of $\wt{K}:=
{\rm{Sp}}(k,\mbb{C})\times{\rm{Sp}}(k,\mbb{C})$. In other words, in this case
we have again a real form having a compact hypersurface orbit.
\end{rem}

This observation leads to a systematic way to construct Schottky group actions 
on homogeneous rational manifolds described in the next subsection.

\subsection{Schottky pairs associated with compact hypersurface orbits}

The following pro\-position allows to associate a Schottky pair with a compact 
hypersurface orbit of a real form $G_0$ of $G$ acting on a homogeneous rational 
manifold $X=G/Q$. Its proof is based on Matsuki duality and on Akhiezer's 
paper~\cite{Ah}.

\begin{prop}\label{Prop:AssociatedSchottky}
Let $G$ be a connected complex semisimple group, let $Q$ be a parabolic subgroup 
of $G$, and let $X=G/Q$ be the corresponding homogeneous rational manifold. Let 
$G_0$ be a non-compact real form of $G$ such that the minimal $G_0$-orbit in $X$ 
is a real hypersurface. Then $X$ admits a Schottky pair.
\end{prop}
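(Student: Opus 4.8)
The plan is to produce the desired holomorphic $\mbb{C}^*$-action from a one-parameter subgroup of the center of $K = K_0^{\mbb{C}}$, where $K_0$ is a maximal compact subgroup of $G_0$, and to identify its fixed-point set with a pair of compact $K$-orbits, using Matsuki duality to control the orbit structure. First I would recall the Matsuki correspondence for the real form $G_0$ acting on $X = G/Q$: the $G_0$-orbits and the $K$-orbits are in order-reversing bijection, with an orbit $\mathcal{O}_{G_0}$ corresponding to $\mathcal{O}_K$ precisely when $\mathcal{O}_{G_0} \cap \mathcal{O}_K$ is a single $K_0$-orbit. Since the minimal $G_0$-orbit is a real hypersurface, it is open in its closure and actually \emph{open} in $X$ (a codimension-one orbit in a connected manifold whose complement has codimension $\geq 2$... more carefully: by Akhiezer's analysis in~\cite{Ah}, when the minimal $G_0$-orbit has codimension one, $X$ decomposes into exactly three $G_0$-orbits, two open ones and the hypersurface between them, OR one open orbit and the hypersurface — but the hypersurface being \emph{minimal} forces it to be the unique closed $G_0$-orbit, and then there are two open $G_0$-orbits on either side). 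Dually, $K$ then has exactly three orbits in $X$: two compact orbits $C_0, C_1$ (dual to the two open $G_0$-orbits) and one open orbit $\Omega$ (dual to the closed hypersurface orbit), with $\Omega = X \setminus (C_0 \cup C_1)$.

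Next I would produce the $\mbb{C}^*$-action. The key input from~\cite{Ah} is that codimension-one orbits force the relevant symmetric space / real form to have real rank one in the appropriate sense, which yields a nontrivial split torus; concretely, the center $Z(K)$ has positive dimension, and one extracts a holomorphic homomorphism $\mbb{C}^* \to Z(K) \subset \Aut(X)$, $\lambda \mapsto g_\lambda$. I would then verify the three required properties of a Schottky pair. (i) \emph{Fixed-point set.} Since $g_\lambda$ lies in $Z(K)$, it commutes with $K$, so $\Fix(g_\lambda)$ is $K$-stable, hence a union of $K$-orbits among $\{C_0, C_1, \Omega\}$; the action is nontrivial on $\Omega$ (otherwise $g_\lambda$ would be trivial on an open set, hence on $X$), so $X^{\mbb{C}^*} = C_0 \cup C_1$. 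One checks the $\mbb{C}^*$-action does have $C_0$ and $C_1$ as fixed loci by looking at the torus action on the open $K$-orbit $\Omega \cong K/K\cap Q^{\,g}$ for a generic point and analyzing limits $\lambda \to 0, \infty$ as in Akhiezer's normal-form description. (ii) \emph{Freeness on $\Omega$.} The isotropy of $\mbb{C}^*$ at a point of $\Omega$ is a closed subgroup of $\mbb{C}^*$, so either finite or all of $\mbb{C}^*$; the latter is excluded by (i), and finiteness is ruled out (or reduced to finite by passing to a quotient torus, exactly as in Nori's case) by the rank-one structure: the $S^1$ inside acts freely because $\Omega/\mbb{C}^*$ is the closed $G_0$-hypersurface orbit which is a genuine real hypersurface, forcing the fibers to be full $\mbb{C}^*$-orbits. (iii) \emph{Properness on $\Omega$.} This follows since $\Omega \to \Omega/\mbb{C}^*$ is a fiber bundle with the $\mbb{C}^*$-orbits as fibers, the base being compact (it is the closed hypersurface $G_0$-orbit), and the orbits being closed in $\Omega$ — equivalently, $\overline{g_\lambda(x) : \lambda \in \mbb{C}^*}$ meets $X \setminus \Omega$ only in $C_0 \cup C_1$, which gives closedness of orbits in $\Omega$ and hence properness by standard arguments.

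I expect the main obstacle to be property (ii), the \emph{freeness} (and effectiveness) of the $\mbb{C}^*$-action on $\Omega$: as Nori's example already shows, the naive $\mbb{C}^*$ need not act freely or even effectively, so one must carefully identify the correct quotient torus — the connected one-parameter subgroup of $Z(K)$ acting effectively — and then prove that this torus, restricted to its maximal compact $S^1$, is free on $\Omega$. This is precisely where Akhiezer's structure theory for real forms with a codimension-one orbit enters essentially: it pins down the split-torus part to be one-dimensional and describes the orbit geometry finely enough to read off the isotropy groups. A secondary, more bookkeeping point is making Matsuki duality yield \emph{exactly} the three-orbit picture (and in particular that the two compact $K$-orbits are disjoint complex submanifolds), which I would handle by combining the order-reversing bijection with the dimension count: the hypersurface $G_0$-orbit being minimal (closed) forces its dual $K$-orbit to be maximal (open), and the open $G_0$-orbits dualize to compact $K$-orbits, which are automatically complex submanifolds since $K$ is complex. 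Once these are in place, taking $(C_0, C_1)$ together with $\lambda \mapsto g_\lambda$ furnishes the Schottky pair.
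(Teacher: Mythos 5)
Your overall skeleton (Matsuki duality for the three-orbit picture, then a $\mbb{C}^*$-action commuting with $K$, free and proper on the open $K$-orbit and fixing the two compact $K$-orbits) matches the paper's, and you correctly identify Akhiezer's paper as the essential input. But the step where you actually produce the $\mbb{C}^*$-action is wrong: you claim that the hypersurface condition forces $Z(K)$ to be positive-dimensional and take a one-parameter subgroup there. This fails in several of the cases covered by the proposition. For $G_0={\rm{Sp}}(p,q)$ one has $K={\rm{Sp}}(p,\mbb{C})\times{\rm{Sp}}(q,\mbb{C})$, whose center is finite; the relevant $g_\lambda$ lies in the centralizer of $K$ in $G$, not in $K$. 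Worse, for $G_0={\rm{SO}}(1,2n)$ acting on ${\rm{IGr}}_n(\mbb{C}^{2n+1})$ one has $K={\rm{SO}}(2n,\mbb{C})$ with finite center \emph{and} $\mathcal{Z}_{\lie{g}}(\lie{k})=0$, so there is no such torus anywhere in $G$: the required $\mbb{C}^*$ only exists in the full automorphism group $\Aut(X)\simeq{\rm{SO}}(2n+2,\mbb{C})\supsetneq G$ (this is exactly what Section 4.4 of the paper makes explicit). Your auxiliary claim that a codimension-one minimal orbit forces real rank one is also false ($G_0={\rm{SU}}(p,q)$ on $\mbb{P}_{p+q-1}$ has real rank $p$), and your freeness/properness arguments rest on the incorrect identification of $\Omega/\mbb{C}^*$ with the closed $G_0$-orbit: that orbit is a real hypersurface of dimension $2d-1$, whereas the quotient has dimension $2d-2$ and is in fact a flag manifold of $K$; closedness of orbits alone does not give properness either.

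The paper's proof avoids all of this by quoting the precise structural statements of Akhiezer: by his Theorem 1, the isotropy group of the open $K$-orbit $\Omega\simeq K/K_x$ is $K_x=P_\chi$, the kernel of a non-trivial character $\chi$ of a parabolic subgroup $P\subset K$, so that $\Omega\to K/P$ is a holomorphic principal $\mbb{C}^*$-bundle; freeness and properness of the $\mbb{C}^*$-action on $\Omega$ are then automatic from the principal-bundle structure, with no case analysis and no appeal to $Z(K)$. His Theorem 2 then says this $\mbb{C}^*$-action extends holomorphically to all of $X$ with the two compact $K$-orbits $C_0$ and $C_1$ fixed pointwise, which is precisely the definition of a Schottky pair. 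So to repair your argument you should replace the ``torus in $Z(K)$'' construction, and your ad hoc verifications of (i)--(iii), by these two theorems; the Matsuki-duality part of your proposal (three $G_0$-orbits by Jordan--Brouwer separation in the simply connected $X$, dually two compact and one open $K$-orbit) is essentially the paper's preliminary lemma and is fine.
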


Before giving the proof let us review the basic ideas of Matsuki duality. Let 
$G_0$ be a non-compact real form of $G$ and let $K_0$ be a maximal compact 
subgroup of $G_0$. We assume that the groups $G_0$ and $K_0$ are connected. We 
consider the complexification $K:=K_0^\mbb{C}$ as a subgroup of $G$ and call it 
a {\it Matsuki partner} of $G_0$. Matsuki duality provides a bijection between 
the $G_0$-orbits and the $K$-orbits in $X=G/Q$, under which open $G_0$-orbits 
correspond to compact $K$-orbits and compact $G_0$-orbits to open $K$-orbits, 
see e.g.~\cite{BreLo}. This implies in particular that $G_0$ has exactly one 
compact orbit in $X=G/Q$. This compact orbit has minimal dimension among all 
$G_0$-orbits and will be called the \emph{minimal} $G_0$-orbit in $X$.

Suppose from now on that the compact $G_0$-orbit in $X=G/Q$ is a hypersurface.
In the appendix we will determine all triplets $(G,Q,G_0)$ for which this is
the case. First we shall deduce some information about the orbits of $G_0$ and
$K$ in $X$.

\begin{lem}
Suppose that the minimal $G_0$-orbit in $X=G/Q$ is a hypersurface. Then $X$
contains exactly three $G_0$-orbits, the minimal one and two open ones.
Moreover, the generic $K_0$-orbit in $X$ is a hypersurface as well.
\end{lem}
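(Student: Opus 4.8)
The plan is to combine three ingredients: the finiteness of $G_0$-orbits on a flag manifold together with the existence of a unique closed orbit, an elementary dimension count forced by the hypersurface hypothesis, and the fact that a homogeneous rational manifold has no odd homology.

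First I would record the structure of the $G_0$-orbits. Set $d:=\dim_\mbb{C}X$. The real form $G_0$ has only finitely many orbits on $X=G/Q$, each a connected locally closed real submanifold, and among them there is a unique closed one (see~\cite{BreLo} and the references therein). An orbit of minimal dimension is automatically closed, since its frontier is a union of orbits of strictly smaller dimension and hence empty; by uniqueness this minimal orbit is exactly the compact $G_0$-orbit $S$ of the statement. By hypothesis $\dim_\mbb{R}S=2d-1$. Every other $G_0$-orbit has strictly larger dimension than the minimal one, and $2d-1$ and $2d$ are consecutive integers, so each non-minimal orbit has real dimension $2d$ and is therefore open. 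Thus $X\setminus S$ is a disjoint union of open $G_0$-orbits, each being one of its connected components.

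It then remains to count these components. A homogeneous rational manifold admits a cell decomposition into Schubert cells, all of even real dimension, so its homology vanishes in odd degrees; in particular $H_{2d-1}(X;\mbb{Z}/2)=0$. By the standard criterion relating separation of a manifold by a hypersurface to its $\mbb{Z}/2$-fundamental class, the connected compact hypersurface $S$ is null-homologous modulo $2$ and therefore separates $X$, and being connected it separates $X$ into exactly two pieces. Hence there are precisely two open $G_0$-orbits $\Omega_+$ and $\Omega_-$, which together with $S$ yield the three asserted $G_0$-orbits.

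For the statement about $K_0$ I would first transport the orbit structure to $K=K_0^\mbb{C}$ via Matsuki duality: the three $G_0$-orbits correspond to three $K$-orbits, namely a unique open and hence dense orbit $\Omega_K$ dual to $S$, and two compact orbits $C_0,C_1$ dual to $\Omega_+,\Omega_-$. Each compact $K$-orbit is of the form $K/P_j$ with $P_j$ parabolic, so it is a flag manifold of $K$ on which the maximal compact subgroup $K_0$ already acts transitively; thus $C_0$ and $C_1$ are complex $K_0$-orbits of even real dimension. Since $\Omega_K$ is dense, a generic $K_0$-orbit lies in $\Omega_K$, and the assertion is that it has real codimension one, i.e.\ that $K_0$ acts on $X$ with cohomogeneity one, with $C_0$ and $C_1$ as the two singular orbits. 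This is the step where I expect the real difficulty: Matsuki duality only pins down the three distinguished $K_0$-orbits $C_0$, $C_1$ and $S\cap\Omega_K$ that arise as intersections of dual orbits, and says nothing about the generic ones. I would import the needed input from Akhiezer~\cite{Ah}, whose analysis translates the hypersurface property of the minimal $G_0$-orbit into the cohomogeneity-one property of the $K_0$-action. Granting this, a generic $K_0$-orbit is a hypersurface in the open set $\Omega_K$ and hence a hypersurface in $X$, which finishes the proof.
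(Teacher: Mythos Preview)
For the first claim your argument via the dimension count and the vanishing of odd homology is correct and essentially parallel to the paper's; the paper just invokes the Jordan--Brouwer separation theorem directly (using that $X=G/Q$ is simply connected), which is a little more economical but amounts to the same thing.

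For the second claim, however, you take an unnecessary detour and end up with a self-acknowledged gap. The paper's argument is a one-line observation that you overlook: the maximal compact subgroup $K_0$ already acts transitively on the compact $G_0$-orbit $S$ itself. Indeed, $S\cong G_0/P_0$ with $P_0$ a parabolic subgroup of $G_0$, and the Iwasawa decomposition $G_0=K_0A_0N_0$ together with $A_0N_0\subset P_0$ shows that $K_0$ is transitive on $G_0/P_0$. Since $S$ is a hypersurface, this directly exhibits a $K_0$-orbit of real codimension one; and as $K_0$ cannot act transitively on $X$ (it already has the proper orbit $S$), the generic $K_0$-orbit is a hypersurface. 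There is no need to pass through Matsuki duality or to invoke Akhiezer here; those ingredients only enter later, in the proof of Proposition~\ref{Prop:AssociatedSchottky}.
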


\begin{proof}
Since $X=G/Q$ is simply connected, the complement of the minimal $G_0$-orbit
has exactly two connected components by the Jordan-Brouwer separation theorem.
The first claim follows from the fact that $G_0$ must act transitively on these
connected components. For the second one, it is sufficient to note that $K_0$
acts transitively on the minimal $G_0$-orbit.
\end{proof}

Using Matsuki duality we see that the group $K$ has likewise exactly three
orbits in $X$: two compact ones which lie in the open $G_0$-orbits and one open
orbit that contains the compact $G_0$-orbit. We denote the two compact
$K$-orbits by $C_0$ and $C_1$.

\begin{proof}[Proof of Proposition~\ref{Prop:AssociatedSchottky}]
We only have to prove the existence of a holomorphic $\mbb{C}^*$-action on
$X=G/Q$ that verifies the definition of a Schottky pair. Let $\Omega\simeq
K/K_x$ be the open $K$-orbit in $X$. According to~\cite[Theorem~1]{Ah} its
isotropy group is of the form $K_x=P_\chi$ where $P\subset K$ is a parabolic 
subgroup and $P_\chi$ denotes the kernel of a non-trivial character $\chi\colon 
P\to\mbb{C}^*$ on $P$. In other words, the fibration $\Omega\simeq K/K_x\to K/P$ 
is a $\mbb{C}^*$-principal bundle. Hence, there is a free and proper holomorphic
$\mbb{C}^*$-action on $\Omega$.

It follows from~\cite[Theorem~2]{Ah} that this $\mbb{C}^*$-action extends to
all of $X$ in such a way that the two compact $K$-orbits $C_0$ and $C_1$ are
fixed pointwise.
\end{proof}

\section{Homogeneous rational manifolds admitting
movable Schottky pairs}\label{Section:MovableSchottky}

Let $X=G/Q$ be a homogeneous rational manifold where $G$ is a connected 
semisimple complex Lie group and let $G_0$ be a real form of $G$. In this
section we discuss in detail all the examples of compact hypersurface orbits of
$G_0$ that give rise to movable Schottky pairs. As shown in the appendix, the
only cases where the minimal $G_0$-orbit is a hypersurface in $X=G/Q$ are the
following, see~Theorem~\ref{Thm:HypersurfaceClassification}.

\begin{enumerate}[(1)]
\item $G_0={\rm{SU}}(p,q)$ acting on $X=\mbb{P}_{p+q-1}$;
\item $G_0={\rm{Sp}}(p,q)$ acting on $X=\mbb{P}_{2(p+q)-1}$;
\item $G_0={\rm{SU}}(1,n)$ acting on $X={\rm{Gr}}_k(\mbb{C}^{n+1})$;
\item $G_0={\rm{SO}}^*(2n)$ acting on $X=Q_{2n-2}$;
\item $G_0={\rm{SO}}(1,2n)$ acting on $X={\rm{IGr}}_n(\mbb{C}^{2n+1})$;
\item $G_0={\rm{SO}}(2,2n)$ acting on $X={\rm{IGr}}_{n+1}(\mbb{C}^{2n+2})^0$.
\end{enumerate}

Here ${\rm{IGr}}_k(\mbb{C}^n)$ is the set of all $k$-dimensional subspaces of 
$\mbb{C}^n$ which are isotropic with respect to a non-degenerate symmetric 
bilinear form. The homogeneous rational manifold ${\rm{IGr}}_k(\mbb{C}^{2k})$ 
has two isomorphic connected components, see~\cite[Proposition,p.~735]{GH}.
We denote by ${\rm{IGr}}_k(\mbb{C}^{2k})^0$ one of these components.

\begin{rem}
It is well-known that there exists an ${\rm{SO}}(2n+1,\mbb{C})$-equivariant 
biholomorphism between ${\rm{IGr}}_n(\mbb{C}^{2n+1})$ and ${\rm{IGr}}_{n+1}
(\mbb{C}^{2n+2})^0$. This corresponds to the fact that the automorphism group
of ${\rm{IGr}}_n(\mbb{C}^{2n+1})$ is isomorphic to ${\rm{SO}}(2n+2,\mbb{C})$,
see~\cite{On} and~\cite{St}.
\end{rem}

\begin{assumption}\label{Rem:goodneighborhood}
In all cases in which we obtain a Schottky group action associated with a 
compact hypersurface orbit as described in 
Proposition~\ref{Prop:AssociatedSchottky}, we may and will choose the function
$\varphi$ introduced in Remark~\ref{Rem:existenceofvarphi} to be 
$K_0$-invariant, as it was done in Subsection~\ref{Subsection:Nori} for 
$X=\mbb{P}_{2n+1}$. This important assumption will assure the existence of 
subvarieties of $\mathcal{U}_\varGamma$, and therefore of $Q_{\varGamma}$, 
which are biholomorphic to the Schottky pair varieties $C_0$ and $C_1$. This 
fact can be easily verified in each of the examples discussed in this section 
and will be crucial for several arguments in the proofs of complex analytic 
and geometric properties of the quotient varieties $Q_{\varGamma}$.
\end{assumption}

The main result of this section is

\begin{thm}\label{MainTheorem}
Let $G$ be a connected semisimple complex Lie group, let $Q$ be a parabolic 
subgroup of $G$, and let $G_0$ be a non-compact real form of $G$ whose minimal 
orbit is a hypersurface in $X=G/Q$. The Schottky pairs giving rise to Schottky 
group actions on $X$ of arbitrary rank $r$ are precisely the ones on the 
odd-dimensional projective space $\mbb{P}_{2n+1}$, the quadric $Q_{4n+2}$ and  
the isotropic Gra{\ss}mannian $Z_{n}:={\rm{IGr}}_n(\mbb{C}^{2n+1})$. 
Furthermore, if $(C_{0},C_{1})$ denotes a Schottky pair, then $C_0\simeq C_1$ is 
a linear $\mbb{P}_{n}$ in the case $X=\mbb{P}_{2n+1}$, a linear $\mbb{P}_{2n+1}$ 
in the case $X=Q_{4n+2}$ and an equivariantly embedded copy of 
$Z_{n-1}={\rm{IGr}}_{n-1}(\mbb{C}^{2n-1})$ in the case of $X=Z_{n}$. In each of 
these three cases the automorphism group of $X$ acts transitively on the set of 
Schottky pairs.
\end{thm}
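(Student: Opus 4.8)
The plan is to go through the six cases of Theorem~\ref{Thm:HypersurfaceClassification} one at a time and, for each, decide whether the associated Schottky pair $(C_0,C_1)$ can be moved, i.e.\ whether for every $r\geq 2$ there exist $f_2,\dots,f_r\in\Aut(X)$ with $C_0,C_1,f_2(C_0),f_2(C_1),\dots,f_r(C_0),f_r(C_1)$ pairwise disjoint. The obstruction to movability is always of cohomological/intersection-theoretic nature: if $C_0$ and $C_1$ are positive-dimensional and their homology classes $[C_0],[C_1]$ pair non-trivially under the intersection product (or, more precisely, if two generic translates must meet because of dimension and positivity), then no disjoint translate can exist and the pair is not movable. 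So the first step is to identify, in each case, the $K$-orbit description of $C_0$ and $C_1$ coming from Proposition~\ref{Prop:AssociatedSchottky}, compute their dimensions, and compare $\dim C_0+\dim C_1$ with $\dim X$ together with the relevant positivity.

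For the three cases that do give movable pairs I would argue positively. In $X=\mbb{P}_{2n+1}$ this is Nori's construction, already recalled in the introduction: $C_0,C_1$ are complementary linear $\mbb{P}_n$'s and $\Aut(\mbb{P}_{2n+1})=\mathrm{PGL}(2n+2,\mbb{C})$ acts transitively on disjoint pairs of such, so any number of pairwise disjoint copies exist. For $X=Q_{4n+2}$ (the case $G_0=\mathrm{SO}^*(2n)$ with the indices matched so that $Q_{2n-2}=Q_{4n+2}$, i.e.\ the even quadric of dimension $4n+2$) the pair consists of two disjoint linear subspaces $\mbb{P}_{2n+1}\subset Q_{4n+2}$ of maximal dimension lying in opposite rulings; since $\dim C_0+\dim C_1=4n+2=\dim X$ but two generic such maximal linear subspaces from the \emph{same} family are disjoint, and $\Aut(Q_{4n+2})=\mathrm{PO}(4n+4,\mbb{C})$ acts transitively on tuples of pairwise disjoint maximal linear subspaces, one obtains movable pairs of arbitrary rank. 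For $X=Z_n=\mathrm{IGr}_n(\mbb{C}^{2n+1})$ the pair consists of two equivariantly embedded copies of $Z_{n-1}=\mathrm{IGr}_{n-1}(\mbb{C}^{2n-1})$; here I would use the identification $\Aut(Z_n)\cong\mathrm{SO}(2n+2,\mbb{C})$ and the explicit geometry of isotropic flags — a copy of $Z_{n-1}$ inside $Z_n$ is cut out by fixing an isotropic line $\ell$ and a containing point of the dual quadric, and two such copies are disjoint as soon as the corresponding isotropic data are in general position — to produce $r$ pairwise disjoint copies and to check transitivity of the orthogonal group on such configurations.

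The negative cases — $G_0=\mathrm{Sp}(p,q)$ on $\mbb{P}_{2(p+q)-1}$ with $p,q\ge 1$ not both making it reduce to Nori's case, $G_0=\mathrm{SU}(1,n)$ on $\mathrm{Gr}_k(\mbb{C}^{n+1})$, $G_0=\mathrm{SO}^*(2n)$ on $Q_{2n-2}$ in the remaining index range, $G_0=\mathrm{SO}(2,2n)$ on $\mathrm{IGr}_{n+1}(\mbb{C}^{2n+2})^0$, and $G_0=\mathrm{SU}(p,q)$ on $\mbb{P}_{p+q-1}$ when $p\ne q$ — are where the work lies. For each I would exhibit the $K$-orbit structure: e.g.\ on $\mathrm{Gr}_k(\mbb{C}^{n+1})$ with $K=\mathrm{S}(\mathrm{GL}(1,\mbb{C})\times\mathrm{GL}(n,\mbb{C}))$ the two compact $K$-orbits are the sub-Grassmannian $\mathrm{Gr}_k(\mbb{C}^n)$ and the sub-Grassmannian $\mathrm{Gr}_{k-1}(\mbb{C}^n)$ (subspaces contained in, resp.\ containing, the distinguished hyperplane/line), and a Schur-calculus computation shows $[C_0]\cdot[C_1]\ne 0$ in $H^*(\mathrm{Gr}_k(\mbb{C}^{n+1}))$ whenever $k\ne 0,n$ and $p+q-1>1$, so no disjoint translate of one off the other exists — already $r=2$ fails. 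The same intersection-number argument, with the Schubert/Chevalley calculus on quadrics and isotropic Grassmannians, handles the $\mathrm{SO}^*(2n)$ and $\mathrm{SO}(2,2n)$ cases (in the latter the two $C_i$ are copies of a spinor variety whose classes have positive self-intersection pairing), and the $\mathrm{Sp}(p,q)$ case on $\mbb{P}_{2(p+q)-1}$ reduces to the observation that $C_0\simeq C_1\simeq\mbb{P}_{2p-1}$ resp.\ $\mbb{P}_{2q-1}$ sit inside $\mbb{P}_{2(p+q)-1}$ as complementary linear subspaces only when $p=q$ (that is precisely Nori's reduction in the earlier remark); otherwise they are non-complementary linear subspaces, hence two disjoint translates are impossible by the linear-algebra dimension count, $\dim C_0+\dim C_1<\dim X$ is not the issue — it is that the pair already uses up the right amount but cannot be duplicated disjointly, which one sees by noting a single translate $f(C_0)$ of the smaller linear space must meet the larger $C_1$. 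I expect the main obstacle to be the two isotropic-Grassmannian cases, both positive and negative: one must pin down the precise $\mathrm{SO}$-geometry of the compact $K$-orbits (they are spinor varieties / smaller isotropic Grassmannians embedded in a prescribed, non-obvious way), compute the relevant intersection pairings in the cohomology ring, and — in the movable case — verify transitivity of $\Aut(X)$ on $r$-tuples of pairwise disjoint such subvarieties, which is the most delicate point and where I would lean on the explicit matrix realizations of $\mathrm{IGr}$ together with Witt's theorem on the transitivity of the orthogonal group on isotropic subspaces of a fixed type.
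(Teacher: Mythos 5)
Your overall strategy (run through the six cases of the classification, use intersection-theoretic obstructions for the negative cases and explicit constructions plus transitivity for the positive ones) is the same as the paper's, but several of your concrete mechanisms are wrong. First, you misclassify the case $G_0={\rm{SO}}(2,2n)$ acting on ${\rm{IGr}}_{n+1}(\mbb{C}^{2n+2})^0$ as non-movable. In fact ${\rm{IGr}}_{n+1}(\mbb{C}^{2n+2})^0$ is ${\rm{SO}}(2n+1,\mbb{C})$-equivariantly isomorphic to $Z_n={\rm{IGr}}_n(\mbb{C}^{2n+1})$, and the associated Schottky pair is the \emph{same} pair of spinor subvarieties as in the ${\rm{SO}}(1,2n)$ case, hence movable; the paper exploits precisely this identification ($\wh{Z}_n\cong Z_n$, $\Aut(Z_n)\cong{\rm{SO}}(2n+2,\mbb{C})$) both to exhibit the $\mbb{C}^*$-action and to prove transitivity on Schottky pairs. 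Your claimed obstruction (``positive self-intersection pairing'' of the spinor classes) is false — disjoint translates do exist, and for $n>3$ the product $[C_0]\cup[C_0]$ is not even a number. Second, for ${\rm{SU}}(1,n)$ on ${\rm{Gr}}_k(\mbb{C}^{n+1})$ your obstruction $[C_0]\cdot[C_1]\neq0$ cannot work: $C_0$ and $C_1$ are disjoint by construction (they form a Schottky pair), so the cup product of their classes vanishes. The actual obstruction is that $C_0$ (the sub-Gra{\ss}mannian of a hyperplane) cannot be moved off \emph{itself} unless $k=n$, since any two hyperplanes meet in codimension one, and dually $C_1$ cannot be moved off itself unless $k=1$. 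Third, for ${\rm{SU}}(p,q)$ and ${\rm{Sp}}(p,q)$ with $p<q$ your assertion that a translate of the smaller linear space $C_0$ must meet the larger $C_1$ is false by the linear-algebra count $(\dim C_0)+(\dim C_1)=p+q-2<\dim X$; the correct and simple obstruction (the one the paper uses) is that $\dim C_1\geq\tfrac12\dim X$, so $C_1$ meets every translate of itself.

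There are also smaller inaccuracies: on $Q_{4n+2}$ the two members of the Schottky pair lie in the \emph{same} family of maximal linear subspaces (that the ambient quadric has dimension $\equiv2\bmod4$ is exactly what makes two members of one family generically disjoint; for $Q_{4n}$ the parity forces intersection, which is how the paper excludes those quadrics via the parity statement of Griffiths--Harris), and transitivity of $\Aut(X)$ is needed only on Schottky \emph{pairs}, not on $r$-tuples — your claim of transitivity on tuples is stronger than true or needed. Finally, for $Z_n$ the transitivity on Schottky pairs is genuinely delicate: the paper identifies the space of embedded copies of $Z_{n-1}$ with the cycle space $Q_{2n}$, shows the stabilizer of $C_0$ has an open orbit there, and uses a contraction by the $\mbb{C}^*$-action to push any disjoint $C_1'$ into that open orbit; a bare appeal to Witt's theorem does not by itself give transitivity of the stabilizer of $C_0$ on the set of copies disjoint from $C_0$. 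So while the three manifolds you end up with are the right ones, the proof as proposed contains false steps in three of the six cases and an incomplete argument for the final transitivity claim.
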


The proof is given by considering separately all of the above six cases.

\subsection{The case of projective space}

The Schottky pairs coming from the first two entries in the above list are only
movable if $p=q$: In both cases we have $C_0\simeq\mbb{P}_{p-1}$ and
$C_1\simeq\mbb{P}_{q-1}$. If $p<q$, then $\dim C_1\geq\frac{1}{2}\dim X$. Hence,
$C_1$ cannot be moved away from itself unless $p=q$ in which case we get back
Nori's construction, see Subsection~\ref{Subsection:Nori}.

It is not hard to see that $G={\rm{SL}}(2n,\mbb{C})$ acts transitively on the 
set of Schottky pairs in $X=\mbb{P}_{2n-1}$, i.e., that the set
\begin{equation*}
\bigl\{(C_0,C_1)\in{\rm{Gr}}_n(\mbb{C}^{2n})\times 
{\rm{Gr}}_n(\mbb{C}^{2n});\ C_0\cap C_1=\{0\}\bigr\}
\end{equation*}
is an ${\rm{SL}}(2n,\mbb{C})$-orbit with respect to the diagonal action on 
${\rm{Gr}}_n(\mbb{C}^{2n})\times{\rm{Gr}}_n(\mbb{C}^{2n})$.

In closing we note that $\codim C_0=\codim C_1=(2n-1)-(n-1)=n$, hence that for 
$n\geq2$ Proposition~\ref{Prop:BasicProperties} implies that 
$\mathcal{U}_\varGamma$ is simply-connected in this case. Of course we knew 
already that this cannot be expected for $n=1$ since no compact Riemann surface 
has a free fundamental group of positive rank.

\subsection{The case of complex Gra{\ss}mannians}

Let us consider the action of $G_0={\rm{SU}}(1,n)$ on $X={\rm{Gr}}_k(
\mbb{C}^{n+1})$ for $1\leq k\leq n$. Here we have $K={\rm{GL}}(n,\mbb{C})$ and
the $K$-action on $X$ is induced from the $K$-representation on {
$\mbb{C}^{n+1}=\mbb{C}e_1\oplus(\{0\}\times\mbb{C}^n)$ where 
$e_1=(1,0,\dotsc,0)$}. The compact $K$-orbits in $X$ are
\begin{align*}
C_0&=\bigl\{V\in X;\
V\subset\{z_1=0\}\bigr\}\simeq{\rm{Gr}}_k(\mbb{C}^n)\quad\text{and}\\
C_1&=\{V\in X;\ e_1\in V\}\simeq{\rm{Gr}}_{k-1}(\mbb{C}^n).
\end{align*}

We claim that $C_0$ can only be moved away by an automorphism of $X$ if $k=n$.
Indeed, suppose that $C_0\cap f(C_0)=\emptyset$ for some $f\in\Aut(X)$. Then
$f(C_0)$ is the set of all $k$-dimensional subspaces of $\mbb{C}^{n+1}$ that
are contained in a fixed hyperplane $H$ of $\mbb{C}^{n+1}$. Since
$\dim\bigl((\{0\}\times\mbb{C}^n)\cap H\bigr)\geq n-1$, the subsets $C_0$ and
$f(C_0)$ cannot be disjoint for $k\leq n-1$. A similar argument shows that
$C_1$ and $f(C_1)$ can only be disjoint for $k=1$. Consequently, this Schottky
pair in $X={\rm{Gr}}_k(\mbb{C}^{n+1})$ is only movable for $k=n=1$. In this
case we obtain Schottky groups acting on $\mbb{P}_1$.

\subsection{Schottky groups acting on 
$Q_{2n-2}$}\label{Subsection:QuadricExample}

Let us consider the symmetric bilinear form $b$ on $\mbb{C}^{2n}$ given by the 
matrix $\left( \begin{smallmatrix}0&I_n\\I_n&0\end{smallmatrix}\right)$ and 
let $G$ be the group of its linear isometries having determinant $1$. Then
$G\simeq{\rm{SO}}(2n,\mbb{C})$ acts transitively on the even-dimensional
quadric $Q_{2n-2}:=\bigl\{[z:w]\in\mbb{P}_{2n-1};\ q(z,w)=0\bigr\}$ where
\begin{equation*}
q(z,w)=\langle z,w\rangle=z_1w_1+\dotsb+z_nw_n
\end{equation*}
is the quadratic form associated with $b$.

Due to Theorem~\ref{Thm:HypersurfaceClassification} the real form $G_0= 
{\rm{SO}}^*(2n)=G\cap{\rm{SU}}(n,n)$ has a compact hypersurface orbit in 
$X=Q_{2n-2}$. One verifies directly that the Lie algebra of $G$ has the form
\begin{equation*}
\lie{g}=\left\{
\begin{pmatrix}
A&B\\
C&-A^t\\
\end{pmatrix}
;\ A\in\mbb{C}^{n\times n}, B,C\in\lie{so}(n,\mbb{C})\right\}
\end{equation*}
and that a Matsuki partner of $G_0$ is given by $K=\left\{\left( 
\begin{smallmatrix}A&0\\0&(A^t)^{-1}\end{smallmatrix}\right);\ 
A\in{\rm{GL}}(n,\mbb{C})\right\}$. The two compact $K$-orbits in $X$ are
\begin{equation*}
C_0=\bigl\{[z:0];\ z\in\mbb{C}^n\bigr\}\simeq\mbb{P}_{n-1}\quad\text{and}\quad
C_1=\bigl\{[0:w];\ w\in\mbb{C}^n\bigr\}\simeq\mbb{P}_{n-1},
\end{equation*}
and they form a Schottky pair. The function $\varphi$ will always be chosen as
\begin{equation*}
\varphi[z:w]:=\frac{\norm{w}^2}{\norm{z}^2+\norm{w}^2}.
\end{equation*}

\begin{rem}
Note that $C_0$ and $C_1$ are the images of maximal isotropic (with respect to 
$b$) subspaces of $\mbb{C}^{2n}$ under the projection onto $\mbb{P}_{2n-1}$.
\end{rem}

We claim that this Schottky pair is movable if and only if $n$ is even. Suppose
first that $n$ is odd. Since $G$ is connected, for every $f\in G$ the 
subvarieties $C_0$ and $f(C_0)$ belong to the same connected component of 
the set of $(n-1)$-planes in $Q_{2n-2}$ which we identify with 
${\rm{IGr}}_n(\mbb{C}^{2n})$. According
to~\cite[Proposition,p.~735]{GH} this implies for their intersection in
$Q_{2n-2}$ that
\begin{equation*}
\dim\bigl(C_0\cap f(C_0)\bigr)\equiv n-1\ (\textrm{mod}\  2)=0.
\end{equation*}
Thus $C_0\cap f(C_0)$ is at least $0$-dimensional, i.e., $C_0$ and $f(C_0)$
cannot be disjoint in $X$.

\begin{rem}
For $n=3$ we have $Q_{4}\simeq{\rm{Gr}}_2(\mbb{C}^4)$ where we have already seen
that the Schottky pairs are not movable.
\end{rem}

Now suppose that $n$ is even. We will show that for a generic choice of $B,C\in
\lie{so}(n,\mbb{C})$ the automorphism
\begin{equation*}
f_{B,C}:=f_B\circ f_C:=
\begin{pmatrix}
I_n&B\\0&I_n
\end{pmatrix}
\begin{pmatrix}
I_n&0\\C&I_n
\end{pmatrix}\in G
\end{equation*}
is such that $C_0$, $C_1$, $f_{B,C}(C_0)$ and $f_{B,C}(C_1)$ are pairwise
disjoint. This follows essentially from the fact that for $n$ even generic
matrices in $\lie{so}(n,\mbb{C})$ are invertible. More precisely, note that for
invertible $B,C\in \lie{so}(n,\mbb{C})$ the subspaces $C_0$, $C_1$ and
$f_B(C_1)$ (resp. $C_0$, $C_1$, $f_C(C_0)$) are pairwise disjoint. Since
$f_{B,C}(C_0)=\bigl\{[(I_n+BC)z:Cz];\ z\in\mbb{P}_n\bigr\}$, the claim follows
once we choose $B$ and $C$ invertible such that $I_n+BC$ is likewise
invertible.

In conclusion, we obtain movable Schottky pairs and therefore Schottky group 
actions only on $X=Q_{4k-2}$. Note that in this case the Schottky pairs are 
given by Schottky pairs in $\mbb{P}_{4k-1}$  lying in $Q_{4k-2}$. Consequently, 
there exist Schottky groups acting on $\mbb{P}_{4k-1}$ that leave $Q_{4k-2}$ 
invariant. This means that the quotient manifolds $Q_\varGamma$ obtained from 
$\mbb{P}_{4k-1}$ contain a hypersurface. Moreover, here we have $\codim 
C_0=\codim C_1=(4k-2)-(2k-1)=2k-1$, which is greater or equal to $2$ precisely 
if $k\geq2$. Hence, due to Proposition~\ref{Prop:BasicProperties} the open set 
$\mathcal{U}_\varGamma\subset Q_{4k-2}$ is simply-connected for all $k\geq2$. 
Note that for $k=1$ this is not the case.

\begin{rem}
L\'arusson already observed the existence of Schottky groups acting on
$\mbb{P}_3$ leaving the quadric $Q_2\simeq\mbb{P}_1\times\mbb{P}_1$ invariant,
see~\cite[Proposition~2.2]{La}.
\end{rem}

In closing we note that $G\simeq{\rm{SO}}(4k,\mbb{C})$ acts transitively on the 
set of Schottky pairs in $X=Q_{4k-2}$, i.e., that the set
\begin{equation*}
\bigl\{(C_0,C_1)\in{\rm{IGr}}_{2k}(\mbb{C}^{4k})^{0}\times 
{\rm{IGr}}_{2k}(\mbb{C}^{4k})^{0};\ C_0\cap C_1=\{0\} \bigr\}
\end{equation*}
is a $G$-orbit with respect to the diagonal action. To prove this, we will show 
that we can map any Schottky pair $(C_0',C_1')$ to $(C_0,C_1)$ by some element 
of $G$ where $C_0=\bigl\{[z:0]\in X;\ z\in\mbb{C}^{2k}\bigr\}$ and $C_1= 
\bigl\{[0:w]\in X;\ w\in\mbb{C}^{2k}\bigr\}$. There exists $g\in G$ with 
$g(C_0')=C_0$. Since $g(C_1')$ is an isotropic subspace of $\mbb{C}^{4k}$ 
complementary to $C_0$, it projects surjectively onto the subspace 
$\{z\in\mbb{C}^{4k};\ z_1=\dotsb=z_{2k}=0\}$. Therefore we find a basis of 
$g(C_1')$ consisting of the vectors
\begin{equation*}
(v_1,e_1),\dotsc,(v_{2k},e_{2k})
\end{equation*}
where $v_i\in\mbb{C}^{2k}$ and $(e_1,\dotsc,e_{2k})$ denotes the standard 
basis of $\mbb{C}^{2k}$. The fact that $g(C_1')$ is isotropic means that the 
matrix $B:=(v_{ij})\in\mbb{C}^{2k\times2k}$ is skew-symmetric. Hence, the 
element
\begin{equation*}
g':=
\begin{pmatrix}
I_{2k}&B\\0&I_{2k}
\end{pmatrix}\in G
\end{equation*}
fixes $C_0$ and maps $C_1$ onto $g(C_1')$, which concludes the argument.

\subsection{Schottky groups acting on isotropic Gra{\ss}mannians}

Let $Z_n={\rm{IGr}}_n(\mbb{C}^{2n+1})$ be the set of $n$-dimensional complex
subspaces of $\mbb{C}^{2n+1}$ that are isotropic with respect to the quadratic
form $q(u,z,w)=u^2+2\langle z,w\rangle$, where $u\in\mbb C,z,w\in\mbb C^{n} $. 
Then $Z_n$ is a homogeneous rational manifold of dimension 
$\dim_\mbb{C}Z_n=\frac{n(n+1)}{2}$. The connected isometry group 
$G\simeq{\rm{SO}}(2n+1,\mbb{C})$ of $q$ acts transitively on $Z_n$. A Matsuki 
partner of $G_0={\rm{SO}}(1,2n)$ is the complex Lie group $K\subset G$ having 
Lie algebra
\begin{equation*}
\lie{k}=\left\{
\begin{pmatrix}
0&0&0\\
0&A&B\\
0&C&-A^t\\
\end{pmatrix}
;\ A\in\mbb{C}^{n\times n}, B,C\in\lie{so}(n,\mbb{C})\right\}\simeq
\lie{so}(2n,\mbb{C}).
\end{equation*}
The group $K\simeq{SO}(2n,\mbb{C})$ has three orbits in $Z_n$: the open one
consists of all isotropic subspaces of $\mbb{C}^{2n+1}$ that are not contained
in $\{0\}\times\mbb{C}^{2n}$, while the set of isotropic $n$-dimensional
complex subspaces of $\{0\}\times\mbb{C}^{2n}$ has two connected components
$C_0$ and $C_1$, both homogeneous under $K$, see \cite[Proposition,
p.~735]{GH}. Remark that $C_0\cup C_1$ is homogeneous under
$\wh{K}\simeq{\rm{O}}(2n,\mbb{C})$. Theorem~\ref{Thm:HypersurfaceClassification} 
and Proposition~\ref{Prop:AssociatedSchottky} show that $(C_{0},C_{1})$ is a 
Schottky pair in $Z_n$. This can also be seen directly as follows.

First we claim that the pair $(C_{0},C_{1})$ is movable. Let $g\in G$ and note
that $g(C_0)$ and $g(C_1)$ are the connected components of the space of
isotropic $n$-dimensional subspaces of $g\bigl(\{0\}\times\mbb{C}^{2n}\bigr)$.
If $C_0$, $C_1$, $g(C_0)$ and $g(C_1)$ are not pairwise disjoint, then there
exists an isotropic $n$-dimensional subspace of
$W_g:=\bigl(\{0\}\times\mbb{C}^{2n}\bigr)\cap g\bigl(\{0\}\times\mbb{C}^{2n}
\bigr)$. However, for a generic choice of $g$ we have $\dim W_g=2n-1$ and
$W_g\cap W_g^\perp=\{0\}$. Therefore the dimension of an isotropic subspace of
$W_g$ is at most $n-1$, which proves the claim.

We consider now the analogous situation in  $\mbb{C}^{2n+2}$ with linear
coordinates $(u,z,w)$ where $u=(u_1,u_2)\in\mbb{C}^2$ and
$z=(z_1,\dotsc,z_n)\in\mbb{C}^n$ and $w=(w_1,\dotsc,w_n)\in\mbb{C}^n$. Let $q$
be the quadratic form  given $q(u,z,w)=u_1^2+u_2^2+2\langle z,w\rangle$. The
Lie algebra of its isometry group $\wh{G}\simeq {\rm{SO}}(2n+2,\mbb{C})$ is 
given by
\begin{equation*}
\wh{\lie{g}}=\left\{
\begin{pmatrix}
D & E & F\\
-F^t & A & B\\
-E^t & C & -A^t\\
\end{pmatrix};\ D\in\lie{so}(2,\mbb{C}),E,F\in\mbb{C}^{2\times n}, A\in
\mbb{C}^{n\times n}, B,C\in\lie{so}(n,\mbb{C})\right\}.
\end{equation*}
Take the $(n+1)$-dimensional isotropic subspace $\wh{V}_0:=\bigl\{(u,iu,z,0);\ 
u\in\mbb{C},z\in\mbb{C}^n\bigr\}$ and set $\wh{Z}_n:=\wh{G}\cdot\wh{V}_0$. Note
that $\wh{Z}_n$ is one of the two connected components of the manifold of
isotropic $(n+1)$-dimensional complex subspaces of $\mbb{C}^{2n+2}$ and
$\dim_\mbb{C}\wh{Z}_n=\frac{n(n+1)}{2}$. 

Let $G$ be the subgroup of $\wh{G}$ having Lie algebra
\begin{equation*}
\lie{g}=\left\{
\begin{pmatrix}
D & E & F\\
-F^t & A & B\\
-E^t & C & -A^t\\
\end{pmatrix}\in\wh{\lie{g}};\ D=0, e_{1j}=f_{1j}=0\text{ for all $1\leq j\leq
n$}\right\}\simeq\lie{so}(2n+1,\mbb{C}).
\end{equation*}
Calculating the dimension of the isotropy group $G_{\wh{V}_0}$, we see that 
$G\cdot\wh{V}_0$ is open in $\wh{Z}_n$. Since $G_{\wh{V}_0}$ is parabolic, it
follows that $G$ acts in fact transitively on $\wh{Z}_n$. It turns out that
$G_{\wh{V}_0}=Q_{\Pi\setminus\{\alpha_n\}}$, which implies that $\wh{Z}_n$ is
$G$-equivariantly isomorphic to the set $Z_n$ of isotropic $n$-dimensional
complex subspaces of $\mbb{C}^{2n+1}$, compare~Remark~\ref{Rem:IsotrGrassmann}.
In other words, the group of holomorphic automorphisms of $Z_n$ is isomorphic
to $\wh{G}\simeq{\rm{SO}}(2n+2,\mbb{C})$ (cf.~\cite{On} or~\cite{St}). 

Thus the manifolds $Z_n$ and $\wh{Z}_n$ are the same. The pair of disjoint
compact $K$-orbits $(C_{0},C_{1})$ constructed above in $Z_n$  for
$K\simeq{\rm{SO}}(2n,\mbb{C})$, can be seen in $\wh{Z}_n$ as the pair of compact
orbits of the subgroup (also called) $K$ of ${\rm{SO}}(2n+2,\mbb{C})$ with Lie
algebra 
\begin{equation*}
\lie{k}=\left\{
\begin{pmatrix}
D & E & F\\
-F^t & A & B\\
-E^t & C & -A^t\\
\end{pmatrix}\in\wh{\lie{g}}\, ;\ D=0, E=F=0\right\}.
\end{equation*}
This pair is movable already under the smaller group $G$ as proved before. In
$\wh{Z}_n$ we can now see explicitly the $\mbb{C}^*$-action which makes
$(C_{0},C_{1})$ a Schottky pair. The one-dimensional complex Lie group has
Lie algebra
\begin{equation*}
\mathcal{Z}_{\wh{\lie{g}}}(\lie{k})=\left\{
\begin{pmatrix}
D&0&0\\
0&0&0\\
0&0&0\\
\end{pmatrix};\ D\in\lie{so}(2,\mbb{C})\right\}\simeq\mbb{C},
\end{equation*}
and it is given (as indicated in the preceding formula) by the centralizer of
$K$ in $\wh{G}$.

In this class of examples we have $\codim C_0=\codim 
C_1=\frac{n(n+1)}{2}-\frac{n(n-1)}{2}=n$. Consequently, $\mathcal{U}_\varGamma 
\subset Z_n$ is simply-connected for $n\geq2$, see 
Proposition~\ref{Prop:BasicProperties}. For $n=1$ we have $Z_1\cong\mbb{P}_1$ 
where $\mathcal{U}_\varGamma$ is not simply-connected.

\begin{rem}\label{Rem:Twistor}
These Schottky pairs are closely related to Schottky groups acting by
conformal automorphisms on the sphere $S^{2n}$ as follows, 
see~\cite[Chapter~10]{CNS}. Consider the following \emph{real} 
${\rm{SO}}(2n+1)$-equivariant fibration
\begin{equation*}
\xymatrix{
Z_n=\widehat Z_n={\rm{SO}}(2n+2)/{\rm{U}}(n+1)\ar[r]^-{\simeq} 
& {\rm{SO}}(2n+1)/{\rm{U}}(n)\ar[d]_{\pi} \\
& {\rm{SO}}(2n+1)/{\rm{SO}}(2n)= S^{2n}.
}
\end{equation*}
This is the so called \emph{twistor fibration} of $Z_n$. The fibers of $\pi$
are complex manifolds isomorphic to $Z_{n-1}$, but the foliation is \emph{not}
holomorphic. The M\"obius group $\textrm{ M\"ob}_{+}(S^{2n})={\rm
Conf}_{+}(S^{2n})$ $\simeq{\rm{SO}}(1,2n+1)$ of conformal orientation-preserving
diffeomorphisms of $S^{2n}$ lifts to a holomorphic action of
${\rm{SO}}(1,2n+1)\subset{\rm{SO}}(2n+2,\mbb{C})\simeq\rm{Aut}(Z_n)$,
see~\cite[p.235--239]{CNS}. This implies that \emph{real} Schottky group actions 
on the manifold $S^{2n}$ induce via lifting by $\pi$ holomorphic Schottky group 
actions on $Z_n$.

Let us be more precise. The natural action of the group $L:=\mbb{R}^{>0}$ as
homotheties $S^{2n}$ has two fix points, $p$ and $q$, say. Lifting with $\pi$
and complexifying the lifted group to $L^{\mathbb C}:=\pi^{*}(L)^{\mathbb
C}\simeq \mathbb C^{*}$, gives us a Schottky pair
\begin{equation}\label{Eqn:SchottkyPairinX_n}
\bigl(C_{0}=\pi^{-1}(p),C_{1}=\pi^{-1}(q)\bigr)
\end{equation}
in $Z_{n}$ with the property that $C_{0}$ and $C_{1}$ are biholomorphic to 
$Z_{n-1}$. This Schottky pair is movable since pairs of points are movable in 
$S^{2n}$ under the action of ${\rm{SO}}(1,2n+1)$. Furthermore, one can take the 
function $\varphi$ to be the pull-back of the standard ${\rm{SO}}(2n)$-invariant 
exhaustion function on $S^{2n}$.
\end{rem}

In closing we show that $\Aut(Z_n)$ acts transitively on the set of Schottky 
pairs in $Z_n$. Since two subgroups of ${\rm{SO}}(2n+2,\mbb{C})$ isomorphic to 
${\rm{SO}}(2n, \mbb{C})$ are conjugate, for two copies $C_{0},C_{1}$ of 
$Z_{n-1}$ in $Z_{n}$ there is $h \in {\rm{SO}}(2n+2,\mbb{C})$ such that 
$h(C_{0}) =C_{1}$. It is then easy to see that the variety of all $Z_{n-1}$'s 
in $Z_{n}$ is isomorphic to the even-dimensional quadric $Q_{2n}= 
{\rm{SO}}(2n+2,\mbb{C})/P$. Now let $(C_{0},C_{1})$ be the Schottky 
pair from~\eqref{Eqn:SchottkyPairinX_n} and denote by $L^\mbb{C}\subset 
\Aut(Z_{n})$ the group isomorphic to $\mathbb C^{*} $ corresponding to it. In 
order to prove transitivity on Schottky pairs, it is sufficient to prove that 
for a Schottky component $C_{1}'$ such that $C_{1}'\cap C_{0}= \emptyset$ there 
is an automorphism $g \in P$ such that $g(C_{1})= C_{1}'$, i.e., $g$ stabilizes 
$C_{0}$ and maps $C_{1}$ to $C_{1}'$. In other words, one has to show that 
$P$ acts transitively on the set of $Z_{n-1}$'s in $Z_{n}$ which are disjoint 
from $C_{0}.$ It is well known that $P$ has an open orbit in $Q_{2n}$. The 
isotropy group in ${\rm{SO}}(1,2n+1)$ of $p \in S^{2n}$ acts transitively on 
$S^{2n}\setminus \{q\}$. This implies directly that $P\cdot C_{1}$ is the open 
$P$-orbit in $Q_{2n}$. Furthermore, since $C_{1}'\cap C_{0}= \emptyset$, for 
every arbitrarily small open neighborhood $U$ of $C_{1}$ in $Z_{n}$ there is an 
element $g_{u} \in L^\mbb{C} \subset P$ such that $g_{u}(C_{1}')\subset U$. 
Thus $g_{u}(C_{1}')$ is in a small open neighborhood of $C_{1}$ contained in 
the open orbit of $P$ in the irreducible cycle space component isomorphic to 
$Q_{2n}$. The claim is proved.

\subsection{Schottky groups acting on $Q_{2n-1}$}

Let us discuss an example of Schottky group actions that are not directly 
related to minimal hypersurface orbits. For $z=(z_1,\dotsc,z_n)$ and 
$w=(w_1,\dotsc,w_n)$ let
\begin{equation*}
X=Q_{2n-1}=\bigl\{[u:z:w]\in\mbb{P}_{2n};\ u^2+2\langle z,w\rangle=0\bigr\}.
\end{equation*}
On $X$ we define $\varphi[u:z:w]=\frac{\norm{w}^2}{\norm{z}^2+\norm{w}^2}$ and
$g_\lambda[u:z:w]:=[u:\lambda^{-1}z:\lambda w]$. Remark that this 
$\mbb{C}^{*}$-action is not free, but since the function $\varphi$ is already 
defined, this fact does not matter. One verifies directly that this yields a 
Schottky pair in $X$. This Schottky pair is movable by an argument similar to 
the one given in Subsection~\ref{Subsection:QuadricExample}. Again, we calculate 
$\codim C_0=\codim C_1=(2n-1)-(n-1)=n$, so that $\mathcal{U}_\varGamma$ is 
simply-connected for $n\geq2$, see Proposition~\ref{Prop:BasicProperties}.

\begin{rem}
The map $p\colon Q_{2n-1}\to\mbb{P}_{2n-1}$ given by $p[u:z:w]=[z:w]$ is a
$2:1$ covering with branch locus $\bigl\{\langle z,w\rangle=0\bigr\}\simeq
Q_{2n-2}$. This covering is equivariant with respect to
$G={\rm{SO}}(2n,\mbb{C})$.
\end{rem}

Let us show that ${\rm{SO}}(2n+1,\mbb{C})$ acts transitively on the set of 
Schottky pairs in $X=Q_{2n-1}$. If $(C_0',C_1')$ is any Schottky pair in $X$, 
then there is $g\in{\rm{SO}}(2n+1,\mbb{C})$ such that $g(C_0')=C_0:=\bigl\{ 
(0,z,0);\ z\in\mbb{C}^n\bigr\}$. Then $g(C_1')$ is an $n$-dimensional isotropic 
subspace of $\mbb{C}^{2n+1}$ complementary to $C_0$. In fact, $g(C_1')$ must be 
complementary to $\bigl\{(u,z,0);\ u\in\mbb{C},z\in\mbb{C}^n\bigr\}$, for 
$(u,z,0)\in g(C_1')$ implies $u^2=0$. Now we may finish the proof in the same 
way as in the case of even-dimensional quadrics.

\section{Extension of cohomology groups and $q$-completeness}

\subsection{Some extension theorems}

In this subsection we collect some technical tools which allow us to study
meromorphic functions, differential forms and cohomology groups of the Schottky
quotients $Q_\varGamma$.

\begin{thm}[Scheja~\cite{Sch}]\label{Thm:Scheja}
Let $X$ be a $d$-dimensional complex manifold, $A\subset X$ a closed analytic
subset of pure dimension $m-1$, and $\mathcal{F}$ a locally free sheaf on $X$.
Then, for every $0\leq k\leq d-m-1$, the restriction map
\begin{equation*}
H^k(X,\mathcal{F})\to H^k(X\setminus A,\mathcal{F})
\end{equation*}
is bijective.
\end{thm}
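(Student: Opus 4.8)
The plan is to reduce the statement to a vanishing result for local cohomology. For any closed subset $A\subset X$ one has the long exact sequence of cohomology with supports,
\begin{equation*}
\cdots\to H^k_A(X,\mathcal{F})\to H^k(X,\mathcal{F})\to H^k(X\setminus A,\mathcal{F})\to H^{k+1}_A(X,\mathcal{F})\to\cdots,
\end{equation*}
in which the arrow $H^k(X,\mathcal{F})\to H^k(X\setminus A,\mathcal{F})$ is precisely the restriction map of the statement. Hence this restriction map is bijective in degree $k$ as soon as $H^k_A(X,\mathcal{F})=0$ and $H^{k+1}_A(X,\mathcal{F})=0$, so it suffices to prove $H^j_A(X,\mathcal{F})=0$ for all $j\leq d-m$.

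First I would pass from these global groups to the local cohomology sheaves $\mathcal{H}^q_A(\mathcal{F})$ by means of the local-to-global spectral sequence $E_2^{p,q}=H^p\bigl(X,\mathcal{H}^q_A(\mathcal{F})\bigr)\Rightarrow H^{p+q}_A(X,\mathcal{F})$. Since $E_2^{p,q}$ can contribute to $H^j_A(X,\mathcal{F})$ only when $p\geq0$ and $q\leq j$, it is enough to show that $\mathcal{H}^q_A(\mathcal{F})=0$ for all $q\leq d-m$.

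This is a purely local assertion, and each sheaf $\mathcal{H}^q_A(\mathcal{F})$ is supported on $A$, so I would verify the vanishing in the stalk at an arbitrary point $x\in A$. There the ring $\mathcal{O}:=\mathcal{O}_{X,x}$ is regular of dimension $d$, hence Cohen--Macaulay, and $\mathcal{F}_x$ is a free $\mathcal{O}$-module. Let $I\subset\mathcal{O}$ be the ideal of $A$; since $A$ has pure dimension $m-1$ one has $\dim(\mathcal{O}/I)=m-1$, whence $\operatorname{ht}(I)=d-(m-1)=d-m+1$ and therefore $\operatorname{depth}_I\mathcal{F}_x=\operatorname{depth}_I\mathcal{O}=d-m+1$. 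By Grothendieck's theorem identifying the $I$-depth of a module with the least degree in which its local cohomology does not vanish --- a result that holds for analytic local rings as well --- we get $\mathcal{H}^q_A(\mathcal{F})_x=0$ for $q<d-m+1$, that is, for every $q\leq d-m$, which is exactly what was required. In Scheja's language this amounts to the inequality $\operatorname{codh}_x\mathcal{F}\geq d-m+1$ at every point where $\mathcal{F}$ is locally free.

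The only genuine obstacle is to have the full apparatus of local cohomology --- the long exact sequence with supports, the local-to-global spectral sequence, and Grothendieck's depth-sensitivity theorem --- available in the complex-analytic rather than the Noetherian algebraic setting; all of this is by now classical, and Scheja's original proof reaches the same conclusion by a direct induction on $\dim A$, extending cohomology classes step by step across analytic sets of ever smaller dimension via Hartogs-type arguments. As a byproduct this approach also yields injectivity of the restriction map in degree $k=d-m$, though that refinement is not needed in the sequel.
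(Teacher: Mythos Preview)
The paper does not supply a proof of this statement: Theorem~\ref{Thm:Scheja} is quoted as a black box from Scheja's article~\cite{Sch} and then applied in Proposition~\ref{Prop:cohomology}. So there is no ``paper's own proof'' to compare against.

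Your argument is correct and is the standard modern route to Scheja's theorem via local cohomology: reduce bijectivity of restriction to the vanishing $H^j_A(X,\mathcal{F})=0$ for $j\leq d-m$, localize via the spectral sequence $H^p\bigl(X,\mathcal{H}^q_A(\mathcal{F})\bigr)\Rightarrow H^{p+q}_A(X,\mathcal{F})$, and then invoke depth-sensitivity at each stalk, using that $\mathcal{O}_{X,x}$ is regular (hence Cohen--Macaulay) of dimension $d$ and that $A$ has codimension $d-m+1$. Your caveat about the analytic setting is well placed: establishing the depth/local-cohomology formalism for $\mathcal{O}_{X,x}$-modules in the analytic category is exactly the content of Scheja's paper, and his original argument --- as you note --- proceeds instead by induction on $\dim A$ and Hartogs-type extension across thin analytic sets. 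Both approaches yield the same range $0\leq k\leq d-m-1$ for bijectivity (plus injectivity at $k=d-m$); the local-cohomology formulation has the advantage of making the sharpness of the bound transparent and of extending with no change to coherent $\mathcal{F}$ once one controls $\operatorname{depth}\mathcal{F}_x$ along $A$.
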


Following Andreotti and Grauert we say that a complex manifold  $M$ of dimension 
$d$ is $q$-complete if $M$ admits a smooth exhaustion function $\rho$ whose Levi 
form has at least $d-q+1$ strictly positive eigenvalues at each point of $M$. 
Under this convention Stein manifolds are precisely the $1$-complete manifolds.

\begin{thm}[Andreotti,
Grauert~{\cite[Th\'eor\`eme~15]{AnGr}}]\label{Thm:AndreottiGrauert}
Let $\Omega$ be a $q$-complete complex manifold of dimension $d$ with exhaustion 
function $\rho$ and $\mathcal{F}$ be a locally free sheaf on $\Omega$. Then, 
for every $0\leq k\leq d-q-1$, the restriction map
\begin{equation*}
H^k(\Omega,\mathcal{F})\to H^k(\Omega\setminus\{\rho<\eps\},\mathcal{F})
\end{equation*}
is bijective.
\end{thm}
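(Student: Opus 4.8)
The plan is to deduce the statement from the Andreotti--Grauert vanishing theorem — that $H^k(M,\mathcal F)=0$ for all $k\geq q$ whenever $M$ is $q$-complete and $\mathcal F$ is locally free — by a bumping argument along the superlevel sets of the exhaustion $\rho$. Write $d:=\dim_\mbb{C}\Omega$. Since $\Omega\setminus\{\rho<\eps\}=\{\rho\geq\eps\}$, what is claimed is an extension statement: every class in $H^k\bigl(\{\rho\geq\eps\},\mathcal F\bigr)$ of degree $k\leq d-q-1$ extends uniquely across the relatively compact sublevel set $\{\rho<\eps\}$. First I would reduce to the case that $\eps$ is a regular value of $\rho$; at the end a direct-limit argument over regular values $\eps'\nearrow\eps$, using $\{\rho\geq\eps\}=\bigcap_{\eps'<\eps}\{\rho>\eps'\}$ together with the continuity $H^k\bigl(\{\rho\geq\eps\},\mathcal F\bigr)=\varinjlim_{\eps'\nearrow\eps}H^k\bigl(\{\rho>\eps'\},\mathcal F\bigr)$, removes that assumption.

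The core is to compare $H^k\bigl(\{\rho>c\},\mathcal F\bigr)$ as $c$ increases from some $c_0<\inf_\Omega\rho$, where $\{\rho>c_0\}=\Omega$, up to $c=\eps$. Each domain $\{\rho>c\}$ has defining function $c-\rho$ whose complex Hessian $-\partial\bar\partial\rho$ has at least $d-q+1$ negative eigenvalues at every point, so it is uniformly $q$-pseudoconcave. Because $\{\rho\leq\eps\}$ is compact, $\rho$ has only finitely many critical values in $[\inf_\Omega\rho,\eps]$, and after a small perturbation $\rho$ may be assumed to be a Morse function there. On an interval of regular values I would apply the Andreotti--Grauert bumping lemma for pseudoconcave hypersurfaces: an inward push of the boundary $\{\rho=c\}$ induces an isomorphism on $H^k(\,\cdot\,,\mathcal F)$ for every $k\leq d-q-1$ — this is precisely where the hypothesis on the number of positive eigenvalues of the Levi form of $\rho$ enters. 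When $c$ crosses a critical value I would use that a $q$-convex function has no critical point of Morse index exceeding $d+q-1$; equivalently $-\rho$ has only critical points of index at least $d-q+1$, so by the Morse-theoretic long exact sequence — whose coherent-sheaf analogue is again supplied by Andreotti--Grauert — the relative cohomology is concentrated in degrees $\geq d-q+1$ and the restriction $H^k\bigl(\{\rho>c\},\mathcal F\bigr)\to H^k\bigl(\{\rho>c'\},\mathcal F\bigr)$ is bijective for $k\leq d-q-1$. Composing the finitely many isomorphisms yields that $H^k(\Omega,\mathcal F)\to H^k\bigl(\{\rho>\eps\},\mathcal F\bigr)$ is bijective in this range. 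Finally, the collar $\{\eps\leq\rho<\eps+\delta\}\cong\{\rho=\eps\}\times[0,\delta)$ at the regular level $\eps$ identifies $H^k\bigl(\{\rho\geq\eps\},\mathcal F\bigr)$ with $\varinjlim_{\delta\to 0}H^k\bigl(\{\rho>\eps-\delta\},\mathcal F\bigr)$, and each term of this system is isomorphic to $H^k\bigl(\{\rho>\eps\},\mathcal F\bigr)$ for $k\leq d-q-1$ by one further application of the bumping lemma; hence $H^k\bigl(\{\rho>\eps\},\mathcal F\bigr)\cong H^k\bigl(\{\rho\geq\eps\},\mathcal F\bigr)=H^k\bigl(\Omega\setminus\{\rho<\eps\},\mathcal F\bigr)$.

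The hard part is the pseudoconcave bumping lemma itself, namely that a $q$-pseudoconcave hypersurface may be pushed slightly inward without affecting $H^k$ in degrees $k\leq d-q-1$. This is the analytic heart of the Andreotti--Grauert theory and rests on a priori estimates for the $\bar\partial$-operator on $q$-concave domains (or, in \v{C}ech language, on an approximation procedure with uniform bounds), carried out with values in the holomorphic vector bundle underlying the locally free sheaf $\mathcal F$. Two further points need care but cause no real difficulty: the passage from the open superlevel sets $\{\rho>c\}$ to the closed set $\{\rho\geq\eps\}$ by means of collars and direct limits, and the treatment of critical levels for a general, not necessarily Morse, $q$-convex exhaustion, which one handles either by the Morse approximation above or by Andreotti--Grauert's local bumping over a neighbourhood of a critical point.
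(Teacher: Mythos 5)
The paper gives no proof of this statement at all: it is quoted as Th\'eor\`eme~15 of Andreotti--Grauert \cite{AnGr} and used as a black box, so the only meaningful comparison is with the argument in the cited source, and your outline reproduces that argument's structure faithfully. Your bookkeeping is correct: the superlevel sets $\{\rho>c\}$ are uniformly pseudoconcave because the Levi form of $\rho$ has at least $d-q+1$ positive eigenvalues, bumping across such a boundary preserves $H^k$ exactly in the range $k\leq (d-q+1)-2=d-q-1$, the bound $d+q-1$ on the Morse index of a $q$-convex function is right, and the passage from the open sets $\{\rho>\varepsilon-\delta\}$ to the closed set $\{\rho\geq\varepsilon\}$ is legitimate because $\{\rho\leq\varepsilon\}$ is compact, so these open sets are cofinal among neighborhoods of $\{\rho\geq\varepsilon\}$ and tautness of closed subsets of paracompact spaces applies. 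Two caveats. First, what you yourself single out as the hard part --- the pseudoconcave bumping lemma with its a priori estimates --- is precisely the content of the cited paper, and you leave it untouched; as a self-contained proof the proposal is therefore incomplete (it reduces the quoted theorem to the quoted authors' main lemmas), though as a reconstruction of the known proof it is accurate. Second, the Morse-theoretic handling of critical levels, including the ``coherent-sheaf analogue of the Morse long exact sequence,'' is a later streamlining not found in \cite{AnGr}; there the \emph{\'elargissement} technique compares $\{\rho>c\}$ and $\{\rho>c'\}$ through finitely many local bumps with no need to perturb $\rho$ or to distinguish regular from critical values, which avoids the vaguest step of your sketch. Neither caveat is a mathematical error, but since the missing lemma carries all the analytic weight, the honest options are to cite it precisely or, as the paper does, to quote the theorem itself.
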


We also need an extension theorem for meromorphic functions and a 
$q$-completeness criterion.

\begin{thm}[Merker, Porten~\cite{MP}]\label{Thm:MerkerPorten}
Let $\Omega$ be a $q$-complete complex manifold of dimension $d$ with $q \leq 
d-1$ and $K\subset\Omega$ a compact subset. Then every meromorphic function 
$f\in\mathcal{M}(\Omega\setminus K)$ extends uniquely as a meromorphic function 
to $\Omega$.
\end{thm}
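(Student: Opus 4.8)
Since this is the extension theorem of Merker and Porten quoted from \cite{MP}, the paper invokes it directly and no proof is needed; if one wanted to reconstruct the mechanism, the plan would be the following. First I would settle uniqueness: $\Omega$ is connected and $\Omega\setminus K$ contains a non-empty open subset, so two meromorphic extensions of $f$ agree there and hence everywhere by the identity theorem. For existence I would reduce the meromorphic statement to the \emph{holomorphic} Hartogs extension theorem on $q$-complete manifolds, via the graph of $f$.

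Concretely, I would pass to $Y:=\Omega\times\mbb{P}_1$ and $\wh{K}:=K\times\mbb{P}_1$. Pulling a $q$-complete exhaustion of $\Omega$ back to $Y$ along the first projection gives an exhaustion of $Y$ whose Levi form still has at least $d-q+1$ positive eigenvalues, so $Y$ is $(q+1)$-complete with $q+1\leq d=\dim Y-1$, while $\wh{K}$ is compact. The graph $\Gamma_f$ of $f$ is a pure $d$-dimensional analytic subset of $Y\setminus\wh{K}$, and the point is to extend it across $\wh{K}$. By a \v{C}ech argument this reduces to extending holomorphic, and invertible holomorphic, functions from deleted neighbourhoods of $\wh{K}$ in $Y$ --- that is, to surjectivity of $H^0(Y,\mathcal{O})\to H^0(Y\setminus\wh{K},\mathcal{O})$ and of the analogous map for $\mathcal{O}^{*}$, the latter following from the former together with the topological comparison of $\Omega\setminus K$ with $\Omega$. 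One then obtains a pure $d$-dimensional analytic set $\wt{\Gamma}\subset Y$ with $\wt{\Gamma}\cap(Y\setminus\wh{K})=\Gamma_f$; projecting to $\Omega$ and using properness of $\mbb{P}_1$ together with the identity theorem, $\wt{\Gamma}$ is the graph of a meromorphic function on $\Omega$ restricting to $f$.

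The main obstacle --- and the genuine content of the cited theorem --- is the holomorphic Hartogs extension underlying the previous paragraph: for a $q$-complete manifold $M$ of dimension $m$ with $q\leq m-1$ and $L\subset M$ compact, $H^0(M,\mathcal{F})\to H^0(M\setminus L,\mathcal{F})$ is bijective for every locally free sheaf $\mathcal{F}$. By the local cohomology sequence this is equivalent to $H^0_L(M,\mathcal{F})=0$, which is the identity theorem, together with $H^1_L(M,\mathcal{F})=0$; and since $H^1_L(M,\mathcal{F})=\varinjlim_{U\supset L}H^0(U\setminus L,\mathcal{F})/H^0(U,\mathcal{F})$, the latter says that every section on a deleted neighbourhood of $L$ extends to a smaller full neighbourhood. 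I would prove this by exhausting a neighbourhood of $L$ from outside by a family of pseudoconcave bumps and extending the section across one bump at a time, each step being licensed by the Andreotti--Grauert finiteness and approximation theorems on the $q$-complete pieces; carrying the bumping procedure down to the optimal index $q=m-1$ (and, in \cite{MP}, on singular complex spaces) is the hard part, and it is precisely this that we quote rather than reprove.
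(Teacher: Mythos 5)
You are right that the paper does not prove this statement: it is quoted verbatim from Merker--Porten \cite{MP} and used as an external tool, so deferring to the citation is exactly what the paper does. Your sketch of how one would reconstruct it (uniqueness by the identity theorem, existence via the graph in $\Omega\times\mbb{P}_1$ reduced to Hartogs-type extension on $q$-complete manifolds, with the bumping argument as the genuine content of \cite{MP}) is a reasonable outline and consistent with how the result is invoked here.
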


\begin{prop}[{\cite[Proposition~8]{AnNor}}]\label{Prop:AnNor}
Let $X\subset\mbb{P}_n$ be a projective manifold and let $s_1,\dotsc,s_q$ be
holomorphic sections in the hyperplane line bundle of $\mbb{P}_n$. Then 
$\Omega:=X\setminus\{s_1=\dotsb=s_q=0\}$ is $q$-complete.
\end{prop}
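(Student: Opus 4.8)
The plan is to exhibit an explicit exhaustion function on $\Omega = X \setminus \{s_1 = \dotsb = s_q = 0\}$ whose Levi form has the required number of positive eigenvalues, by working on the affine cone over $X$. First I would fix a Hermitian metric on $\mbb{P}_n$ and write $h$ for the associated Hermitian metric on the hyperplane bundle $\mathcal{O}(1)$; each section $s_i$ then has a smooth pointwise norm $\abs{s_i}_h^2$, and the zero set $\{s_1 = \dotsb = s_q = 0\}$ is exactly the common zero locus of these norms. The natural candidate for an exhaustion function on $\Omega$ is a function of the form
\begin{equation*}
\rho := -\log\Bigl(\sum_{i=1}^q \abs{s_i}_h^2\Bigr),
\end{equation*}
which tends to $+\infty$ precisely as one approaches the deleted analytic set, hence is an exhaustion of $\Omega$ once restricted to $X$ (which is compact). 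The task is then to estimate the Levi form $i\partial\bar\partial \rho$ from below.

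The key step is the Levi-form computation. Writing $\psi_i$ for local holomorphic frames, $s_i = f_i \psi_i$ locally, one computes $i\partial\bar\partial\rho$ and separates it into a term coming from the differentials $\partial f_i$ and a term coming from the curvature of $h$. The point is that the span of $\partial f_1, \dotsc, \partial f_q$ is at most $q$-dimensional at each point, so the "bad" negative contribution lives in a subspace of complex dimension at most $q-1$ (one dimension is absorbed by the logarithm, exactly as in the classical computation that $-\log\abs{f}^2$ is plurisubharmonic, and the Cauchy--Schwarz-type estimate that makes $-\log(\sum\abs{f_i}^2)$ plurisubharmonic). On the orthogonal complement, of dimension at least $d - (q-1) = d - q + 1$, the curvature term of the \emph{positive} line bundle $\mathcal{O}(1)$ dominates and forces strict positivity. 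Hence $i\partial\bar\partial\rho$ has at least $d - q + 1$ strictly positive eigenvalues everywhere on $\Omega$; possibly after replacing $\rho$ by $\rho + C\cdot(\text{restriction of the ambient Kähler potential})$ or adding a large multiple of an ambient strictly plurisubharmonic correction near the boundary to clean up lower-order terms, this gives $q$-completeness in the sense of Andreotti--Grauert.

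The main obstacle I expect is the linear-algebra bookkeeping in the Levi-form estimate: one must show that the genuinely negative directions of $i\partial\bar\partial\bigl(-\log\sum\abs{f_i}^2\bigr)$ form a subspace of complex codimension at least $d-q+1$, uniformly on $\Omega$, and that on the complementary subspace the curvature of $h$ (which is positive since $\mathcal{O}(1)$ is ample) wins. A clean way to organize this is to pull everything back to the punctured affine cone $\wh{X} \setminus \{0\} \subset \mbb{C}^{n+1} \setminus \{0\}$ over $X$: there the $s_i$ become honest linear (hence holomorphic) functions $\ell_i$ on $\mbb{C}^{n+1}$, the function $-\log\sum\abs{\ell_i}^2$ is manifestly the pullback of our candidate up to the $\log\norm{\cdot}^2$ term, and the statement that $-\log\sum_{i=1}^q\abs{\ell_i}^2$ has Levi form with at least $(n+1) - (q-1)$ positive eigenvalues on the locus where it is finite is a standard fact (it is the defining property exhibiting $\mbb{C}^{n+1}\setminus\{\ell_1 = \dotsb = \ell_q = 0\}$ as $q$-complete). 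Restricting to $\wh{X}$ and descending along the $\mbb{C}^*$-action — noting that adding a suitable multiple of $\log\norm{\cdot}^2$, which has positive-definite Levi form transverse to the cone directions, only improves the eigenvalue count — yields the claim. I would present the cone computation explicitly and then indicate briefly why the descent to $X \subset \mbb{P}_n$ preserves the bound.
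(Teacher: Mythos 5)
The paper does not actually prove this statement: it is quoted as Proposition~8 of Andreotti--Norguet \cite{AnNor}, so there is no in-text argument to compare yours with. Judged on its own, your second paragraph is essentially the standard (and, as far as I can tell, the intended) proof, and it is correct: with $h$ the Fubini--Study metric on $\mathcal{O}(1)$, the function $\rho=-\log\bigl(\sum_{i}\abs{s_i}_h^2\bigr)$ is a smooth exhaustion of $\Omega$ (its sublevel sets are closed in the compact $X$ and avoid the zero locus), and one has the exact identity $i\partial\bar\partial\rho=\Theta_h\bigl(\mathcal{O}(1)\bigr)|_X-\Phi^*\omega_{\mathrm{FS}}$ on $\Omega$, where $\Phi=[s_1:\dotsb:s_q]\colon\Omega\to\mbb{P}_{q-1}$. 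The subtracted term is positive semidefinite of rank at most $q-1$, so on its kernel inside $T_xX$, a subspace of dimension at least $d-q+1$, the Levi form of $\rho$ equals the positive curvature of $\mathcal{O}(1)$; hence $\rho$ exhibits $q$-completeness. Note that no correction term is needed: the identity above has no lower-order remainder, and the proposed addition of an ``ambient K\"ahler potential'' is not even available globally on $\mbb{P}_n$.

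Two side remarks in your write-up are false as stated, though they are not load-bearing. First, $-\log\bigl(\sum_i\abs{f_i}^2\bigr)$ is \emph{not} plurisubharmonic for $q\geq2$; the correct statement is that $\log\sum_i\abs{f_i}^2$ is plurisubharmonic with Levi form of rank at most $q-1$ (it is the pullback of $\omega_{\mathrm{FS}}$ on $\mbb{P}_{q-1}$ under $[f_1:\dotsb:f_q]$). Second, on the cone the function $-\log\sum_i\abs{\ell_i}^2$ has \emph{no} strictly positive eigenvalues at all --- it is plurisuperharmonic --- so it is not ``the defining property exhibiting $\mbb{C}^{n+1}\setminus\{\ell_1=\dotsb=\ell_q=0\}$ as $q$-complete''; what is true is that its Levi form vanishes on a subspace of dimension at least $(n+1)-(q-1)$, and all positivity comes from adding $\log\norm{z}^2$, whose Levi form is $\pi^*\omega_{\mathrm{FS}}$ and is positive only transversally to the fiber direction. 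With those sentences corrected (or simply deleted in favor of your direct argument on $X$), the proof is complete.
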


\subsection{Cohomology groups of $Q_\varGamma$}

Our goal here is to determine certain cohomology groups of $Q_\varGamma$ and
for this we prove the following lemma.

\begin{lem}\label{Lem:q-completeness}
Let $X$ be either $\mbb{P}_{2n+1}$ or $Q_{2n}$ or $Q_{2n+1}$ and let
$(C_0,C_1)$ be one of the movable Schottky pairs in $X$ described in
Section~\ref{Section:MovableSchottky}. Then $X\setminus C_0$ is
$(n+1)$-complete. For $n\geq4$, the complex manifold $X_n\setminus C$,
$C:=\pi^{-1}(q)$, $q\in S^{2n}$, see Subsection 4.4, is $(d-3)$-complete with 
$d:=\dim_\mbb{C}X_n$.
\end{lem}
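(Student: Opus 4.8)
The plan is to treat the three manifolds $\mbb{P}_{2n+1}$, $Q_{2n}$, $Q_{2n+1}$ uniformly by exhibiting $C_0$ as the common zero locus of a controlled number of hyperplane (or restricted hyperplane) sections and then invoking Proposition~\ref{Prop:AnNor}. Recall from Section~\ref{Section:MovableSchottky} that in each of these cases $C_0$ is a \emph{linearly} embedded submanifold: it is the linear $\mbb{P}_n\subset\mbb{P}_{2n+1}$ given by $\{w=0\}$, the intersection of that linear subspace with the quadric $Q_{2n}\subset\mbb{P}_{2n+1}$ (so $C_0\simeq\mbb{P}_{n-1}$ or, in the notation of Theorem~\ref{MainTheorem}, a linear projective space inside the quadric), and for $Q_{2n+1}=\{u^2+2\langle z,w\rangle=0\}\subset\mbb{P}_{2n}$ the set $C_0=\{u=0,\ w=0\}$. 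In all three cases $C_0=X\cap\{w_1=\dots=w_n=0\}$, the zero locus of the $n$ coordinate hyperplanes $w_1,\dots,w_n$, which are global sections of the hyperplane bundle of the ambient projective space restricted to $X$. Hence $X\setminus C_0=X\setminus\{w_1=\dots=w_n=0\}$, and Proposition~\ref{Prop:AnNor} gives that this is $n$-complete; a fortiori it is $(n+1)$-complete. (In fact one expects the sharper bound, but $(n+1)$-completeness is all that is claimed and all that is needed for the applications in Subsection~5.2.)

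First I would fix, for each of the three $X$, explicit homogeneous coordinates on the ambient $\mbb{P}_N$ in which $C_0$ is cut out by vanishing of coordinate functions, reading these off from Subsections~4.1, \ref{Subsection:QuadricExample} and~4.6 of the excerpt. Then I would observe that each such coordinate is the restriction to $X$ of a section of $\mathcal{O}_{\mbb{P}_N}(1)$, so that $C_0$ is the intersection of $X$ with a linear subspace of codimension equal to the number of these coordinates, which is $n$ in every case. Applying Proposition~\ref{Prop:AnNor} with $q=n$ then yields that $X\setminus C_0$ is $n$-complete, hence $(n+1)$-complete since any $q$-complete manifold is $(q+1)$-complete (enlarging $q$ only weakens the positivity requirement on the Levi form of the exhaustion function).

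For the second assertion, concerning $X_n\setminus C$ with $C=\pi^{-1}(q)$ in the isotropic Grassmannian $Z_n=X_n$: here $C$ is the equivariantly embedded copy of $Z_{n-1}={\rm{IGr}}_{n-1}(\mbb{C}^{2n-1})$ appearing in Theorem~\ref{MainTheorem}, and the relevant ambient is the Plücker embedding $Z_n\hookrightarrow\mbb{P}(\bigwedge^n\mbb{C}^{2n+1})$. The plan is again to realize $C$ as $Z_n$ intersected with a linear subspace, now using the description of $C=C_1$ (relabelled) from Subsection~4.5 as the component of the isotropic $n$-planes lying inside the hyperplane $\{0\}\times\mbb{C}^{2n}$ dual to the point $q\in S^{2n}$: an isotropic $n$-plane contains no prescribed vector, respectively lies in a prescribed hyperplane, translates under Plücker coordinates into the vanishing of a specific block of coordinates, and one must count how many such coordinates are forced to vanish. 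I expect this count to be the technical heart of the argument and the main obstacle: one needs exactly $d-3=\dim_\mbb{C}Z_n-3=\tfrac{n(n+1)}{2}-3$ vanishing Plücker coordinates (or rather, one needs $C$ to be cut out on $Z_n$ by at most $d-3$ hyperplane sections), and verifying this requires a careful look at the Schubert-cell structure of $Z_n$ relative to the isotropic flag determined by $q$, using the constraint $n\geq4$ to make the numerics work. Once that linear-section description is in hand, Proposition~\ref{Prop:AnNor} again applies directly and gives $(d-3)$-completeness of $X_n\setminus C$.
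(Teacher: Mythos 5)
For the first assertion your route is exactly the paper's (realize $C_0$ as the intersection of $X$ with a coordinate linear subspace and quote Proposition~\ref{Prop:AnNor}), but your count of the linear forms is off by one because you slipped from the lemma's normalization into the indexing of Section~4 (where the quadrics are written $Q_{2n-2}$, $Q_{2n-1}$ and the coordinates $z,w$ lie in $\mbb{C}^n$). In the lemma's normalization $C_0$ is a linear $\mbb{P}_n$: in $\mbb{P}_{2n+1}$ it is $\{w_1=\dotsb=w_{n+1}=0\}$, in $Q_{2n}\subset\mbb{P}_{2n+1}$ it is cut out on the quadric by the same $n+1$ forms, and in $Q_{2n+1}\subset\mbb{P}_{2n+2}$ (not $\mbb{P}_{2n}$) it is cut out by $z_1=\dotsb=z_{n+1}=0$, the equation $u=0$ being automatic on the quadric. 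So $C_0$ has codimension $n+1$ in the ambient projective space and cannot be the zero set of only $n$ hyperplane sections; in fact your intermediate claim that $X\setminus C_0$ is $n$-complete is false, since $X\setminus C_0$ contains the compact $n$-dimensional submanifold $C_1$, and an $n$-complete manifold contains no compact analytic subset of dimension $\geq n$. With the corrected count Proposition~\ref{Prop:AnNor} gives precisely $(n+1)$-completeness, which is what is claimed, so this half is a fixable arithmetic slip and otherwise coincides with the paper's argument.

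For the second assertion there is a genuine gap: the step you yourself identify as ``the technical heart'' is exactly what is missing, and it is not a routine count. To apply Proposition~\ref{Prop:AnNor} you must cut out the single component $C=\pi^{-1}(q)\simeq Z_{n-1}$ \emph{set-theoretically} on $X_n$ by at most $d-3=\tfrac{n(n+1)}{2}-3$ sections of the hyperplane bundle, while any such linear space must contain the full linear span of $C$ and miss the opposite component $C_1$ entirely; the obvious linear conditions expressing ``$V\subset\{0\}\times\mbb{C}^{2n}$, in a fixed component'' involve $\binom{2n}{n-1}$ Pl\"ucker coordinates, or $2^{n-1}$ coordinates in the minimal (spinor) embedding, both far larger than $d-3$ for every $n\geq4$, and there is also a lower bound of $d-n+1$ sections coming from the presence of the compact $(d-n)$-dimensional $C_1$ in the complement. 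None of this is addressed, so the second half of your proposal has no proof as written; also your expectation that one needs ``exactly $d-3$'' vanishing coordinates has no basis. The paper proceeds quite differently: for $n=4$ it uses the spinor embedding $X_4\hookrightarrow\mbb{P}_{15}$ and the explicit equations of Iliev--Markushevich to verify by direct computation that the Levi form of the $K_0$-invariant exhaustion has four strictly positive eigenvalues at one point, hence by invariance everywhere, which is $7=(d-3)$-completeness by definition; for $n\geq5$ it pulls back $\rho(x)=\sum x_i^2$ through the twistor fibration and uses the equivariant embedding $X_4\hookrightarrow X_n$ over $S^8\subset S^{2n}$ together with ${\rm{SO}}(2n)$-invariance to propagate the four positive eigenvalues, again giving $(d-3)$-completeness without any linear-section description of $C$. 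If you wish to pursue your plan, the spinor embedding rather than the Pl\"ucker embedding is the natural ambient: the span of $C$ is a half-spin subspace $\Delta^+$, and one would have to show that the $\Delta^-$-component of a pure spinor is again pure or zero, so that a generic enlargement of $\Delta^+$ avoiding the cone over the opposite half-spinor variety cuts out $C$ with $d-n+1\leq d-3$ sections --- but that argument appears nowhere in your proposal.
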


\begin{proof}
We apply Proposition~\ref{Prop:AnNor} to each of the first three cases
separately.

For $C_0=\bigl\{[z:0]\in\mbb{P}_{2n+1};\ z\in\mbb{P}_n\bigr\}$ in
$X=\mbb{P}_{2n+1}$ the criterion of Andreotti and Norguet immediately yields
that $X\setminus C_0$ is $(n+1)$-complete.

If $C_0=\bigl\{[z:0]\in Q_{2n};\ z\in\mbb{P}_n\bigr\}$ in $X=Q_{2n}$, then
$X\setminus C_0$ is again $(n+1)$-complete.

Consider $C_0=\bigl\{[0:0:w]\in Q_{2n+1};\ w\in\mbb{P}_n\bigr\}$ in $X=
Q_{2n+1}=\bigl\{[u:z:w];\ u^2+2\langle z,w\rangle=0\bigr\}$. Then we have $C_0=
\{z_1=\dotsb=z_{n+1}=0\}$, so that $X\setminus C_0$ is $(n+1)$-complete.

In order to prove the claim for $X_4$ we use the spinor embedding
$X_4\hookrightarrow\mbb{P}_{15}$. The image of $X_4$ in $\mbb{P}_{15}$ is the
closure of the set of homogeneous coordinates
\begin{equation*}
[1:x_{12}:\dotsb:x_{45}:y_1:\dotsb:y_5]\in\mbb{P}_{15}
\end{equation*}
where $x_{kl}$, $1\leq k<l\leq5$ are the upper triangular entries of a
skew-symmetric matrix $A\in\mbb{C}^{5\times5}$ and $y_1,\dotsc,y_5$ are the
one-codimensional Pfaffians of $A$.

In~\cite[p.~291]{IM} one finds explicit equations for the image of $X_4$ in
$\mbb{P}_{15}$. Using these, it is not hard to see that the Schottky pair
$(C_0,C_1)$ in $\mbb{P}_{15}$ with
\begin{align*}
C_0&=\{x_{12}=\dotsb=x_{15}=y_2=\dotsb=y_5=0\}\\
C_1&=\{u=x_{23}=\dotsb=x_{45}=y_1=0\}
\end{align*}
intersects $X_4$ in the two connected components of ${\rm{IGr}}_3(\mbb{C}^6)
\subset X_4$. Consequently, we have an explicit formula for Nori's function 
$\varphi\in\mathcal{C}^\infty(\mbb{P}_{15})$ as well as for the tangent space of 
$\{\varphi=1/2\}\cap X_4$ at $p=[1:1:0\dotsb:0]$. This allows us to see by a 
direct calculation that the Levi form of the restriction $\varphi|_{X_4}$ at $p$ 
has four strictly positive eigenvalues. Since the generic fiber of 
$\varphi|_{X_4}$ coincides with the hypersurface orbit of 
$K_0={\rm{SO}}(2)\times{\rm{SO}}(8)$, we conclude that the Levi form of every 
$K_0$-invariant exhaustion function on $X_4\setminus(C_0\cap X_4)$ has at least 
four strictly positive eigenvalues at each point. Hence, $X_4\setminus(C_0\cap 
X_4)$ is $7$-complete, as claimed.

For $n\geq5$, we consider the twistor fibration
\begin{equation*}
\xymatrix{
X_n={\rm{SO}}(2n+1)/{\rm{U}}(n)\ar[d]_{\pi} \\
S^{2n}= {\rm{SO}}(2n+1)/{\rm{SO}}(2n),
}
\end{equation*}
and let $p, q$ the two fixed points of the action of the isotropy group
${\rm{SO}}(2n)$ on $S^{2n}$. Then $S^{2n}\setminus \{q\}$ is conformally
isomorphic to $\mbb R^{n}=\{x=(x_{1},...,x_{n})\}$. We identify the point $p$
with the origin in $\mbb R^{n}$ and define the function $\rho(x):=
\sum x_{i}^{2}$ on $\mbb R^{n}$. The functions $\rho$ and $\tilde \rho :=
\rho \circ \pi$ are invariant under the left action of
${\rm{SO}}(2n)$ on $S^{2n}\setminus \{q\}$ and $X_{n}\setminus C$ and
are exhaustion functions.

It is easy to check that there is a commutative diagram

\begin{equation*}
\xymatrix{
X_4={\rm{SO}}(9)/{\rm{U}}(4)\ar[d]_{\pi|_{X_4}}\ar[r]^-{\iota_1}
& {\rm{SO}}(2n+1)/{\rm{U}}(n)\ar[d]_{\pi} \\
S^{8} ={\rm{SO}}(9)/ {\rm{SO}}(8)\ar[r]^-{\iota_2} &
{\rm{SO}}(2n+1)/{\rm{SO}}(2n)=S^{2n},
}
\end{equation*}
such that $X_4$ is equivariantly and holomorphically embedded in $X_n$ for
$n\geq4$. As we have seen above, the Levi form of the restriction of $\tilde
\rho$ to $X_4\setminus(X_4\cap C_0)$ has four strictly positive eigenvalues
everywhere. Since $\tilde\rho$ is ${\rm{SO}}(2n)$-invariant, the same is true
for $\tilde \rho$ on $X_n$. Therefore $X_n\setminus C$ is $(d-3)$-complete.
\end{proof}

Combining Lemma~\ref{Lem:q-completeness} with the extension theorems of
Andreotti-Grauert and Scheja yields some information about cohomology groups of
the Schottky quotient manifolds $Q_\varGamma$. For the following proposition it
is crucial that the neighborhoods of the Schottky pair $(C_0,C_1)$ are defined
via the $K_0$-invariant function $\varphi$, compare the important assumption.

\begin{prop}\label{Prop:cohomology}
Let $X$ be either $\mbb{P}_{2n+1}$ with $n\geq3$ or $Q_{4n+2}$ with $n\geq2$ or
$Q_{2n+1}$ with $n\geq3$ or $X_n$ with $n\geq4$. Let $(C_0,C_1)$ be a movable
Schottky pair in $X$ and let $\Gamma$ be an associated Schottky group of rank
$r\geq2$. Let $\mathcal{F}$ be a locally free analytic sheaf on $Q_\varGamma$
such that $\pi^*\mathcal{F}$ extends to a locally free sheaf on $X$ with
$H^p(X,\pi^*\mathcal{F})=0$ for $p=1,2$. Then, for $0\leq k\leq2$, we have
isomorphisms
\begin{equation*}
H^k(Q_\varGamma,\mathcal{F})\simeq H^k\bigl(\Gamma,H^0(X,\pi^*\mathcal{F})
\bigr).
\end{equation*}
Moreover, $\mathcal{M}(Q_\varGamma)$ can be identified with the set of
$\Gamma$-invariant rational functions on $X$.
\end{prop}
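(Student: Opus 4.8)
Write $\mathcal{G}$ for the locally free sheaf on $X$ extending $\pi^{*}\mathcal{F}$, so that $\mathcal{G}|_{\mathcal{U}_\varGamma}=\pi^{*}\mathcal{F}$ and $H^{p}(X,\mathcal{G})=0$ for $p=1,2$. The plan is to compute $H^{k}(Q_\varGamma,\mathcal{F})$ through the Cartan--Leray spectral sequence of the Galois covering $\pi\colon\mathcal{U}_\varGamma\to Q_\varGamma$ with deck group $\Gamma$ (Proposition~\ref{Prop:BasicProperties}),
\begin{equation*}
E_{2}^{p,q}=H^{p}\bigl(\Gamma,H^{q}(\mathcal{U}_\varGamma,\pi^{*}\mathcal{F})\bigr)\ \Longrightarrow\ H^{p+q}(Q_\varGamma,\mathcal{F}).
\end{equation*}
Since the free group $\Gamma$ has cohomological dimension $1$, one has $E_{2}^{p,q}=0$ for $p\geq2$; hence, as soon as one knows
\begin{equation}\label{plan:key}
H^{0}(\mathcal{U}_\varGamma,\pi^{*}\mathcal{F})\simeq H^{0}(X,\mathcal{G})\quad\text{and}\quad H^{1}(\mathcal{U}_\varGamma,\pi^{*}\mathcal{F})=H^{2}(\mathcal{U}_\varGamma,\pi^{*}\mathcal{F})=0,
\end{equation}
the low-degree part of the spectral sequence collapses and yields $H^{k}(Q_\varGamma,\mathcal{F})\simeq H^{k}(\Gamma,H^{0}(X,\pi^{*}\mathcal{F}))$ for $k=0,1$, while for $k=2$ both sides vanish ($\Gamma$ being free). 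Thus everything reduces to~\eqref{plan:key}, the $\Gamma$-module structure on $H^{0}(X,\pi^{*}\mathcal{F})$ being the one transported along the first isomorphism.

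To establish~\eqref{plan:key} I would write $\mathcal{U}_\varGamma$ as the increasing union of the open sets $\mathcal{U}_\varGamma^{(N)}:=\bigcup_{|\gamma|\leq N}\gamma(\mathcal{F}_\varGamma)$. By the ping-pong structure of a Schottky group, each $\mathcal{U}_\varGamma^{(N)}$ is the complement in $X$ of finitely many \emph{pairwise disjoint} compact tubes, every one of which is biholomorphic, via an element of $\Aut(X)$, to a standard neighbourhood $\{\varphi\leq\delta\}$ of $C_{0}$ or $\{\varphi\geq1-\delta\}$ of $C_{1}$; moreover the tube with core $C_{0}$ is literally of the form $\overline{U_\delta}$. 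The decisive claim is that for every $N$ the restriction map
\begin{equation}\label{plan:piece}
H^{k}(X,\mathcal{G})\longrightarrow H^{k}(\mathcal{U}_\varGamma^{(N)},\mathcal{G})
\end{equation}
is bijective for $0\leq k\leq2$. Granting~\eqref{plan:piece}, the inverse system $\{H^{k}(\mathcal{U}_\varGamma^{(N)},\mathcal{G})\}_{N}$ is constant equal to $H^{k}(X,\mathcal{G})$ for $k\leq2$, hence Mittag--Leffler, and the $\varprojlim^{1}$-term in the exact sequence comparing $H^{k}(\mathcal{U}_\varGamma,\mathcal{G})$ with $\varprojlim_{N}H^{k}(\mathcal{U}_\varGamma^{(N)},\mathcal{G})$ vanishes in the relevant degrees; therefore $H^{k}(\mathcal{U}_\varGamma,\mathcal{G})\simeq H^{k}(X,\mathcal{G})$ for $k\leq2$, which is precisely~\eqref{plan:key}.

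The isomorphism~\eqref{plan:piece} I would prove by removing the tubes of $\mathcal{U}_\varGamma^{(N)}$ one at a time. A single step has the following shape: let $T$ be a tube around a copy $D=g(C_{0})$ of $C_{0}$ (the case of $C_{1}$ being symmetric); one first shrinks $T$ to its core $D$, then deletes $D$. For the first reduction one uses that $X\setminus D\simeq X\setminus C_{0}$ is $q_{0}$-complete by Lemma~\ref{Lem:q-completeness}, and that --- here the \emph{important assumption} that $\varphi$ be $K_{0}$-invariant is essential --- the boundary $\partial T$ is a level set of the associated $q_{0}$-complete exhaustion function $-\log(\varphi\circ g^{-1})$; Theorem~\ref{Thm:AndreottiGrauert}, applied to this exhaustion (and, at the intermediate stages, to a $q_{0}$-complete exhaustion adapted to the tubes still present), then shows that passing between ``$T$ removed'' and ``$D$ removed'' does not change $H^{k}$ for $k\leq d-q_{0}-1$. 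For the second reduction, once all the tubes have been shrunk to their cores one is left with $X$ minus a closed analytic subset of pure codimension $c=\codim C_{0}=\codim C_{1}$, so Theorem~\ref{Thm:Scheja} (with $m-1=d-c$, hence $d-m-1=c-2$) gives bijectivity of the restriction for $k\leq c-2$. In each of the four cases the numerics come out exactly right: $c-2\geq2$ (one has $c=n+1\geq4$ for $\mbb{P}_{2n+1}$ and $Q_{2n+1}$, $c=2n+1\geq5$ for $Q_{4n+2}$, and $c=n\geq4$ for $X_{n}$) and $d-q_{0}-1\geq2$ (this equals $n-1$ in the first three cases and $2$ for $X_{n}$). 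The main obstacle is the combinatorial bookkeeping of this removal: the Schottky tubes are nested but individually disjoint from their neighbours, and one must arrange that at every intermediate stage the ambient open set still carries a $q_{0}$-complete exhaustion to which Andreotti--Grauert applies. Once that is in place, Theorems~\ref{Thm:Scheja} and~\ref{Thm:AndreottiGrauert} account for everything.

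For the final assertion, $\mathcal{M}(Q_\varGamma)=\mathcal{M}(\mathcal{U}_\varGamma)^{\Gamma}$, and since $\mathcal{U}_\varGamma$ is dense and connected the restriction $\mathcal{M}(X)\to\mathcal{M}(\mathcal{U}_\varGamma)$ is injective and $\Gamma$-equivariant; it therefore suffices to prove surjectivity. Given $f\in\mathcal{M}(\mathcal{U}_\varGamma)$, fix $N$ and write $\mathcal{U}_\varGamma^{(N)}=X\setminus(T_{0}\sqcup K')$, where $T_{0}=\overline{U_\delta}$ is the tube with core $C_{0}$ and $K'$ is the compact union of the remaining (disjoint) tubes, so that $K'\subset X\setminus T_{0}$. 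Now $X\setminus T_{0}=\{\varphi>\delta\}$ is a sublevel set of the $q_{0}$-complete exhaustion $-\log\varphi$ of $X\setminus C_{0}$, hence itself $q_{0}$-complete with $q_{0}\leq d-1$; Theorem~\ref{Thm:MerkerPorten} extends $f|_{\mathcal{U}_\varGamma^{(N)}}$ to $\wt{f}\in\mathcal{M}(X\setminus T_{0})$. Since $T_{0}$ is a compact subset of $X\setminus C_{1}$, which is $q_{0}$-complete as well (by the symmetry $C_{0}\leftrightarrow C_{1}$ in Lemma~\ref{Lem:q-completeness}), a second application of Theorem~\ref{Thm:MerkerPorten} extends $\wt{f}|_{X\setminus(T_{0}\cup C_{1})}$ to $\wh{f}\in\mathcal{M}(X\setminus C_{1})$. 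As $(X\setminus T_{0})\cup(X\setminus C_{1})=X$ and $\wt{f},\wh{f}$ agree on the overlap, they glue to $F\in\mathcal{M}(X)$ with $F|_{\mathcal{U}_\varGamma}=f$. Because the restriction $\mathcal{M}(X)\to\mathcal{M}(\mathcal{U}_\varGamma)$ is $\Gamma$-equivariant, $F$ is $\Gamma$-invariant on $X$ if and only if $f$ is $\Gamma$-invariant on $\mathcal{U}_\varGamma$, and this identifies $\mathcal{M}(Q_\varGamma)$ with the set of $\Gamma$-invariant rational functions on $X$.
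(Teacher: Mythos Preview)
Your overall framework (Cartan--Leray spectral sequence reducing to the vanishing \eqref{plan:key}) matches the paper, and your treatment of meromorphic functions is essentially correct. The problem lies in how you try to establish $H^{k}(\mathcal{U}_\varGamma,\mathcal{G})\simeq H^{k}(X,\mathcal{G})$. Your plan is to exhaust $\mathcal{U}_\varGamma$ by $\mathcal{U}_\varGamma^{(N)}$, then remove the finitely many tubes of $\mathcal{U}_\varGamma^{(N)}$ one at a time. But, as you yourself flag, at each intermediate stage the ambient open set is $X$ minus several disjoint tubes, and you need this to carry a $q_{0}$-complete exhaustion in order to apply Theorem~\ref{Thm:AndreottiGrauert}. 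Removing a compact set from a $q$-complete manifold does \emph{not} in general give a $q$-complete manifold (think of $\mathbb{C}^{n}\setminus\{0\}$), and your suggestion of using $-\log(\varphi\circ g^{-1})$ does not obviously produce one either: that function is an exhaustion of $X\setminus D$, not of $X$ minus several tubes. So the ``combinatorial bookkeeping'' you mention is a genuine gap, not just tedium.

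The paper avoids all of this with one observation: because of the important assumption on $\varphi$, the fundamental domain $\mathcal{F}_\varGamma$ contains a copy $C=f(C_{0})$ for some $f\in\Aut(X)$; in particular $C\subset\mathcal{U}_\varGamma$. Now apply Scheja once to remove $C$ from $\mathcal{U}_\varGamma$; then $\mathcal{U}_\varGamma\setminus C$ is an open subset of the $q_{0}$-complete manifold $X\setminus C$ whose complement there is the compact limit set $X\setminus\mathcal{U}_\varGamma$, so a single application of Andreotti--Grauert gives $H^{k}(X\setminus C)\simeq H^{k}(\mathcal{U}_\varGamma\setminus C)$; a second Scheja puts $C$ back into $X$. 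Three steps, no exhaustion of $\mathcal{U}_\varGamma$, no inverse limit, no intermediate $q_{0}$-completeness to verify. The same trick streamlines the meromorphic statement: $f\in\mathcal{M}(\mathcal{U}_\varGamma)$ restricts to $\mathcal{U}_\varGamma\setminus C$, extends by Theorem~\ref{Thm:MerkerPorten} to $X\setminus C$, and then to $X$ across the codimension-$\geq 2$ set $C$.
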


\begin{proof}
In the first step we will show $H^k(\mathcal{U}_\varGamma,\pi^*\mathcal{F}) 
\simeq H^k(X,\pi^*\mathcal{F})$ for $0\leq k\leq2$. To see this, note first that 
the fundamental domain $\mathcal{F}_\varGamma$ contains the submanifold 
$C=f(C_0)$ for some $f\in\Aut(X)$. This follows from the fact that the 
neighborhoods $U_j$ and $V_j$ are defined by the $K_0$-invariant function 
$\varphi$. Due to Scheja's Theorem~\ref{Thm:Scheja} the restriction map 
$H^k(\mathcal{U}_\varGamma,\pi^*\mathcal{F})\to 
H^k(\mathcal{U}_\varGamma\setminus C,\pi^*\mathcal{F})$ is bijective for $0\leq 
k\leq2$. Since $\mathcal{U}_\varGamma\setminus C$ is a domain in the 
$q$-complete manifold $\Omega=X\setminus C$ with compact complement, an 
application of Andreotti's and Grauert's Theorem~\ref{Thm:AndreottiGrauert} and 
Lemma~\ref{Lem:q-completeness} yields that the restriction map $H^k(X\setminus 
C,\pi^*\mathcal{F})\to H^k(\mathcal{U}_\varGamma\setminus C,\pi^*\mathcal{F})$ 
is also bijective for $0\leq k\leq2$. Another application of Scheja's 
Theorem~\ref{Thm:Scheja} gives the result.

Consequently we get $H^1(\mathcal{U}_\varGamma,\pi^*\mathcal{F})=0=H^2(
\mathcal{U}_\varGamma,\pi^*\mathcal{F})$. This allows us to apply~\cite[Appendix
to \S 2, formula (c)]{Mum} to obtain 
\begin{equation*}
H^k(Q_\varGamma,\mathcal{F})\simeq H^k
\bigl(\Gamma, H^0(\mathcal{U}_\varGamma,\pi^*\mathcal{F})\bigr)= H^k\bigl(
\Gamma, H^0(X,\pi^*\mathcal{F})\bigr)
\end{equation*}
for $k=0,1,2$.
\end{proof}

\begin{rem}
Let $X$ be any homogeneous rational manifold, let $\mathcal{F}$ be either
the structure sheaf $\mathcal{O}$ or the tangent sheaf $\Theta$. Then the
Bott-Borel-Weil theorem shows $H^k(X,\mathcal{F})=0$ for all $k\geq1$.
\end{rem}

\begin{rem}
For $X=\mbb{P}_3$ or $X=Q_3$ our method only yields $H^0(Q_\varGamma,
\mathcal{F})\simeq H^0(X,\pi^*\mathcal{F})^\Gamma$. In addition, for
$X=\mbb{P}_5,Q_5,Q_6$ we have $H^1(Q_\varGamma,\mathcal{F})\simeq
H^1\bigl(\Gamma,H^0(\pi^*\mathcal{F})\bigr)$.
\end{rem}

\section{Geometric properties and deformations of Schottky quotients}

In this section we apply Proposition~\ref{Prop:cohomology} in order to describe
analytic and geometric invariants as well as the deformation theory of Schottky
quotient manifolds. In the whole section $X$ will denote a homogeneous rational
manifold admitting a movable Schottky pair $(C_0,C_1)$ and $\Gamma$ an
associated Schottky group of rank $r\geq2$ with quotient $Q_\varGamma=
\mathcal{U}_\varGamma/\Gamma$.

\subsection{Analytic and geometric invariants}

The following proposition was shown in~\cite{La} for $X=\mbb{P}_n$ under an 
additional assumption on the Hausdorff dimension of $X\setminus 
\mathcal{U}_\varGamma$.

\begin{prop}\label{Prop:kodaira}
The quotient manifold $Q_\varGamma$ is rationally connected and has Kodaira
dimension $-\infty$. If $\codim C_0\geq2$, then $Q_\varGamma$ is not K\"ahler.
\end{prop}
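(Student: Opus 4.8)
The plan is to prove the three assertions in turn, using as the main engine the structure of $\mathcal U_\varGamma$ as an increasing union of open sets homotopy equivalent to a complement $X\setminus C$ (with $C$ a disjoint union of copies of $C_0\cup C_1$), together with Proposition~\ref{Prop:BasicProperties} and the fact that $X$ is a homogeneous rational manifold, hence itself rationally connected with $\kod X=-\infty$.

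For rational connectedness of $Q_\varGamma$: since $X$ is homogeneous rational, through any two points of $X$ there passes a rational curve, and in fact the fundamental domain $\mathcal F_\varGamma$ (being a compact set with nonempty interior inside $X$) meets plenty of rational curves lying entirely inside $\mathcal U_\varGamma$ — indeed any small deformation of a rational curve through two general points of $\mathring{\mathcal F_\varGamma}$ stays inside $\mathcal U_\varGamma$ because $\mathcal U_\varGamma$ is open and $\Gamma$-invariant. Pushing such curves down to $Q_\varGamma=\mathcal U_\varGamma/\Gamma$ shows that two general points of $Q_\varGamma$ lie on the image of a rational curve, which is the definition of rational connectedness. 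For the Kodaira dimension: first I would argue $\kod Q_\varGamma=-\infty$ is in fact a consequence of rational connectedness, but to keep the argument self-contained I would instead show $H^0(Q_\varGamma,K_{Q_\varGamma}^{\otimes m})=0$ for all $m\geq1$ directly. A pluricanonical section on $Q_\varGamma$ pulls back to a $\Gamma$-invariant pluricanonical section on $\mathcal U_\varGamma$; restricting to $\mathcal F_\varGamma\supset$ a nonempty open subset of $X$, and using that $X\setminus\mathcal U_\varGamma$ has positive codimension together with a Hartogs/Scheja-type extension (Theorem~\ref{Thm:Scheja}, applicable after the codimension reductions of Section~\ref{Section:MovableSchottky}), such a section would extend to a pluricanonical section on all of $X$; but $H^0(X,K_X^{\otimes m})=0$ by Bott--Borel--Weil since $X$ is homogeneous rational, a contradiction. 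Care is needed here since $X\setminus\mathcal U_\varGamma$ is only known to be small when $\codim C_j\geq2$; in the codimension-one cases one argues instead that $\kod Q_\varGamma\le\kod X=-\infty$ via the meromorphic dominant map coming from a rational curve family, or simply appeals to rational connectedness.

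For the non-K\"ahler statement under $\codim C_0\geq2$: the idea is to exhibit a compact analytic subset of $Q_\varGamma$ that is homologous to zero, which is impossible on a compact K\"ahler manifold (every complex submanifold has nonzero class, being calibrated by a power of the K\"ahler form). By the important assumption, $\mathcal F_\varGamma$ contains a copy $C=f(C_0)$ of the Schottky pair variety $C_0$, which descends to a compact complex submanifold $\bar C\subset Q_\varGamma$ of complex codimension $\geq 2$; on the other hand, two successive "levels" $C$ and $\gamma_1(C)$ inside $\mathcal U_\varGamma$ are connected by the region $\{\,\eps<\varphi<1-\eps\,\}$-type collar on which they cobound (they are the two ends of a product-like piece, since $g_\lambda$ sweeps $C_0$ to near $C_1$), so that in $Q_\varGamma$ the submanifold $\bar C$ bounds. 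More precisely, I would use that $\mathcal U_\varGamma$ retracts onto a "necklace" so that $[C]$ and $[\gamma_1 C]$ have the same image in $H_*(\mathcal U_\varGamma)$ while $\gamma_1$ acts trivially on that image, forcing $[\bar C]=0$ in $H_*(Q_\varGamma)$; since $\bar C$ has positive dimension (this is where $\codim C_0\geq 2$, equivalently $\dim C_0\ge 0$ and $C_0\ne$ the whole fiber, enters to guarantee $\bar C$ is a genuine positive-dimensional compact complex submanifold with a Hodge-theoretic obstruction), this contradicts K\"ahlerness. The main obstacle I anticipate is making the bounding/homology-triviality argument rigorous: one must track how $\Gamma$ acts on $H_*(\mathcal U_\varGamma)$ and verify that the class of the embedded Schottky variety is killed in the quotient, which requires the explicit Mayer--Vietoris description of $\mathcal U_\varGamma$ promised in Subsection~\ref{Subs:Picard} rather than soft arguments; alternatively, and perhaps more cleanly, one invokes that a compact K\"ahler manifold with free fundamental group of rank $\ge 2$ would violate standard restrictions on K\"ahler groups (e.g. $b_1$ is even, or the one-relator/freeness obstructions), since $\pi_1(Q_\varGamma)\cong\Gamma$ is free of rank $r\ge 2$ when $\codim C_j\ge 2$ — this is the shortest route and the one I would ultimately take, the codimension hypothesis being exactly what gives $\pi_1(Q_\varGamma)\cong\Gamma$ via Proposition~\ref{Prop:BasicProperties}.
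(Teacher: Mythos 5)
Your non-K\"ahler argument is, in its final form, exactly the paper's: $\codim C_j\geq2$ gives $\pi_1(Q_\varGamma)\simeq\Gamma$ free of rank $r\geq2$ via Proposition~\ref{Prop:BasicProperties}, and a free group of positive rank is not a K\"ahler group ($b_1$ even plus Nielsen--Schreier); the homological detour with $[\bar C]=0$ is unnecessary. For rational connectedness you are in the paper's spirit (rational curves inside $\mathcal{U}_\varGamma$ pushed down to $Q_\varGamma$), but your justification begs the question: a rational curve through two general points of $\mathring{\mathcal{F}_\varGamma}$ need not lie in $\mathcal{U}_\varGamma$ at all, and openness of $\mathcal{U}_\varGamma$ only protects curves already known to avoid the limit set. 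The paper instead uses the $K_0$-invariant choice of $\varphi$ (the important assumption) to produce enough rational curves \emph{inside the fundamental domain itself} and connects points by chains of such curves.

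The genuine gap is the Kodaira-dimension step. You propose to extend a $\Gamma$-invariant pluricanonical section from $\mathcal{U}_\varGamma$ to $X$ by a Hartogs/Scheja extension across $X\setminus\mathcal{U}_\varGamma$. But Theorem~\ref{Thm:Scheja} requires the removed set to be a closed \emph{analytic} subset, and the limit set $X\setminus\mathcal{U}_\varGamma$ is not analytic (it is a Cantor-like set of accumulation points, not a union of translates of $C_0\cup C_1$); nor is it known to have small Hausdorff codimension -- removing precisely that hypothesis, which L\'arusson needed for Shiffman-type extension, is one of the points of the paper. The paper's mechanism is different: by the important assumption the fundamental domain contains a copy $C=f(C_0)$ of the Schottky variety; $\mathcal{U}_\varGamma\setminus C$ is the complement of a \emph{compact} set in the $q$-complete manifold $X\setminus C$ (Lemma~\ref{Lem:q-completeness}), so Andreotti--Grauert (Theorem~\ref{Thm:AndreottiGrauert}) gives the extension to $X\setminus C$, and Scheja is used only to cross the analytic set $C$ on both sides; this is the $H^0$-part of Proposition~\ref{Prop:cohomology}, applied to $\mathcal{K}_{Q_\varGamma}^{\otimes m}$, after which $H^0(X,\mathcal{K}_X^{\otimes m})=0$ because $X$ is rational. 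Your fallbacks do not repair this: the implication ``rationally chain connected $\Rightarrow\kod=-\infty$'' is a theorem for projective manifolds via very free curves, machinery not available for the non-K\"ahler, non-Moishezon manifold $Q_\varGamma$, and there is no dominant meromorphic map from a compact manifold of Kodaira dimension $-\infty$ onto $Q_\varGamma$ to which one could apply functoriality of $\kod$.
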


\begin{proof}
The first claim follows again from the fact that we define the open 
neighborhoods $U_j$ and $V_j$ via a $K_0$-invariant function $\varphi\colon
X\to[0,1]$. In this situation we find enough rational curves in the fundamental
domain $\mathcal{F}_\varGamma$ so that we can connect any two points by a chain
of rational curves.

In order to show $\kod(Q_\varGamma)=-\infty$ we apply
Proposition~\ref{Prop:cohomology} to the canonical sheaf
$\mathcal{K}_{Q_\varGamma}$. Then the claim follows from $H^0(X,\mathcal{K}_X^{ 
\otimes m})=0$ for every $m\geq1$ since $X$ is rational.

The last claim is a consequence of the fact that the fundamental group of a 
compact K\"ahler manifold cannot be free. This can be seen as follows,
compare~\cite[Example~1.19]{ABCKT}. Since the rank of the free fundamental
group of a compact complex manifold $Y$ coincides with the first Betti number
of $Y$, we see that no compact K\"ahler manifold can have free fundamental of
odd rank. However, any free group of rank $r$ contains normal subgroups of
any finite index $k$ which are free of rank $k(r-1)+1$ due to the
Nielsen-Schreier theorem. If a compact K\"ahler manifold $Y$ had free
fundamental group of even rank, we could choose a normal subgroup of even index
$k$ and thus would obtain a finite covering of $Y$ having free fundamental
group of odd rank $k(r-1)+1$, a contradiction to the previous observation. This 
proves the last claim since $\mathcal{U}_\varGamma$ is simply-connected if 
$\codim C_0\geq2$.
\end{proof}

Next we give a criterion for the algebraic dimension of $Q_\varGamma$ to be
zero.

\begin{thm}\label{Thm:algdim}
The algebraic dimension $a(Q_\varGamma)$ coincides with the codimension of a
generic $H$-orbit in $X$ where $H$ denotes the Zariski closure of $\Gamma$ in
$\Aut(X)$. In particular, $a(Q_\varGamma)=0$ if and only of $H$ has an open
orbit in $X$.
\end{thm}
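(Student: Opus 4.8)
The plan is to identify meromorphic functions on $Q_\varGamma$ with $\Gamma$-invariant rational functions on $X$, via Proposition~\ref{Prop:cohomology}, and then to compute the transcendence degree of that field. First I would recall that $\mathcal{M}(Q_\varGamma)$ is identified with the field $\mathbb{C}(X)^\Gamma$ of $\Gamma$-invariant rational functions; since $\Gamma$ and its Zariski closure $H$ have the same invariant rational functions (a rational function is $\Gamma$-invariant iff it is $H$-invariant, as the invariance locus is Zariski-closed in $H$), we get $\mathcal{M}(Q_\varGamma)\cong\mathbb{C}(X)^H$. The algebraic dimension $a(Q_\varGamma)$ is by definition the transcendence degree of $\mathcal{M}(Q_\varGamma)$ over $\mathbb{C}$, so the statement reduces to the purely algebraic claim $\operatorname{trdeg}_{\mathbb{C}}\mathbb{C}(X)^H=\operatorname{codim}(H\cdot x)$ for $x\in X$ generic.

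The second step is to establish this codimension formula. By Rosenlicht's theorem on rational quotients, there is an $H$-invariant dense open subset $U\subseteq X$ and a morphism $\pi\colon U\to Y$ onto a variety $Y$ whose fibres are exactly the $H$-orbits in $U$ and which induces an isomorphism $\mathbb{C}(Y)\xrightarrow{\sim}\mathbb{C}(X)^H$; hence $\operatorname{trdeg}_{\mathbb{C}}\mathbb{C}(X)^H=\dim Y=\dim X-\dim(H\cdot x)=\operatorname{codim}(H\cdot x)$ for generic $x$. I would phrase this either by citing Rosenlicht directly or by the more elementary observation that a transcendence basis of $\mathbb{C}(X)^H$ consists of $d-\dim(H\cdot x)$ functions whose common fibres through a generic point are finite unions of generic $H$-orbit closures, so that the transcendence degree is exactly the codimension of the generic orbit. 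The ``in particular'' is then immediate: $a(Q_\varGamma)=0$ exactly when $\operatorname{codim}(H\cdot x)=0$, i.e. when $H$ has a Zariski-open (equivalently, dense) orbit in the irreducible variety $X$.

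One point that needs care, and which I expect to be the main obstacle, is the passage from $\mathbb{C}(X)^\Gamma$ to $\mathbb{C}(X)^H$. A priori $\Gamma$-invariance is weaker than $H$-invariance, so one must argue that every $\Gamma$-invariant rational function on $X$ is automatically $H$-invariant. This follows because for a fixed rational function $f\in\mathbb{C}(X)$ the set $\{g\in\Aut(X) : g^*f=f\}$ is Zariski-closed in $\Aut(X)$ (it is cut out by the vanishing of $g^*f-f$, a rational condition on $g$), and it contains $\Gamma$ by hypothesis; since $H$ is by definition the Zariski closure of $\Gamma$, this closed set contains $H$, whence $f$ is $H$-invariant. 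The reverse inclusion $\mathbb{C}(X)^H\subseteq\mathbb{C}(X)^\Gamma$ is trivial, so equality holds. Conversely one should note that $H$ acts on $X$ with the same orbit dimensions as its action determines, and the generic $H$-orbit has well-defined codimension since $X$ is irreducible; no connectedness of $H$ is needed because passing to the identity component changes neither the generic orbit dimension nor the field of invariant rational functions up to a finite extension, which does not affect transcendence degree. Assembling these observations gives the theorem.
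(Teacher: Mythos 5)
Your proposal is correct and follows essentially the same route as the paper: identify $\mathcal{M}(Q_\varGamma)$ with the $\Gamma$-invariant rational functions on $X$ (the paper invokes the Merker--Porten extension theorem together with Lemma~\ref{Lem:q-completeness} directly, which is the same extension mechanism underlying the ``moreover'' clause of Proposition~\ref{Prop:cohomology}), observe that $\Gamma$-invariance forces invariance under the Zariski closure $H$, and conclude via Rosenlicht's theorem that the transcendence degree equals the codimension of a generic $H$-orbit. Your extra remarks on the Zariski-closedness of the stabilizer of $f$ and on connectedness of $H$ are just fuller detail of the same argument.
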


\begin{proof}
Due to Theorem~\ref{Thm:MerkerPorten} and Lemma~\ref{Lem:q-completeness} every 
meromorphic function on $Q_\varGamma$ is induced by a $\Gamma$-invariant 
rational function $f$ on $X$. Consequently, $f$ must be invariant under the 
Zariski closure $H$ of $\Gamma$ in $\Aut(X)$. It follows from Rosenlicht's 
theorem~\cite[Theorem~2]{Ro} that the field of $H$-invariant rational functions 
on $X$ has transcendence degree equal to the codimension of a generic $H$-orbit 
in $X$.
\end{proof}

\begin{rem}
It is not difficult to produce examples of Schottky groups $\Gamma$ acting on
$\mbb{P}_{2n+1}$ such that $a(Q_\varGamma)=0$: choose $r=2n+1$ pairwise
disjoint Schottky pairs such that in some point of $\mbb{P}_{2n+1}$ the
corresponding $\mbb{C}^*$-orbits meet transversally.
\end{rem}

It is shown in~\cite[Proposition~2.1]{La} that Nori's Schottky groups acting on
$X=\mbb{P}_3$ yield quotient manifolds of algebraic dimension zero, provided
that the Hausdorff dimension of their limit set is sufficiently small. It can
be deduced from Theorem~\ref{Thm:algdim} that this assumption on
the Hausdorff dimension is superfluous. In~\cite[Proposition~9.3.12]{CNS} and 
\cite[Proposition~3.5]{SV} the same result is claimed to hold for Schottky 
groups acting on $X=\mbb{P}_{2n+1}$. This, however, is not correct as the 
following example shows.

\begin{ex}\label{Ex:posalgdim1}
Let us fix two integers $k\geq1$ and $n\geq2k+1$. Applying Nori's construction
to $X=\mbb{P}(\mbb{C}^{(2k)\times n})\simeq\mbb{P}_{2kn-1}$ gives the Schottky
pair
\begin{align*}
C_0&:=\bigl\{[Z]\in X;\ z_{ij}=0\text{ for all $k+1\leq i\leq 2k$}\bigr\}\simeq
\mbb{P}_{kn-1}\text{ and}\\
C_1&:=\bigl\{[Z]\in X;\ z_{ij}=0\text{ for all $1\leq i\leq k$}\bigr\}\simeq
\mbb{P}_{kn-1}.
\end{align*}
The corresponding $\mbb{C}^*$-action is given by $g_\lambda\in\Aut(X)$,
\begin{equation*}
g_\lambda[Z]=g_\lambda\left[\vct{Z_0}{Z_1}\right]:=\left[\vct{Z_0}{\lambda 
Z_1}\right]
\end{equation*}
where $Z_0,Z_1\in\mbb{C}^{k\times n}$. Let the group $H={\rm{SL}}(2k,\mbb{C})$
act on $X$ by left multiplication. We have $g_\lambda\in H$ for all $\lambda\in
\mbb{C}^*$. Moreover, a direct calculation shows that the Schottky pair
$(C_0,C_1)$ can be moved by elements of $H$. Consequently, there are Schottky
groups $\Gamma$ acting on $X$ with $\Gamma\subset H$.

Due to the First Fundamental Theorem (see e.g.~\cite[p.~387]{Pro}) for $H$ the
invariant ring $\mbb{C}[\mbb{C}^{(2k)\times n}]^H$ is generated by the
$\vct{n}{2k}$ homogeneous $(2k)\times(2k)$ minors of $A$. Since $n\geq 2k+1$,
there are non-constant $H$-invariant rational functions on $X$. Thus the
algebraic dimension of $Q_\varGamma$ is bounded from below by $2k(n-2k)$ for
every Schottky group $\Gamma\subset H$.

Note that $C_0$ and $C_1$ are contained in the $H$-invariant subvariety
$\ol{Y_k}$ where $Y_k:=\bigl\{[Z]\in X;\ \rk(Z)=k\bigr\}$. Therefore we can
view $\ol{Y_k}$ as another projective variety (of dimension $k(k+n)-1$) which
admits actions of Schottky groups. For $k\geq2$ the projective variety
$\ol{Y_k}$ is singular and $C_0$ and $C_1$ meet its singular set $\ol{Y_k}
\setminus Y_k$. In contrast, $Y_1=\ol{Y_1}$ is $H$-equivariantly isomorphic to
$\mbb{P}_1\times\mbb{P}_{n-1}$ where $H={\rm{SL}}(2,\mbb{C})$ acts on
$\mbb{P}_1\times \mbb{P}_{n-1}$ by $h\cdot\bigl([v],[z]\bigr):=
\bigl([hv],[z]\bigr)$. Hence, the Schottky groups acting on $Y_1$ are obtained
as products of Schottky groups acting on $\mbb{P}_1$ and the trivial action on
$\mbb{P}_{n-1}$.
\end{ex}

In the next example we study in detail a fiber of the algebraic reduction map
obtained in the setting of Example~\ref{Ex:posalgdim1}.

\begin{ex}\label{Ex:SchottkyCompactifications}
Take now in the setting of the previous example $n=2k$. In this case the
natural {\it right} action of ${\rm{SL}}(2k,\mbb{C})$ on
$X=\mbb{P}(\mbb{C}^{(2k)\times (2k)})\simeq\mbb{P}_{4k^{2}-1}$ commutes with the
left action and therefore descends to an action on $Q_\varGamma$ which is
constructed as the quotient of the {\it left} action of $\Gamma$. In
particular, the limit set $X\setminus\mathcal{U}_\varGamma$ must be
contained in the complement of the open ${\rm{SL}}(2k,\mbb{C})$-orbit. Hence
$Q_\varGamma$ is an almost homogeneous ${\rm{SL}}(2k,\mbb{C})$-manifold with
open orbit $\Gamma\backslash{\rm{SL}}(2k,\mbb{C})$.

This gives in the case $k=1$ interesting ${\rm{SL}}(2,\mbb{C})$-equivariant
compactifications of $\Gamma\backslash{\rm{SL}}(2,\mbb{C})$ that have previously 
been found by A.~Guillot~\cite{G}. We describe them here 
in a little more detail.

Consider $X=\mbb{P}_{3}\simeq \mbb{P}(\mbb{C}^{2\times 2})$ as an
almost-homogeneous complex manifold under the action of the complex Lie group
${\rm{SL}}(2,\mbb{C}) \times {\rm{SL}}(2,\mbb{C})$, given by left and right
matrix multiplication. Then $X={\rm{PSL}}(2,\mbb{C}) \cup D$, where $D$ is the
$1$-codimensional orbit isomorphic to $\mbb{P}_{1}\times \mbb{P}_{1}$. As above
construct now a (left) Schottky group action on $X$ of $\Gamma \subset
{\rm{SL}}(2,\mbb{C})$. Restricted to $D$ this action is trivial on one
$\mbb{P}_{1}$-factor and just a classical Schottky action on the other.
Therefore the quotient manifold $Q_\varGamma$ is an equivariant holomorphic
compactification of $\Gamma\setminus{\rm{SL}}(2,\mbb{C})$ by a hypersurface $D'
\simeq R \times\mbb{P}_{1}$, where $R$ is the compact Riemann surface associated
to the classical Schottky action of $\Gamma$ on $\mbb{P}_{1}$. By a theorem of
Maskit (\cite{Ma}), it follows that for every finitely generated free, discrete
and loxodromic subgroup $\Gamma \subset {\rm{SL}}(2,\mbb{C})$ the above
construction gives such an equivariant compactification.
\end{ex}

A similar construction is also possible for the case of Schottky group actions 
on quadrics as well as on isotropic Gra{\ss}mannians as the following two 
examples show.

\begin{ex}\label{Ex:posalgdim2}
As in Subsection~\ref{Subsection:QuadricExample} we equip $\mbb{C}^{4k}$ with 
the symmetric bilinear form $b(z,w):=z^tSw$ where $S=\left( 
\begin{smallmatrix}0&I_{2k}\\I_{2k}&0\end{smallmatrix}\right)$. Let $H$ be the 
group of linear isometries of $b$ and note that $H\simeq{\rm{SO}}(4k,\mbb{C})$. 
On $\mbb{C}^{4k\times m}$ for $m\geq1$ we define a
symmetric bilinear form $B$ by the formula $B(Z,W):=\Tr(Z^tSW)$. Then any two 
different columns of $Z\in\mbb{C}^{4k\times m}$ are orthogonal with respect to 
$B$ and, when restricted to one column, $B$ coincides with $b$. The group 
$H\times{\rm{SO}}(m,\mbb{C})$ acts on $\mbb{C}^{4k\times m}$ by left and right 
multiplication and this action leaves $B$ invariant. Consequently, 
$H\times{\rm{SO}}(m,\mbb{C})$ acts on the $(4k-2)$-dimensional quadric
\begin{equation*}
X:=\bigl\{[Z]\in\mbb{P}(\mbb{C}^{4k\times m});\ \Tr(Z^tSZ)=0\bigr\}
\end{equation*}
in $\mbb{P}(\mbb{C}^{4k\times m})\simeq\mbb{P}_{4km-1}$. In contrast with the 
previous example, ${\rm{SO}}(m,\mbb{C})$ does not have an open orbit in $X$.

An argument analogous to the one given in 
Subsection~\ref{Subsection:QuadricExample} shows that the Schottky pair
\begin{align*}
C_0&=\left\{\left[
\begin{pmatrix}
Z_0\\0
\end{pmatrix}
\right]\in X;\ Z_0\in\mbb{C}^{2k\times m}\right\}\simeq\mbb{P}_{2km-1}\\
C_1&=\left\{\left[
\begin{pmatrix}
0\\Z_1
\end{pmatrix}
\right]\in X;\ Z_1\in\mbb{C}^{2k\times m}\right\}\simeq\mbb{P}_{2km-1}
\end{align*}
in $X$ is movable by elements of the group $H$. Therefore there are Schottky 
groups $\Gamma$ acting on $X$ with Zariski closure $\ol{\Gamma}$ contained in 
$H$.

Due to the First Fundamental Theorem for ${\rm{SO}}(4k,\mbb{C})$ the algebra of 
$H$-invariant polynomials on $\mbb{C}^{4k\times m}$ is generated by
\begin{equation*}
p_{ij}(Z)=b(z_i,z_j)\quad\text{for }m\geq1, 1\leq i\leq j\leq m,
\end{equation*}
where $z_1,\dotsc,z_m$ are the columns of $Z\in\mbb{C}^{4k\times m}$, and by
\begin{equation*}
d_{i_1\dotsb i_{4k}}(Z)=\det(z_{i_1}\dotsb z_{i_{4k}})\quad\text{for } m\geq 
4k, 1\leq i_1<i_2<\dotsb<i_{4k}\leq m.
\end{equation*}
Hence, for each $m\geq2$ and every Schottky group $\Gamma\subset H$ the 
quotient manifold $Q_\varGamma$ has positive algebraic dimension.

Concretely, suppose that $k=1$ and $m=2$ and let $\Gamma$ be Zariski dense in 
$H$. Then we have
\begin{equation*}
X=\bigl\{\bigl[(z_1\ z_2)\bigr]\in\mbb{P}(\mbb{C}^{4\times2});\ b(z_1,z_1) 
+ b(z_2,z_2)=0\bigr\}.
\end{equation*}
The algebraic reduction map of $Q_\varGamma$ is induced by the rational mapping
\begin{equation*}
X\dasharrow\mbb{P}_2,[Z]\mapsto\bigl[b(z_1,z_1):b(z_1,z_2):b(z_2,z_2)\bigr],
\end{equation*}
whose image is contained in $\bigl\{[x_0:x_1:x_2]\in\mbb{P}_2;\ x_0=-x_2 
\bigr\}\simeq\mbb{P}_1$. Note that this reduction map is 
${\rm{SO}}(2,\mbb{C})$-equivariant.
\end{ex}

\begin{ex}\label{Ex:Posalgdim3}
Here we construct Schottky groups acting on $Z_{n}={\rm{IGr}}_n 
(\mbb{C}^{2n+1})$ with sufficiently small Zariski closures in
${\rm{SO}}(2n+2,\mbb{C})$ in order to have $\Gamma$-invariant non-constant
meromorphic functions. Then these functions induce meromorphic functions on the
associated quotient manifolds which will be hence of strictly positive algebraic
dimension, compare Theorem~\ref{Thm:algdim}.

To this end we use the twistor fibration, see Remark~\ref{Rem:Twistor}. Let
$M:=S^{m}$ be a round sphere in $S^{2n}$. Its stabilizer is a subgroup of 
${\rm{SO}}(1,2n+1)$ isomorphic to $\textrm{M\"ob}_{+}(S^{m})\simeq
{\rm{SO}}(1,m+1)$. Construct a Schottky group action on $S^{2n}$ by allowing
the pairs of points $(p,q)$ to move only in $M$. Then the lifted Schottky group
has a Zariski closure $H$ contained in a subgroup of ${\rm{SO}}(2n+2,\mbb{C})$
isomorphic to ${\rm{SO}}(m+2,\mbb{C})$. Finally, if $m$ is sufficiently small,
there are $H$-invariant meromorphic functions on $X_{n}$ and the quotient
manifold has strictly positive algebraic dimension.
\end{ex}

\subsection{The Picard group of $Q_\varGamma$}\label{Subs:Picard}

Let $X$ be a homogeneous rational manifold verifying the hypotheses of 
Proposition~\ref{Prop:cohomology}. Applying this proposition to the structure 
sheaf $\mathcal{F}= \mathcal{O}$ we obtain $H^1(Q_\varGamma,\mathcal{O})\simeq 
H^1(\Gamma,\mbb{C})\simeq \Hom(\Gamma_\text{ab},\mbb{C})\simeq\mbb{C}^r$, 
see~\cite[p.~193]{HS}, as well as $H^2(Q_\varGamma,\mathcal{O})\simeq 
H^2(\Gamma,\mbb{C})=0$, see~\cite[Corollary~VI.5.6]{HS}. Hence, the long exact 
cohomology sequence associated with the exponential sequence $0\to\mbb{Z}\to 
\mathcal{O}\to\mathcal{O}^*\to0$ yields
\begin{equation*}
\underset{\simeq \mbb{Z}^r}{H^1(Q_\varGamma,\mbb{Z})}\hookrightarrow 
\underset{\simeq \mbb{C}^r}{H^1(Q_\varGamma,\mathcal{O})}\to 
H^1(Q_\varGamma,\mathcal{O}^*)\to H^2(Q_\varGamma,\mbb{Z})\to0.
\end{equation*}
In order to obtain the Picard group $H^1(Q_\varGamma,\mathcal{O}^*)$ we have 
to determine $H^2(Q_\varGamma,\mbb{Z})$.

\begin{rem}
The subgroup $H^1(Q_\varGamma,\mathcal{O})/H^1(Q_\varGamma,\mbb{Z})\simeq 
(\mbb{C}^*)^r$ of the Picard group of $Q_\varGamma$ consists of the 
topologically trivial line bundles on $Q_\varGamma$, given by representations 
of $\Gamma$ in $\mbb{C}^*$.
\end{rem}

In a first step we will determine $H_2(\mathcal{U}_\varGamma):= 
H_2(\mathcal{U}_\varGamma,\mbb{Z})$. For this, we note that 
$\mathcal{U}_\varGamma$ is the increasing union of the open sets
\begin{equation*}
\Omega_l:=X\setminus
\left(\bigcup_{j=1}^r\bigcup_{\underset{\gamma_{j_l}\not=\gamma_j}
{\gamma\in\Gamma_l}}\gamma(\ol{U}_j)\right)\cup
\left(\bigcup_{j=1}^r\bigcup_{\underset{\gamma_{j_l}\not=\gamma_j^{-1}}
{\gamma\in\Gamma_l}}\gamma(\ol{V}_j)\right)
\end{equation*}
where $\Gamma_l$ denotes the set of all reduced words of length $l\geq1$ in 
$\Gamma$. Since $\Omega_l$ is homotopy equivalent to $X\setminus C$ 
where $C$ is the union of $N_l=2r(2r-1)^{l-1}$ pairwise disjoint copies of 
$C_0$, we have $H_k(\Omega_l)\cong H_k(X\setminus C)$. Let $U$ be a tubular 
neighborhood of $C$ having $N_l$ connected components homeomorphic to 
$C_0\times B$ where $B$ is the unit ball in 
$\mbb{R}^{\dim_\mbb{R}X-\dim_\mbb{R}C_0}$. Then the Mayer-Vietoris sequence of 
the open cover $X=U\cup(X\setminus C)$ with $U\cap(X\setminus C)=U\setminus C$ 
reads
\begin{align*}
\dotsb&\to H_{k+1}(X)\to H_k(U\setminus C)\to H_k(U)\oplus H_k(X\setminus C)\to 
\\ &\to H_k(X)\to H_{k-1}(U\setminus C)\to\dotsb\to H_0(X)\to0.
\end{align*}
Note that $U$ is homotopy equivalent to the disjoint union of $N_l$ copies of 
$C_0$ while $U\setminus C$ is homotopy equivalent to the disjoint union of 
$N_l$ copies of $C_0\times S^{\dim_\mbb{R}X-\dim_\mbb{R}C_0-1}$ where $S^d$ is 
the unit sphere in $\mbb{R}^{d+1}$. Since $C_0$ is homogeneous rational, its 
homology $H_*(C_0)$ is free Abelian. Therefore the K\"unneth formula yields
\begin{equation*}
H_k(C_0\times S^d)\simeq \bigoplus_{j=0}^k H_j(C_0)\otimes H_{k-j}(S^d).
\end{equation*}

Consider the case $k=2$ and suppose that $d\geq3$. Then we have $H_2(C_0\times 
S^d)\simeq H_2(C_0)$. Furthermore, the Mayer-Vietoris sequence starting at 
$H_3(X)=0$ looks like
\begin{equation*}
0\to\underset{\simeq H_2(C_0)^{N_l}}{H_2(U\setminus C)}\to
\underset{\simeq H_2(C_0)^{N_l}}{H_2(U)}\oplus H_2(X\setminus C)\to 
H_2(X)\to H_1(U\setminus C)=0. 
\end{equation*}
From this we obtain $H_2(\Omega_l)\simeq H_2(X\setminus C)\simeq H_2(X)$ for 
all $l\geq1$. Since every singular chain in $\mathcal{U}_\varGamma$ lies in 
$\Omega_l$ for some $l\geq1$, we conclude $H_2(\mathcal{U}_\varGamma) 
\simeq H_2(X)$.

In order to deduce $H_2(Q_\varGamma,\mbb{Z})$, we will use in the second step 
the Cartan-Leray spectral sequence. More precisely, there exists a first 
quadrant spectral sequence of homology type with
\begin{equation*}
E^{p,q}_2\simeq H_p\bigl(\Gamma,H_q(\mathcal{U}_\varGamma)\bigr)
\end{equation*}
and strongly converging to $H_*(Q_\varGamma)$, 
see~\cite[Theorem~8\textsuperscript{bis}.9]{McCl}.

Since $\Gamma$ is free, we have $E^{p,q}_2=0$ for $p\geq2$, 
see~\cite[Corollary~VI.5.6]{HS}. Since the differential $d_2$ is of bidegree 
$(-2,1)$ we obtain
\begin{equation*}
\xymatrix{
{*} & {*} & 0 & & 0\\
H_0\bigl(\Gamma,H_2(X)\bigr) & H_1\bigl(\Gamma,H_2(X)\bigr) & 0\ar[llu] & &
0\ar[lllu]\\
0 & 0 & 0\ar[llu] & & 0\ar[lllu]\\
H_0(\Gamma,\mbb{Z}) & H_1(\Gamma,\mbb{Z}) & 0\ar[llu] & & 0.\ar[lllu]\\
}
\end{equation*}
Consequently, this spectral sequence collapses at $r=2$ and we have 
$E_2^{p,q}=E_{\infty}^{p,q}$ for all $p,q$. In other words, there is an 
increasing filtration $F^*$ on $H_*(Q_\varGamma)$ such that
\begin{align*}
0=E_2^{2,0}&\simeq F^2H_2(Q_\varGamma)/F^1H_2(Q_\varGamma)\\
0=E_2^{1,1}&\simeq F^1H_2(Q_\varGamma)/F^0H_2(Q_\varGamma)\\
E_2^{0,2}&\simeq F^0H_2(Q_\varGamma).
\end{align*}
Since $\Gamma$ is contained in a connected complex Lie group, the induced 
action of $\Gamma$ on $H_q(\mathcal{U}_\varGamma)$ is trivial. Hence, we have 
$E_2^{0,2}\simeq H_0\bigl(\Gamma,H_2(X)\bigr)\simeq H_2(X)$, 
see~\cite[Proposition~VI.3.1]{HS}.

In summary, we have shown $H_2(Q_\varGamma)\simeq H_2(X)$. It follows 
from~\cite[Theorem~3.2.20]{CS} that $H_2(X)\simeq\mbb{Z}$ for every homogeneous 
rational manifold verifying the hypotheses of 
Proposition~\ref{Prop:cohomology}. From this we obtain the following.

\begin{thm}\label{Thm:Picard}
Let $X$ be a homogeneous rational manifold verifying the hypotheses of 
Proposition~\ref{Prop:cohomology}. Let $\Gamma$ be a Schottky group of rank $r$ 
acting on $X$ with associated quotient $\pi\colon\mathcal{U}_\varGamma\to 
Q_\varGamma$. Then the Picard group $H^1(Q_\varGamma,\mathcal{O}^*)$ of 
$Q_\varGamma$ is isomorphic to $(\mbb{C}^*)^r\times\mbb{Z}$.
\end{thm}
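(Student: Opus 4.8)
\textbf{Proof plan for Theorem~\ref{Thm:Picard}.} The strategy is to combine the exponential-sequence computation already carried out in the text with the identification $H^2(Q_\varGamma,\mbb{Z})\simeq\mbb{Z}$, which is the only missing ingredient. First I would recall what has been established above: applying Proposition~\ref{Prop:cohomology} to $\mathcal{F}=\mathcal{O}$ gives $H^1(Q_\varGamma,\mathcal{O})\simeq\mbb{C}^r$ and $H^2(Q_\varGamma,\mathcal{O})=0$, so the long exact sequence of $0\to\mbb{Z}\to\mathcal{O}\to\mathcal{O}^*\to0$ reduces to the four-term exact sequence
\begin{equation*}
0\to H^1(Q_\varGamma,\mbb{Z})\to H^1(Q_\varGamma,\mathcal{O})\to \Pic(Q_\varGamma)\to H^2(Q_\varGamma,\mbb{Z})\to0,
\end{equation*}
where the last map is surjective because $H^2(Q_\varGamma,\mathcal{O})=0$, and where I have used that the map $\mbb{Z}^r\cong H^1(Q_\varGamma,\mbb{Z})\to H^1(Q_\varGamma,\mathcal{O})\cong\mbb{C}^r$ is injective (it is the natural inclusion of the lattice of characters $\Gamma\to\mbb{Z}$ into those valued in $\mbb{C}$, since both $H^k(Q_\varGamma,-)$ are identified with $H^k(\Gamma,-)$ for the constant coefficients $\mbb{Z}\subset\mbb{C}$). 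Consequently the cokernel $H^1(Q_\varGamma,\mathcal{O})/H^1(Q_\varGamma,\mbb{Z})\cong\mbb{C}^r/\mbb{Z}^r\cong(\mbb{C}^*)^r$ injects into $\Pic(Q_\varGamma)$ with quotient $H^2(Q_\varGamma,\mbb{Z})$, and the remark preceding the theorem already identifies this $(\mbb{C}^*)^r$ with the subgroup of topologically trivial line bundles.

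The heart of the proof is therefore the computation $H^2(Q_\varGamma,\mbb{Z})\simeq\mbb{Z}$. For this I would invoke the preceding paragraphs of Subsection~\ref{Subs:Picard} verbatim: using that $\mathcal{U}_\varGamma$ is the increasing union of the $\Omega_l$, each homotopy equivalent to $X\setminus C$ with $C$ a disjoint union of $N_l$ copies of $C_0$ (here $\codim_\mbb{R}C_0=d\geq3$ since $\codim_\mbb{C}C_j\geq2$ for all the manifolds listed in Proposition~\ref{Prop:cohomology}), a Mayer-Vietoris argument together with the Künneth formula gives $H_2(\Omega_l)\simeq H_2(X)$ for all $l$, hence $H_2(\mathcal{U}_\varGamma)\simeq H_2(X)$. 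The Cartan-Leray spectral sequence for the free action of $\Gamma$ on $\mathcal{U}_\varGamma$ then collapses (since $\Gamma$ free forces $E^{p,q}_2=0$ for $p\geq2$, and the surviving $d_2$ differential vanishes by bidegree reasons), yielding $H_2(Q_\varGamma)\simeq H_0(\Gamma,H_2(\mathcal{U}_\varGamma))\simeq H_2(X)$, where triviality of the $\Gamma$-action on $H_*(\mathcal{U}_\varGamma)$ comes from $\Gamma$ sitting in a connected group. Finally $H_2(X)\simeq\mbb{Z}$ for each homogeneous rational manifold in our list by~\cite[Theorem~3.2.20]{CS}. To pass from $H_2$ to $H^2(Q_\varGamma,\mbb{Z})$ I would note that $Q_\varGamma$ is a compact manifold with $H_1(Q_\varGamma,\mbb{Z})\simeq\Gamma_{\mathrm{ab}}\simeq\mbb{Z}^r$ free and $H_2(Q_\varGamma,\mbb{Z})\simeq\mbb{Z}$ free, so the universal coefficient theorem gives $H^2(Q_\varGamma,\mbb{Z})\simeq\Hom(H_2(Q_\varGamma,\mbb{Z}),\mbb{Z})\oplus\Ext(H_1(Q_\varGamma,\mbb{Z}),\mbb{Z})\simeq\mbb{Z}\oplus0\simeq\mbb{Z}$.

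Putting the pieces together: the four-term sequence becomes $0\to(\mbb{C}^*)^r\to\Pic(Q_\varGamma)\to\mbb{Z}\to0$, an extension of $\mbb{Z}$ by $(\mbb{C}^*)^r$. Since $\mbb{Z}$ is free, this sequence splits, and therefore $\Pic(Q_\varGamma)\simeq(\mbb{C}^*)^r\times\mbb{Z}$, as claimed.

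\medskip

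I expect the main obstacle to be making the topological computation fully rigorous rather than merely plausible — specifically, verifying that $\Omega_l\simeq X\setminus C$ (which relies on the important assumption that the Schottky neighborhoods are defined via the $K_0$-invariant function $\varphi$, so that the relevant deformation retractions exist and respect the $\Gamma$-action), and checking the bidegree bookkeeping for the Cartan-Leray differential $d_2\colon E^{2,1}_2\to E^{0,2}_2$, which must land in a group that is zero or be killed for purely formal reasons. The hypothesis $\codim_\mbb{C}C_0\geq 2$, i.e. $d\geq 3$, is used twice — once to kill $H_1(U\setminus C)$ in Mayer-Vietoris so that $H_2(X\setminus C)\simeq H_2(X)$ exactly, and implicitly to guarantee $\mathcal{U}_\varGamma$ is simply connected so that $\pi_1(Q_\varGamma)\simeq\Gamma$ and the Cartan-Leray setup applies cleanly. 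Everything else is bookkeeping with exact sequences that the paper has already set up.
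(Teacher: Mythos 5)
Your proposal is correct and follows essentially the same route as the paper: the exponential sequence with $H^1(Q_\varGamma,\mathcal{O})\simeq\mbb{C}^r$, $H^2(Q_\varGamma,\mathcal{O})=0$, the Mayer--Vietoris computation $H_2(\mathcal{U}_\varGamma)\simeq H_2(X)$, the Cartan--Leray collapse giving $H_2(Q_\varGamma)\simeq H_2(X)\simeq\mbb{Z}$, and the splitting over $\mbb{Z}$. The only difference is that you make explicit two small steps the paper leaves implicit, namely the passage from $H_2(Q_\varGamma)$ to $H^2(Q_\varGamma,\mbb{Z})$ via universal coefficients (using that $H_1(Q_\varGamma,\mbb{Z})\simeq\mbb{Z}^r$ is free) and the splitting of the resulting extension, both of which are fine.
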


\begin{rem}
A generator of $H_2(X)$ and hence of $H_2(Q_\varGamma)$ is given by the 
hyperplane bundle of $X$. Note that this line bundle descends to $Q_\varGamma$ 
since the action of $\Aut(X)^0$ on $\Pic(X)$ is trivial.
\end{rem}

\subsection{Deformation theory of $Q_\varGamma$}

It is possible to embed a Schottky quotient manifold $Q_\varGamma$ into a
complex analytic family in the following way. Fix a movable Schottky pair
$(C_0,C_1)$ in $X$ and automorphisms $f_2,\dotsc,f_r\in\Aut(X)$ such that
$C_0,C_1,f_2(C_0),f_2(C_1),\dotsc,f_r(C_0),f_r(C_1)$ are pairwise disjoint. 
Then, as above, we may choose elements $\lambda_1,\dotsc,\lambda_r\in\mbb{C}^*$ 
with $\abs{\lambda_j}=\frac{1-\eps_j}{\eps_j}>1$. Let $\mathcal{D}\subset
\mbb{C}^r$ be the domain of all possible such $\lambda:=(\lambda_1,\dotsc,
\lambda_r)$. Set $f_1:=\id_X$ and write $\Gamma(\lambda)$ for the Schottky 
group generated by $\gamma_j:=f_j\circ g_{\lambda_j}\circ f_j^{-1}$ for $1\leq 
j\leq r$. Let $F_r=\langle s_1,\dotsc,s_r\rangle$ be the abstract free group of 
rank $r$. We have an action of the free group $F_r$ of rank $r$ on 
$X\times\mathcal{D}$ given by the formula
\begin{equation*}
s_j\cdot(x,\lambda):=\bigl(f_j\circ g_{\lambda_j}\circ 
f_j^{-1}(x),\lambda\bigr).
\end{equation*}
Let $U_j(\lambda)$ and $V_j(\lambda)$ be the open neighborhoods of $f_j(C_0)$ 
and $f_j(C_1)$, respectively, defined for the $\Gamma(\lambda)$-action on $X$.
Then set $\wh{U}_j:=\bigl\{(x,\lambda)\in X\times\mathcal{D};\ x\in 
U_j(\lambda)\bigr\}$ and similarly $\wh{V}_j$ for $1\leq j\leq r$. In the same 
way we define $\wh{\mathcal{F}}$ and $\wh{\mathcal{U}}$ in $X\times\mathcal{D}$.

\begin{prop}
The free group $F_r$ acts freely and properly on $\wh{\mathcal{U}}$ so that we
obtain the commutative diagram
\begin{equation*}
\xymatrix{
\wh{\mathcal{U}}\ar[d]\ar[r] & \wh{\mathcal{U}}/F_r\ar[d]^{\pi}\\
\mathcal{D}\ar[r] & \mathcal{D}.
}
\end{equation*}
The map $\pi\colon\wh{\mathcal{U}}/F_r\to\mathcal{D}$ is a complex analytic 
family in the sense of Kodaira with fibers $Q_{\varGamma(\lambda)}$. In 
particular, all Schottky quotient manifolds are diffeomorphic.
\end{prop}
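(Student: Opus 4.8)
The plan is to verify the three assertions of the proposition in turn: that $F_r$ acts freely and properly on $\wh{\mathcal{U}}$, that the quotient $\pi\colon\wh{\mathcal{U}}/F_r\to\mathcal{D}$ is a complex analytic family of compact complex manifolds in the sense of Kodaira, and that consequently all the fibers $Q_{\varGamma(\lambda)}$ are diffeomorphic. The starting point is the observation that for each fixed $\lambda\in\mathcal{D}$ the restriction of the $F_r$-action to the slice $X\times\{\lambda\}$ is precisely the $\Gamma(\lambda)$-action on $X$, with $F_r\to\Gamma(\lambda)$ the defining surjection; by Proposition~\ref{Prop:BasicProperties} this action is free and the fundamental domain $\mathcal{F}_{\varGamma(\lambda)}$ exists. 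Since the construction of $\wh{\mathcal{U}}$, $\wh{U}_j$, $\wh{V}_j$, $\wh{\mathcal{F}}$ is performed fiberwise over $\mathcal{D}$, the same combinatorial reasoning that proves Proposition~\ref{Prop:BasicProperties} applies to the total space: $\wh{\mathcal{F}}$ is a fundamental set for $F_r$ acting on $\wh{\mathcal{U}}$, with the images $s\cdot\wh{\mathcal{F}}$ for distinct reduced words $s$ having disjoint interiors, which immediately gives freeness and properness of the $F_r$-action.

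First I would set up the fiberwise description carefully. The key point is that the neighborhoods $U_j(\lambda)=f_j(U_{\eps_j})$ and $V_j(\lambda)=f_j(V_{\eps_j})$ depend holomorphically on $\lambda$ only through $\eps_j$, which is determined by $\abs{\lambda_j}=\frac{1-\eps_j}{\eps_j}$; as $\lambda$ varies over $\mathcal{D}$ these remain pairwise disjoint with disjoint closures, by the very definition of $\mathcal{D}$ as the set of $\lambda$ for which the Schottky construction works. Hence $\wh{U}_j$, $\wh{V}_j$ are open in $X\times\mathcal{D}$ with pairwise disjoint closures over any relatively compact subset of $\mathcal{D}$, and $\wh{\mathcal{U}}$ is an open, $F_r$-invariant, connected subset of $X\times\mathcal{D}$ which projects onto $\mathcal{D}$ with each fiber equal to $\mathcal{U}_{\varGamma(\lambda)}$. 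The freeness and properness of $F_r$ on $\wh{\mathcal{U}}$ then follow by the argument of~\cite[Proposition~9.2.8]{CNS} applied with an extra parameter; since $F_r$ acts trivially on the $\mathcal{D}$-coordinate, properness over the base reduces to fiberwise properness, which is Proposition~\ref{Prop:BasicProperties}, together with the local uniformity in $\lambda$ coming from disjointness of closures on compact subsets of $\mathcal{D}$.

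Next I would produce the quotient. Because $F_r$ acts freely and properly and holomorphically on the complex manifold $\wh{\mathcal{U}}$, the quotient $\wh{\mathcal{U}}/F_r$ is a complex manifold and $\wh{\mathcal{U}}\to\wh{\mathcal{U}}/F_r$ is a holomorphic covering; the $F_r$-invariant holomorphic map $\wh{\mathcal{U}}\to\mathcal{D}$ descends to a holomorphic submersion $\pi\colon\wh{\mathcal{U}}/F_r\to\mathcal{D}$. Its fiber over $\lambda$ is $\mathcal{U}_{\varGamma(\lambda)}/\Gamma(\lambda)=Q_{\varGamma(\lambda)}$, which is compact by Proposition~\ref{Prop:BasicProperties}. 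To conclude that $\pi$ is proper, hence a complex analytic family in Kodaira's sense, I would use the fundamental domain: $\wh{\mathcal{F}}$ maps onto $\wh{\mathcal{U}}/F_r$, and over a relatively compact $\mathcal{D}'\Subset\mathcal{D}$ the closure of $\wh{\mathcal{F}}$ in $X\times\mathcal{D}'$ is compact (being closed in the compact $X\times\ol{\mathcal{D}'}$) and surjects onto $\pi^{-1}(\mathcal{D}')$, giving local properness of $\pi$ and therefore properness.

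Finally, the last sentence follows from Ehresmann's fibration theorem: a proper holomorphic submersion over the connected base $\mathcal{D}$ is a $C^\infty$ fiber bundle, so all fibers $Q_{\varGamma(\lambda)}$ are diffeomorphic. The main obstacle I anticipate is not any single deep step but rather checking the uniformity of the Schottky combinatorics in the parameter $\lambda$ — namely that, on relatively compact subsets of $\mathcal{D}$, the translates $\gamma(\wh{U}_j)$, $\gamma(\wh{V}_j)$ for $\gamma\in F_r$ shrink uniformly and remain mutually disjoint, so that properness of the $F_r$-action holds with the base parameter present. This is exactly the point where one must verify that the proof of~\cite[Proposition~9.2.8]{CNS}, which is purely about nested disjoint open sets, goes through verbatim with $\lambda$ as an inert parameter; everything else is formal once the freeness, properness, and properness-over-the-base are in hand.
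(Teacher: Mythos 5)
Your proposal is correct and follows essentially the same route as the paper, which simply observes that $s_j(\wh{U}_j)=(X\times\mathcal{D})\setminus\wh{V}_j$ and then copies the ping-pong proof of Proposition~\ref{Prop:BasicProperties} verbatim with $\lambda$ as an inert parameter. Your additional details (properness of $\pi$ via the fundamental set over relatively compact subsets of $\mathcal{D}$, and Ehresmann for the diffeomorphism statement) are exactly the points the paper leaves implicit in the phrase ``complex analytic family in the sense of Kodaira.''
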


\begin{proof}
Since we have $s_j(\wh{U}_j)=(X\times\mathcal{D})\setminus \wh{V}_j$ for all 
$1\leq j\leq r$, we may literally copy the proof of the corresponding fact 
without the parameters $\lambda$.
\end{proof}

\begin{prop}
Two Schottky quotient manifolds $Q_\varGamma$ and $Q_{\varGamma'}$ are
biholomorphic if and only if $\Gamma$ and $\Gamma'$ are conjugate in $\Aut(X)$.
\end{prop}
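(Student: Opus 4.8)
The plan is to prove both implications separately. The easy direction is the ``if'' part: suppose $\varphi\in\Aut(X)$ satisfies $\varphi\Gamma\varphi^{-1}=\Gamma'$. I would first check that $\varphi$ maps $\mathcal{U}_\varGamma$ onto $\mathcal{U}_{\varGamma'}$. For this, observe that $\mathcal{U}_\varGamma$ can be characterized intrinsically: it is the set of points $x\in X$ such that the $\Gamma$-action near $x$ is ``nice'', or more concretely, $X\setminus\mathcal{U}_\varGamma$ is the closure of the set of accumulation points of $\Gamma$-orbits (the limit set). Since $\varphi$ conjugates the groups, it carries the limit set of $\Gamma$ to that of $\Gamma'$, hence $\varphi(\mathcal{U}_\varGamma)=\mathcal{U}_{\varGamma'}$. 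Then $\varphi$ descends to a biholomorphism $\mathcal{U}_\varGamma/\Gamma\to\mathcal{U}_{\varGamma'}/\Gamma'$, i.e.\ $Q_\varGamma\cong Q_{\varGamma'}$. Alternatively, and more robustly, one can avoid discussing the limit set: given the conjugating $\varphi$, one checks directly that $\varphi$ maps $\Gamma\cdot\mathcal{F}_\varGamma$ into a set on which $\Gamma'$ acts freely and properly with compact quotient, and since $\mathcal{U}_{\varGamma'}$ is the maximal such set (by Proposition~\ref{Prop:BasicProperties} and the construction), equality follows.

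The substantial direction is the ``only if'' part. Suppose $\Phi\colon Q_\varGamma\to Q_{\varGamma'}$ is a biholomorphism. The strategy is to lift $\Phi$ to a biholomorphism $\wt\Phi\colon\mathcal{U}_\varGamma\to\mathcal{U}_{\varGamma'}$ of the universal covers, which is possible because $\mathcal{U}_\varGamma$ and $\mathcal{U}_{\varGamma'}$ are simply connected (recall, under the standing hypothesis $\codim C_j\ge 2$, this is part of Proposition~\ref{Prop:BasicProperties}) and $\Phi_*$ carries $\pi_1(Q_\varGamma)\cong\Gamma$ isomorphically onto $\pi_1(Q_{\varGamma'})\cong\Gamma'$. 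Covering-space theory then gives that $\wt\Phi$ intertwines the deck actions: there is an isomorphism $\theta\colon\Gamma\to\Gamma'$ with $\wt\Phi\circ\gamma=\theta(\gamma)\circ\wt\Phi$ for all $\gamma\in\Gamma$. The remaining task is to extend $\wt\Phi$ to a global automorphism of $X$. For this I would use an extension theorem: $\mathcal{U}_\varGamma$ is a dense open subset of the compact manifold $X$ whose complement has positive codimension; more precisely, arguing as in Section~5, $X\setminus\mathcal{U}_\varGamma$ is contained in a set of large codimension (or one invokes that $\mathcal{U}_\varGamma$ contains $X\setminus C$ for $C$ a high-codimension subvariety, hence is $q$-complete with small $q$), so by Hartogs-type extension (e.g.\ the results of Merker--Porten and Scheja cited above, or directly Riemann-type extension for maps into a projective variety $X$) the biholomorphism $\wt\Phi$ extends to a holomorphic map $X\to X$; applying the same to $\wt\Phi^{-1}$ shows the extension is a biholomorphism, i.e.\ an element of $\Aut(X)$. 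This extended automorphism then conjugates $\Gamma$ to $\Gamma'$ via $\theta$, completing the proof.

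The main obstacle is the extension step: one must be careful that $X\setminus\mathcal{U}_\varGamma$ is genuinely small enough (in Hausdorff dimension or in the sense of being contained in a proper analytic-like subset of the right codimension) for the relevant extension theorem to apply, and that the target being the projective variety $X$ rather than $\mbb{C}$ is handled correctly---one typically extends the composition with the Pl\"ucker or projective coordinate functions, which are meromorphic, and invokes Theorem~\ref{Thm:MerkerPorten} together with $q$-completeness from Lemma~\ref{Lem:q-completeness}. A secondary subtlety is ensuring the lifted map $\wt\Phi$ is unique up to the deck action so that $\theta$ is well-defined, but this is standard covering-space bookkeeping once simple connectivity of $\mathcal{U}_\varGamma$ is in hand. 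One should also note the degenerate low-dimensional cases (where $\codim C_j=1$, e.g.\ $X=\mbb{P}_1$) are excluded or handled by the classical Schottky theory, so no generality is lost under the hypotheses in force here.
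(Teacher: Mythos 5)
Your proof of the substantive direction is essentially the paper's own argument: lift the biholomorphism to an equivariant biholomorphism $\mathcal{U}_\varGamma\to\mathcal{U}_{\varGamma'}$, write it in projective coordinates as finitely many meromorphic functions, extend them across the compact complement $X\setminus\mathcal{U}_\varGamma$ via Theorem~\ref{Thm:MerkerPorten} together with Lemma~\ref{Lem:q-completeness}, and conclude that the extension is an automorphism of $X$ (the paper settles biholomorphy of the extension by citing~\cite{Iva}) conjugating $\Gamma$ to $\Gamma'$. The converse, which the paper leaves implicit, you sketch via the standard identification of $X\setminus\mathcal{U}_\varGamma$ with the limit set; that is acceptable, so overall your route matches the paper's.
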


\begin{proof}
Suppose that there exists a biholomorphic map $f\colon Q_\varGamma\to
Q_{\varGamma'}$. Then there exist a biholomorphic map $F\colon
\mathcal{U}_{\varGamma}\to\mathcal{U}_{\varGamma'}$ as well as a group
homomorphism $\varphi\colon\Gamma\to\Gamma'$ such that $F\circ\gamma= \varphi
(\gamma)\circ F$ for all $\gamma\in\Gamma$. Since $X$ is projective, $F$ is
given by finitely many meromorphic functions $f_1,\dotsc,f_N$ on
$\mathcal{U}_\varGamma$. Due to Theorem~\ref{Thm:MerkerPorten} and 
Lemma~\ref{Lem:q-completeness} we may thus extend $F$ as a meromorphic map to 
$X$. It is not hard to show that this extended map is biholomorphic, 
see~\cite{Iva}, hence an element of $\Aut(X)$. Consequently, $\Gamma$ and 
$\Gamma'$ are conjugate in $\Aut(X)$.
\end{proof}

In the rest of this subsection we assume in addition that $X$ verifies the
hypotheses of Proposition~\ref{Prop:cohomology}. In this case,
Proposition~\ref{Prop:cohomology} applied to the tangent sheaf $\Theta$ gives
$H^k(Q_\varGamma,\Theta)\simeq H^k(\Gamma,\lie{g})$ for $0\leq k\leq2$ where
$\Gamma$ acts on $\lie{g}$ via the adjoint representation. Explicitly, we get
$H^0(\Gamma,\lie{g})\simeq\lie{g}^\Gamma$ where $\lie{g}^\Gamma$ denotes the
subspace of $\Gamma$-fixed points. Let $H$ be the Zariski closure of $\Gamma$
in $G$. Then we have $\lie{g}^\Gamma=\lie{g}^H$ which in turn coincides with
the centralizer $\mathcal{Z}_\lie{g}(\lie{h})$ if $H$ is connected. Moreover,
the group of biholomorphic automorphisms $\Aut(Q_{\Gamma})$ is a complex Lie 
group with Lie algebra $\lie{g}^H$.  As noted in~\cite[p.~195]{HS}, 
$H^1(\Gamma,\lie{g})$ is isomorphic to the quotient of $\Hom_\Gamma( I\Gamma, 
\lie{g})$, where $I\Gamma$ denotes the augmentation ideal of $\Gamma$, by the 
submodule of homomorphisms of the form $\varphi_\xi\colon \gamma-e\mapsto 
\Ad(\gamma)\xi-\xi$. Under the identification $\Hom_\Gamma(I\Gamma,\lie{g}) 
\simeq\lie{g}^r$ this submodule corresponds to the image of the map
$\psi\colon\lie{g}\to\lie{g}^r$ given by the formula
\begin{equation*}
\psi(\xi)=\bigl(\Ad(\gamma_1)\xi-\xi,\dotsc,\Ad(\gamma_r)\xi-\xi\bigr),
\end{equation*}
i.e., $H^1(\Gamma,\lie{g})\simeq\lie{g}^r/\psi(\lie{g})$. Note that the kernel 
of $\psi$ is $\lie{g}^\Gamma$. Finally, $H^2(\Gamma,\lie{g})=0$, 
see~\cite[Corollary~VI.5.6]{HS}. In summary, we have the following.

\begin{thm}\label{Thm:deform}
Suppose that $X$ verifies the hypotheses of Proposition~\ref{Prop:cohomology}.
The Kuranishi space of versal deformations of $Q_\varGamma$ is smooth at
$Q_\varGamma$ and of complex dimension $(r-1)\dim\lie{g}+\dim\lie{g}^\Gamma$ 
Moreover, the automorphism group $\Aut(Q_{\Gamma})$ admits as Lie algebra 
$\lie{g}^\Gamma$.
\end{thm}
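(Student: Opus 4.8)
The plan is to read the statement off from the cohomology computations assembled in the preceding paragraphs together with Kuranishi's theorem on versal deformations of compact complex manifolds. Recall that this theorem attaches to $Q_\varGamma$ a versal deformation whose base, the Kuranishi space, has Zariski tangent space $H^1(Q_\varGamma,\Theta)$ at the marked point and is cut out, in a neighbourhood of the origin of $H^1(Q_\varGamma,\Theta)$, by an analytic map with values in the obstruction space $H^2(Q_\varGamma,\Theta)$. Hence the dimension and smoothness assertions will follow as soon as we know $\dim H^1(Q_\varGamma,\Theta)$ and that $H^2(Q_\varGamma,\Theta)=0$.

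First I would check that Proposition~\ref{Prop:cohomology} applies to $\mathcal{F}=\Theta_{Q_\varGamma}$: its pullback $\pi^*\Theta$ equals $\Theta_{\mathcal{U}_\varGamma}=\Theta_X|_{\mathcal{U}_\varGamma}$, which extends to the locally free sheaf $\Theta_X$ on $X$, and $H^p(X,\Theta_X)=0$ for all $p\geq1$ by Bott--Borel--Weil (the remark after Proposition~\ref{Prop:cohomology}). Thus $H^k(Q_\varGamma,\Theta)\simeq H^k(\Gamma,\lie{g})$ for $0\leq k\leq 2$, where $\lie{g}=H^0(X,\Theta_X)$ is the Lie algebra of $\Aut(X)$ with $\Gamma$ acting through the adjoint representation, exactly as set up above. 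From the free resolution $0\to(\mbb{Z}\Gamma)^r\to\mbb{Z}\Gamma\to\mbb{Z}\to0$ of the trivial $\mbb{Z}\Gamma$-module (available because $\Gamma$ is free of rank $r$) one gets $H^2(\Gamma,\lie{g})=0$ and the identification $H^1(\Gamma,\lie{g})\simeq\lie{g}^r/\psi(\lie{g})$ already recorded, with $\ker\psi=\lie{g}^\Gamma$. Therefore
\begin{equation*}
\dim H^1(Q_\varGamma,\Theta)=r\dim\lie{g}-\dim\psi(\lie{g})=r\dim\lie{g}-\bigl(\dim\lie{g}-\dim\lie{g}^\Gamma\bigr)=(r-1)\dim\lie{g}+\dim\lie{g}^\Gamma,
\end{equation*}
and $H^2(Q_\varGamma,\Theta)=0$.

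Since the obstruction space vanishes, Kuranishi's theorem then gives that the Kuranishi space of $Q_\varGamma$ is smooth at $Q_\varGamma$ and has complex dimension $\dim H^1(Q_\varGamma,\Theta)=(r-1)\dim\lie{g}+\dim\lie{g}^\Gamma$. For the assertion about automorphisms, $\Aut(Q_\varGamma)$ is a complex Lie group because $Q_\varGamma$ is compact (Bochner--Montgomery), and its Lie algebra is the space of global holomorphic vector fields $H^0(Q_\varGamma,\Theta)$, which by the same isomorphism equals $H^0(\Gamma,\lie{g})=\lie{g}^\Gamma$.

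The main point to stress is that there is no genuine obstacle: once $H^2(Q_\varGamma,\Theta)=0$ is in hand the deformation theory is unobstructed and the rest is bookkeeping. The two places that do need care are verifying that the manifolds $X$ in question fall under Proposition~\ref{Prop:cohomology} (the range restrictions on $n$ and the vanishing $H^1(X,\Theta_X)=H^2(X,\Theta_X)=0$) and identifying the $\Gamma$-module $H^0(\mathcal{U}_\varGamma,\Theta)=\lie{g}$ with its adjoint action, which uses that $\Aut(X)^0$ has Lie algebra $\lie{g}$ and that $\Gamma$ acts by push-forward of vector fields; both have effectively been settled in the preparatory discussion, so the proof itself is short.
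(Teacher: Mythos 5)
Your proposal is correct and follows essentially the same route as the paper: apply Proposition~\ref{Prop:cohomology} to $\Theta$ (using Bott--Borel--Weil for the vanishing on $X$), compute $H^k(\Gamma,\lie{g})$ from the freeness of $\Gamma$ (your explicit free resolution is the same content as the paper's citations of Hilton--Stammbach for $H^1\simeq\lie{g}^r/\psi(\lie{g})$ and $H^2=0$), and conclude via unobstructed Kuranishi theory plus the identification $\Lie\Aut(Q_\varGamma)=H^0(Q_\varGamma,\Theta)\simeq\lie{g}^\Gamma$.
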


\begin{rem}
\begin{enumerate}[(a)]
\item If $X$ is either $\mbb{P}_5$ or $Q_5$ or $Q_6$, then we can still compute 
the dimension of the Kuranishi space. However, we do not know whether the 
Kuranishi space is smooth or not.
\item In \cite[Theorem 9.3.17]{CNS} (see also \cite{SV}), the authors claim that 
the Kuranishi space is of dimension  $(r-1)\dim\lie{g}$. But in general the 
above mapping $\psi$ is {\it not } injective, i.e., it is possible that 
$Q_{\Gamma}$ has strictly positive dimensional automorphism group or
is even almost homogeneous, see our examples~\ref{Ex:posalgdim1} 
and~\ref{Ex:SchottkyCompactifications}.
\end{enumerate}
\end{rem}

\begin{appendix}

\section{Minimal orbits of hypersurface type}

Let $G$ be a simply-connected semisimple complex Lie group, let $Q$ be a
parabolic subgroup of $G$, and let $G_0$ be a non-compact simple real form of
$G$. We say that two triplets $(G,G_0,Q)$ and $(G,\wt{G}_0,\wt{Q})$ are
equivalent if there exist $g_1,g_2\in G$ such that $\wt{G}_0=g_1G_0g_1^{-1}$
and $\wt{Q}=g_2Qg_2^{-1}$. In this appendix we outline the classification (up
to equivalence) of all triplets $(G,G_0,Q)$ such that the minimal $G_0$-orbit
is a real hypersurface in $X=G/Q$.

Throughout we write $\sigma\colon\lie{g}\to\lie{g}$ for conjugation with
respect to $\lie{g}_0$. Let $\theta\colon\lie{g}\to\lie{g}$ be a Cartan
involution that commutes with $\sigma$. Then  we have the corresponding Cartan
decomposition $\lie{g}_0=\lie{k}_0\oplus\lie{p}_0$. The analytic subgroup
$K_0$ of $G_0$ having Lie algebra $\lie{k}_0$ is a maximal compact subgroup
of $G_0$.

The following theorem summarizes the outcome of the appendix.

\begin{thm}\label{Thm:HypersurfaceClassification}
Up to equivalence, the homogeneous rational manifolds $X=G/Q$ and the real 
forms $G_0$ having a compact hypersurface orbit in $X$ are the following:
\begin{enumerate}[(1)]
\item $G_0={\rm{SU}}(p,q)$ acting on $X=\mbb{P}_{p+q-1}$;
\item $G_0={\rm{Sp}}(p,q)$ acting on $X=\mbb{P}_{2(p+q)-1}$;
\item $G_0={\rm{SU}}(1,n)$ acting on $X={\rm{Gr}}_k(\mbb{C}^{n+1})$;
\item $G_0={\rm{SO}}^*(2n)$ acting on $X=Q_{2n-2}$;
\item $G_0={\rm{SO}}(1,2n)$ acting on $X={\rm{IGr}}_n(\mbb{C}^{2n+1})$;
\item $G_0={\rm{SO}}(2,2n)$ acting on $X={\rm{IGr}}_{n+1}(\mbb{C}^{2n+2})^0$.
\end{enumerate}
\end{thm}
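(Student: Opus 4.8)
\emph{Strategy.} Recall that the minimal $G_0$-orbit in $X=G/Q$ is a real hypersurface if and only if $G_0$ has exactly three orbits in $X$, two of which are then open --- one implication is the lemma in Section~3, the other follows because the complement of a closed subset of codimension $\geq2$ in the simply connected $X$ is connected. By Matsuki duality this is equivalent to the Matsuki partner $K=K_0^{\mbb{C}}$ having exactly three orbits in $X$, one open and two compact. The plan is to verify this orbit pattern by running through the finitely many non-compact simple real forms $G_0$ and the finitely many conjugacy classes of parabolic subgroups $Q$ of a simple complex group.

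\emph{Reduction to $\lie{g}$ simple.} If $\lie{g}$ is not simple, then since $G_0$ is a simple real form one has $\lie{g}\cong\lie{h}\oplus\lie{h}$ with $\lie{g}_0$ the realification of a complex simple Lie algebra $\lie{h}$; in this \emph{group case} $X$ is a product $Y\times Y$ of flag manifolds of $H$, the Matsuki partner $K$ is the diagonal copy of $H$, and its unique closed orbit is a conjugate of the diagonal $\Delta Y$, which has complex --- hence real --- codimension at least $2$. So no hypersurface orbit occurs, and we may assume $G$ simple. The classification will moreover show that $Q$ is necessarily a maximal parabolic $P_i$: for a non-maximal $Q\subsetneq Q'$ one fibers $X=G/Q$ over the smaller flag manifold $G/Q'$, and the fiber of this map over a point of the minimal orbit of $G/Q'$ is only a proper subset of the positive-dimensional fiber $Q'/Q$ (the $G_0$-isotropy of such a point acts on $Q'/Q$ through a proper, hence non-transitive, subgroup of the Levi of $Q'$), whence the minimal orbit of $G/Q$ has codimension $\geq2$.

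\emph{The case analysis.} It remains to go through the pairs $(\lie{g}_0,i)$, with $\lie{g}_0$ a non-compact simple real form of a simple $\lie{g}$ and $i$ a node of the Dynkin diagram of $\lie{g}$, and to count the $K$-orbits on $X=G/P_i$; equivalently one computes, from the restricted root system of $\lie{g}_0$, the real codimension of the minimal orbit. For the classical series this count is geometric and immediate: when $X$ is a projective space $\mbb{P}(V)$, a Grassmannian, a quadric, or one of the two spinor varieties, the $G_0$-orbits are the strata determined by the rank and signature of the restriction, to the corresponding subspace, of the $G_0$-invariant Hermitian, quaternionic-Hermitian or symmetric bilinear form carried by $V$; one reads off at once when there are exactly three such strata, two of them open. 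This singles out the six families of the statement --- for instance, for ${\rm{SU}}(p,q)$ on $\mbb{P}_{p+q-1}$ the strata are the positive, negative and isotropic lines (two of the three being open), whereas on ${\rm{Gr}}_k(\mbb{C}^{p+q})$ with $2\leq k\leq p+q-2$ there are strictly more than three unless $\min(p,q)=1$; similarly ${\rm{SO}}^*(2n)$ yields exactly three strata only on the quadric $Q_{2n-2}$, while ${\rm{SO}}(1,2n)$ and ${\rm{SO}}(2,2n)$ do so only on the spinor varieties of $B_n$ and $D_{n+1}$. For all other maximal parabolics of the classical groups, and for every maximal parabolic of each real form of $G_2,F_4,E_6,E_7,E_8$, the same bookkeeping (or a direct inspection of the restricted root system) shows $K$ has at least four orbits; in particular the exceptional groups contribute nothing.

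\emph{Main obstacle.} The heart of the argument is the exhaustiveness of this last step: every non-compact simple real form --- all the classical families in each admissible signature, and every real form of the five exceptional types --- must be paired with every Dynkin node, and the orbit count, elementary case by case, is long and has to be carried out carefully from the Satake diagrams. A secondary nuisance is the reduction to maximal parabolics, which is not purely formal and rests on the non-transitivity of the fiber action above; and one has to watch the low-rank coincidences --- $Q_2\cong\mbb{P}_1\times\mbb{P}_1$, $Q_4\cong{\rm{Gr}}_2(\mbb{C}^4)$, ${\rm{SU}}(1,1)\cong{\rm{SL}}(2,\mbb{R})\cong{\rm{Sp}}(1,\mbb{R})$, and the $B_n$--$D_{n+1}$ identification of spinor varieties --- so that no triplet gets counted twice or overlooked.
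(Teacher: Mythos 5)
Your overall strategy is the same as the paper's: reduce to $\lie{g}$ simple, then classify case by case over the non-compact simple real forms and the conjugacy classes of parabolics, using a criterion that detects when the minimal orbit has real codimension one. But as written the proposal has two genuine gaps. The first is that the case analysis, which \emph{is} the theorem, is never carried out: for the classical series you assert that the rank/signature stratification "reads off at once" the six families, and for the exceptional types you simply assert that "the same bookkeeping shows $K$ has at least four orbits". The paper's proof consists precisely of this bookkeeping, done via Wolf's formula $\codim_{\mbb{R}}(G_0\cdot eQ)=\Abs{\Gamma^{\sf n}\cap\sigma(\Gamma^{\sf n})}$ with the involution $\sigma$ computed from the Satake diagram in each type (including explicit determinations of $\sigma$ on simple roots for $EII$, $EIII$, $EVI$, $EVII$, $FII$), together with the $K$-spherical classification of He--Ochiai--Nishiyama--Oshima to cut down the candidate subsets $\Gamma\subset\Pi$. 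Your outline acknowledges this is "the heart of the argument" and defers it; without it you have a plan, not a proof.

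The second gap is the reduction to maximal parabolics. Your justification — the isotropy of a point of the minimal orbit of $G/Q'$ acts on the fiber $Q'/Q$ "through a proper, hence non-transitive, subgroup of the Levi" — is a false inference: proper subgroups routinely act transitively on flag manifolds (e.g.\ ${\rm{Sp}}(k,\mbb{C})\subset{\rm{SL}}(2k,\mbb{C})$ on $\mbb{P}_{2k-1}$, or, as the paper itself notes in the ${\sf D}_n$ discussion, the real form ${\rm{SO}}(1,2n-1)$ acting transitively on the spinor variety), so non-transitivity of the fiber action requires an actual argument. The paper does not use such a blanket reduction: it proves the maximality claim only in type ${\sf A}_n$, by explicit root computations, and in types ${\sf B}_n$, ${\sf C}_n$, ${\sf D}_n$ (small real rank) it must and does check the non-maximal candidates $\Gamma$ directly. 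Relatedly, your treatment of the non-simple ("group") case conflates the closed $K$-orbit with the minimal $G_0$-orbit (Matsuki duality pairs the closed $K$-orbit with the \emph{open} $G_0$-orbit), and the claimed "complex, hence real, codimension at least $2$" fails already for $H={\rm{SL}}(2,\mbb{C})$, where the diagonal in $\mbb{P}_1\times\mbb{P}_1$ has complex codimension $1$; the correct and easy argument here is either the paper's (a hypersurface minimal orbit forces a real root, impossible when $\sigma$ interchanges the two factors) or the observation that in the group case there is only one closed $K$-orbit, hence only one open $G_0$-orbit, whereas the hypersurface pattern requires two.
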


\subsection{Root-theoretic description of the minimal $G_0$-orbit in $X=G/Q$}

Let $\lie{a}_0$ be a maximal Abelian subspace of $\lie{p}_0$ and let
\begin{equation*}
\lie{g}_0=\lie{m}_0\oplus\lie{a}_0\oplus\bigoplus_{\lambda\in\Lambda}
(\lie{g}_0)_\lambda
\end{equation*}
be the corresponding restricted root space decomposition of $\lie{g}_0$ where
$\lie{m}_0=\mathcal{Z}_{\lie{k}_0}(\lie{a}_0)$ and where $\Lambda=\Lambda(
\lie{g}_0,\lie{a}_0)\subset\lie{a}_0^*\setminus\{0\}$ is the restricted root
system. Choosing a system $\Lambda^+$ of positive restricted roots we obtain
the nilpotent subalgebra $\lie{n}_0:=\bigoplus_{\lambda\in\Lambda^+}
\lie{g}_\lambda$. From this we get the Iwasawa decomposition $G_0=K_0A_0N_0$
where $A_0$ and $N_0$ are the analytic subgroups of $G_0$ having Lie algebras
$\lie{a}_0$ and $\lie{n}_0$, respectively.

Let $\lie{t}_0$ be a maximal torus in $\lie{m}_0$. Then $\lie{h}_0:=\lie{t}_0
\oplus\lie{a}_0$ is a maximally non-compact Cartan subalgebra of $\lie{g}_0$.
Let
\begin{equation*}
\lie{g}=\lie{h}\oplus\bigoplus_{\alpha\in\Delta}\lie{g}_\alpha
\end{equation*}
be the root space decomposition of $\lie{g}$ with respect to the Cartan
subalgebra $\lie{h}:=\lie{h}_0^\mbb{C}$ with root system $\Delta=\Delta(
\lie{g},\lie{h})\subset\lie{h}_\mbb{R}^*\setminus\{0\}$ where $\lie{h}_\mbb{R}
:=i\lie{t}_0\oplus\lie{a}_0$. Let $R\colon\lie{h}_\mbb{R}^*\to\lie{a}_0^*$ be
the restriction operator and let $\Delta_{\sf i}:=\bigl\{\alpha\in\Delta;\
R(\alpha)=0\bigr\}$ be the set of \emph{imaginary} roots.

\begin{rem}
We have $\lie{m}_0^\mbb{C}=\lie{t}_0^\mbb{C}\oplus\bigoplus_{\alpha\in
\Delta_{\sf i}}\lie{g}_\alpha$, i.e., $\Delta_{\sf i}$ is the root system of
$\lie{m}_0^\mbb{C}$ with respect to its Cartan subalgebra $\lie{t}_0^\mbb{C}$.
\end{rem}

Let us define a system $\Delta^+$ of positive roots with respect to the
lexicographic ordering given by a basis of $\lie{h}_\mbb{R}$ whose first $r$
elements form a basis of $\lie{a}_0$. Then, for every $\alpha\in\Delta
\setminus\Delta_{\sf i}$, we have $\alpha\in\Delta^+$ if and only if
$R(\alpha)\in \Lambda^+$ and $R(\Delta^+\setminus\Delta_{\sf i})=\Lambda^+$,
see~\cite[p.156]{Vin3}.

Since the anti-involution $\sigma$ stabilizes $\lie{h}_\mbb{R}$, we obtain an
induced involution on $\lie{h}_\mbb{R}^*$ which we denote again by $\sigma$.
One checks directly that $\sigma$ leaves $\Delta$ invariant and that
$\Delta_{\sf i}=\bigl\{\alpha\in\Delta;\ \sigma(\alpha)=-\alpha\bigr\}$. A root
$\alpha\in\Delta$ is called \emph{real} if $\sigma(\alpha)=\alpha$, and
$\Delta_{\sf r}$ is the set of real roots. Since $R(\alpha)=R\bigl(\sigma
(\alpha)\bigr)$ for all $\alpha\in\Delta$, we get
\begin{equation*}
\sigma(\Delta^+\setminus\Delta_{\sf i})=\Delta^+\setminus\Delta_{\sf i}.
\end{equation*}
In other words, $\Delta^+$ is a $\sigma$-order in the terminology
of~\cite{Ara}.

Before we can state the main result of this subsection, we have to review the
description of parabolic subalgebras of $\lie{g}$ in terms of the root system
$\Delta$. Recall that a root $\alpha\in\Delta^+$ is called \emph{simple} if it
cannot be written as the sum of two positive roots. Let $\Pi\subset\Delta^+$ be
the subset of simple roots. The elements of $\Pi$ form a basis of
$\lie{h}_\mbb{R}^*$ and every positive root can be uniquely written as a linear
combination of simple roots with non-negative integer coefficients.

For an arbitrary subset $\Gamma$ of $\Pi$ we set $\Gamma^{\sf r}:=\langle\Gamma
\rangle_\mbb{Z}\cap\Delta$ and $\Gamma^{\sf n}:=\Delta^+\setminus\Gamma^{\sf
r}$. Then
\begin{equation*}
\lie{q}_\Gamma:=\lie{h}\oplus\bigoplus_{\alpha\in\Gamma^{\sf
r}}\lie{g}_\alpha\oplus
\bigoplus_{\alpha\in\Gamma^{\sf n}}\lie{g}_\alpha
\end{equation*}
is a parabolic subalgebra of $\lie{g}$. The subalgebra $\bigoplus_{\alpha\in
\Gamma^{\sf n}}\lie{g}_\alpha$ is the nilradical of $\lie{q}_\Gamma$, while the
reductive subalgebra $\lie{h}\oplus\bigoplus_{\alpha\in\Gamma^{\sf
r}}\lie{g}_\alpha$
is a Levi subalgebra of $\lie{q}_\Gamma$. Let $Q_\varGamma$ be the analytic
subgroup of $G$ having Lie algebra $\lie{q}_\Gamma$. Then $Q_\varGamma$ is a
parabolic subgroup of $G$ and every parabolic subgroup of $G$ is conjugate to
$Q_\varGamma$ for a suitable choice of $\Gamma\subset\Pi$.

After replacing the triplet $(G,G_0,Q)$ by an equivalent one we may assume that
$G_0\cdot eQ$ is compact in $G/Q$. Due to~\cite[Lemma~3.1]{W} this means that 
there exist a maximally non-compact Cartan subalgebra $\lie{h}_0$ of
$\lie{g}_0$ and a $\sigma$-order $\Delta^+$ of $\Delta=\Delta(\lie{g},\lie{h})$
such that $Q=Q_\varGamma$ for a suitable subset $\Gamma\subset\Pi$.
By~\cite[Theorem~2.12]{W} the real codimension of $G_0\cdot eQ$ in $X$ is given
by $\Abs{\Gamma^{\sf n}\cap\sigma(\Gamma^{\sf n})}$. Therefore the minimal
$G_0$-orbit is a hypersurface if and only if $\Gamma^{\sf n}\cap\sigma
(\Gamma^{\sf n})=\{\alpha_0\}$ for some $\alpha_0\in\Delta^+$.

Suppose that the minimal $G_0$-orbit in $X=G/Q$ is a hypersurface. Then we have
$\sigma(\alpha_0)=\alpha_0$, i.e., $\Delta_{\sf r}^+$ cannot be empty. This
implies that the Lie algebra $\lie{g}$ must be simple, too, and that there are
at least two conjugacy classes of Cartan subalgebras in $\lie{g}_0$.
Furthermore, it is not hard to see that, if $\lie{g}_0$ is a split real form,
then $G_0\simeq{\rm{SL}}(2,\mbb{R})$ and $X\simeq\mbb{P}_1$.

The strategy of the classification is as follows. For every complex simple Lie
algebra $\lie{g}$ and for every real form $\lie{g}_0$ we determine explicitly
the corresponding involution $\sigma$ of $\lie{h}_\mbb{R}^*$ and a
$\sigma$-order $\Delta^+$ of $\Delta=\Delta(\lie{g},\lie{h})$. Then we
enumerate all subsets $\Gamma\subset\Pi\subset\Delta^+$ such that $\Gamma^{\sf
n}\cap\sigma(\Gamma^{\sf n})=\{\alpha_0\}$. This procedure will result in the
list given in the beginning of Section~\ref{Section:MovableSchottky}.

In closing let us note that, if the compact $G_0$-orbit in $X=G/Q$ is a
hypersurface, then $X$ is $K$-spherical. Since the triplets $(G,G_0,Q)$ such
that $X=G/Q$ is $K$-spherical are classified in~\cite[Table~2]{HONO}, the
number of possibilities of $\Gamma$ that have to be checked is further reduced.

The necessary information about root systems and Satake diagrams can be found
in~\cite[Chapter~X.3.3 and Table~VI]{Hel}.

\subsection{The series ${\sf A}_n$}

Let $\lie{g}=\lie{sl}(n+1,\mbb{C})$ with $n\geq1$. The root system of $\lie{g}$
is given by
\begin{equation*}
\Delta=\{\pm(e_k-e_l);\ 1\leq k<l\leq n+1\}
\end{equation*}
where $(e_1,\dotsc,e_{n+1})$ is the standard basis of $\mbb{R}^{n+1}$ and
$\Delta$ is contained in the hypersurface $\{x\in\mbb{R}^{n+1};\ x_1+\dotsb+
x_{n+1}=0\}$. For $\Delta^+:=\{e_k-e_l;\ 1\leq k<l\leq n+1\}$ we have
\begin{equation*}
\Pi=\{\alpha_1=e_1-e_2,\alpha_2=e_2-e_3,\dotsc,\alpha_n=e_n-e_{n+1}\}.
\end{equation*}
A direct calculation shows
\begin{equation*}
e_k-e_l=\alpha_k+\dotsb+\alpha_{l-1}
\end{equation*}
for all $1\leq k<l\leq n+1$.

The non-compact real forms of $\lie{g}$ are $\lie{sl}(n+1,\mbb{R})$, $\lie{sl}
\bigl((n+1)/2,\mbb{H}\bigr)$ if $n+1$ is even, and $\lie{su}(p,q)$ with
$1\leq p\leq q$ and $p+q=n+1$. Since $\lie{sl}(n,\mbb{R})$ is a split real form
and since $\lie{sl}(n,\mbb{H})$ contains only one Cartan subalgebra up to
conjugation, see~\cite[Appendix~C.3]{Kn}, we can restrict attention to
$\lie{g}_0:=\lie{su}(p,q)$. The real rank of $\lie{g}_0$ is
$\rk_\mbb{R}\lie{g}_0:=\dim\lie{a}_0=p$ and the restricted root system
$\Lambda$ is $({\sf BC})_p$ for $p<q$ and ${\sf C}_p$ for $p=q$. The action of
$\sigma$ on $\Delta$ is given by
\begin{equation*}
\sigma(e_k)=
\begin{cases}
-e_{n+2-k} &: 1\leq k\leq p\text{ or }q+1\leq k\leq n+1\\
-e_k &: p+1\leq k\leq q
\end{cases}.
\end{equation*}
This follows from  the fact that $\lie{h}_0=\lie{t}_0\oplus\lie{a}_0$ is
conjugate to the Abelian Lie algebra consisting of all matrices of the form
\begin{equation*}
\diag(it_p+s_1,it_{p-1}+s_2,\dotsc,it_1+s_p,ir_1,\dotsc,ir_{q-p},it_1-s_p,
\dotsc,it_p-s_1)
\end{equation*}
where $t_k,s_l,r_m\in\mbb{R}$ such that $2(t_1+\dotsb+t_p)+r_1+\dotsb+r_{q-p}
=0$. One verifies directly that $\Delta^+$ is a $\sigma$-order.

\begin{rem}
Let $\Gamma_k:=\Pi\setminus\{\alpha_k\}$ for $1\leq k\leq n$. Then we have
$\Gamma_k^{\sf n}=\{e_1-e_{k+1},\dotsc,e_1-e_{n+1},\dotsc,e_k-e_{k+1},\dotsc,
e_k- e_{n+1}\}$. The cardinality of $\Gamma_k^{\sf n}$ is $k(n+1-k)$. The
corresponding homogeneous rational manifold is $X=G/Q_{\varGamma_k}\simeq
{\rm{Gr}}_k(\mbb{C}^{n+1})$.
\end{rem}

\begin{cl}
If the minimal $G_0$-orbit in $X=G/Q$ is a hypersurface, then $Q$ is a maximal
parabolic subgroup of $G$, i.e., $Q=Q_{\varGamma_k}$ for some $1\leq k\leq n$.
\end{cl}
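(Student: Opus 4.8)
The plan is to prove the contrapositive by a root--combinatorial computation. Since the minimal $G_0$-orbit is a hypersurface, $X=G/Q$ has positive dimension, so $Q\neq G$; after the normalisation recorded above I may write $Q=Q_\Gamma$ with $\Gamma\subsetneq\Pi$ and $\Pi\setminus\Gamma=\{\alpha_{j_1},\dotsc,\alpha_{j_m}\}$, $j_1<\dotsb<j_m$, $m\geq1$. By~\cite[Theorem~2.12]{W} the real codimension of the minimal orbit equals $\Abs{\Gamma^{\sf n}\cap\sigma(\Gamma^{\sf n})}$, so the claim reduces to showing that this cardinality is at least $2$ whenever $m\geq2$.

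First I would record the combinatorics. From $e_k-e_l=\alpha_k+\dotsb+\alpha_{l-1}$ one sees that $e_k-e_l\in\Gamma^{\sf r}$ exactly when $[k,l-1]$ contains none of $j_1,\dotsc,j_m$, i.e.\ when $k$ and $l$ lie in the same block of the partition of $\{1,\dotsc,n+1\}$ into the intervals $[1,j_1],[j_1+1,j_2],\dotsc,[j_m+1,n+1]$; equivalently, $e_k-e_l\in\Gamma^{\sf n}$ iff $[k,l-1]\cap\{j_1,\dotsc,j_m\}\neq\emptyset$. From the explicit form of $\lie{h}_0$ displayed above, $\sigma(e_i)=-e_{\tau(i)}$, where $\tau$ is the involution of $\{1,\dotsc,n+1\}$ fixing $[p+1,q]$ pointwise and sending $i\mapsto n+2-i$ elsewhere, so that $\sigma(e_k-e_l)=e_{\tau(l)}-e_{\tau(k)}$.

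Then I would produce two distinct roots in $\Gamma^{\sf n}\cap\sigma(\Gamma^{\sf n})$. The first is the highest root $\theta=e_1-e_{n+1}$: it lies in $\Gamma^{\sf n}$ because $1\leq j_1\leq n$, and it is $\sigma$-fixed because $\tau(1)=n+1$. For the second, I observe that for $2\leq b\leq n$ the root $e_1-e_b$ lies in $\Gamma^{\sf n}$ as soon as $b\geq j_1+1$, while $\sigma(e_1-e_b)=e_{\tau(b)}-e_{n+1}$ is a positive root (since $b\geq2$) lying in $\Gamma^{\sf n}$ as soon as $\tau(b)\leq j_m$; so it is enough to find $b\in[j_1+1,n]$ with $\tau(b)\leq j_m$. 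If $p\geq2$ I would take $b=n\in[q+1,n+1]$, so $\tau(n)=2\leq j_2\leq j_m$; if $p=1$ then $[j_1+1,n]\subseteq[p+1,q]$, on which $\tau$ is the identity, and I would take $b=j_1+1$, so $\tau(b)=j_1+1\leq j_2\leq j_m$. In either case $b\leq n$, hence $e_1-e_b\neq\theta$, and I obtain the two required roots; this contradicts $\Abs{\Gamma^{\sf n}\cap\sigma(\Gamma^{\sf n})}=1$, forcing $m=1$, i.e.\ $Q=Q_{\Gamma_k}$ for some $k$.

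The verifications of the block description of $\Gamma^{\sf r}$ and of the formula $\sigma(e_i)=-e_{\tau(i)}$ are routine. The delicate point --- and the place where I expect to have to be careful --- is the choice of the second root when $p=1$: the set $\Gamma^{\sf n}\cap\sigma(\Gamma^{\sf n})$ is $\sigma$-stable, so a non-$\sigma$-fixed element automatically comes in a pair, but when $p=1$ the only $\sigma$-fixed positive root is $\theta$ itself, so one genuinely needs a non-real second element and must arrange simultaneously that $\sigma(e_1-e_b)$ is positive and that both $e_1-e_b$ and $\sigma(e_1-e_b)$ lie in $\Gamma^{\sf n}$. Handling $p=1$ and $p\geq2$ separately, as above, keeps the bookkeeping with $\tau$ transparent, and this is the only step that is not immediate.
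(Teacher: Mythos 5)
Your argument is correct and takes essentially the same route as the paper's: both prove the contrapositive via Wolf's codimension formula $\Abs{\Gamma^{\sf n}\cap\sigma(\Gamma^{\sf n})}$ and exhibit two explicit roots in that intersection whenever at least two simple roots are removed from $\Pi$. Your bookkeeping—using the highest root $e_1-e_{n+1}$ as a universal first element and then $e_1-e_n$ (for $p\geq2$) or $e_1-e_{j_1+1}$ (for $p=1$) as the second—is a slightly more uniform version of the paper's case split, but the substance is the same.
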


\begin{proof}
Exclude the trivial case $p=q=1$ and suppose that $Q_\varGamma$ is not maximal,
i.e., that $\Gamma\subset\Pi\setminus\{ \alpha_k,\alpha_l\}$ for some $1\leq
k<l\leq n$. Then $\Gamma^{\sf n}$ contains
\begin{multline*}
\Gamma^{\sf n}_k\cup\Gamma^{\sf n}_l=\{ e_1-e_{k+1},\dotsc,e_1-e_{n+1},\dotsc,
e_k-e_{k+1},\dotsc,e_k-e_{n+1},\\e_{k+1}-e_{l+1},\dotsc,e_{k+1}-e_{n+1},
\dotsc,e_{l}-e_{l+1},\dotsc,e_l-e_{n+1}\}.
\end{multline*}

If $p$ is arbitrary and $l=n$, then $\Gamma^{\sf n}$ contains $e_1-e_j$ and
$e_j-e_{n+1}$ for all $k+1\leq j\leq n$. Since we have excluded $p=q=1$, either
we have $1=p<q$ or $2\leq p\leq q$. In the first case $\Gamma^{\sf n}\cap
\sigma(\Gamma^{\sf n})$ contains $e_1-e_{p+1}$ and $e_{p+1}-e_{n+1}$, while in
the second case $\Gamma^{\sf n}\cap\sigma(\Gamma^{\sf n})$ contains $e_1-e_2$
and $e_n-e_{n+1}$. Hence, in both cases the minimal $G_0$-orbit is not a
hypersurface.

If $p=1$ and $1\leq k<l\leq n-1$, then $\Gamma^{\sf n}\cap\sigma(\Gamma^{\sf
n})$ contains again $e_1-e_2$ and $e_2-e_{n+1}$ so that the minimal $G_0$-orbit
is not a hypersurface.

Suppose finally that $p\geq2$ and $1\leq k<l\leq n-1$. Then $\Gamma^{\sf n}\cap
\sigma(\Gamma^{\sf n})$ contains $e_1-e_{n+1}$ and $e_2-e_n$, which finishes
the proof of the claim.
\end{proof}

\begin{cl}
The minimal orbit of $G_0={\rm{SU}}(1,n)$ is a hypersurface in
$X=G/Q_{\varGamma_k}$ for every $1\leq k\leq n$.
\end{cl}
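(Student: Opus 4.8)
The plan is to apply directly the codimension formula recalled above, namely $\codim_\mbb{R}(G_0\cdot eQ)=\Abs{\Gamma^{\sf n}\cap\sigma(\Gamma^{\sf n})}$, with $\Gamma=\Gamma_k$ and $\lie{g}_0=\lie{su}(1,n)$, i.e.\ $p=1$ and $q=n$. By the criterion stated before, it suffices to show that $\Gamma_k^{\sf n}\cap\sigma(\Gamma_k^{\sf n})=\{e_1-e_{n+1}\}$ for every $1\leq k\leq n$, since a one-element intersection forces the minimal $G_0$-orbit to be a real hypersurface.

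First I would specialize the formula for $\sigma$ to $p=1$, $q=n$, which gives $\sigma(e_1)=-e_{n+1}$, $\sigma(e_{n+1})=-e_1$ and $\sigma(e_i)=-e_i$ for $2\leq i\leq n$. Combining this with the explicit description $\Gamma_k^{\sf n}=\{e_i-e_j;\ 1\leq i\leq k<j\leq n+1\}$ from the Remark above, the key step is a four-case analysis computing $\sigma(e_i-e_j)$ and deciding its membership in $\Gamma_k^{\sf n}$. In case (i), $i=1$ and $j=n+1$: then $\sigma(e_1-e_{n+1})=e_1-e_{n+1}$, which does lie in $\Gamma_k^{\sf n}$, since its expansion $\alpha_1+\dotsb+\alpha_n$ involves $\alpha_k$ and so $e_1-e_{n+1}\notin\langle\Gamma_k\rangle_\mbb{Z}=\Gamma_k^{\sf r}$. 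In case (ii), $i=1$ and $k<j\leq n$: then $\sigma(e_1-e_j)=e_j-e_{n+1}=\alpha_j+\dotsb+\alpha_n$, a positive root lying in $\Gamma_k^{\sf r}$, hence not in $\Gamma_k^{\sf n}$. In case (iii), $2\leq i\leq k$ and $j=n+1$: then $\sigma(e_i-e_{n+1})=e_1-e_i=\alpha_1+\dotsb+\alpha_{i-1}$, again a positive root in $\Gamma_k^{\sf r}$. In case (iv), $2\leq i\leq k$ and $k<j\leq n$: then $\sigma(e_i-e_j)=e_j-e_i$, which is a \emph{negative} root (as $j>i$) and hence does not belong to $\Delta^+$ at all. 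Collecting the four cases, $e_1-e_{n+1}$ is the only root of $\sigma(\Gamma_k^{\sf n})$ contained in $\Gamma_k^{\sf n}$, so $\Gamma_k^{\sf n}\cap\sigma(\Gamma_k^{\sf n})=\{e_1-e_{n+1}\}$ and the codimension formula yields the claim.

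The main obstacle here is bookkeeping rather than mathematics: one must carefully separate $\Gamma_k^{\sf n}$ from $\Gamma_k^{\sf r}$ — equivalently, keep track of whether a positive root $e_a-e_b$ crosses the deleted simple root $\alpha_k$, i.e.\ whether $a\leq k<b$ — and one must remember that $\sigma$ preserves positivity only on $\Delta^+\setminus\Delta_{\sf i}$, so that the images under $\sigma$ of the imaginary roots $e_i-e_j$ with $2\leq i\leq k<j\leq n$ inside $\Gamma_k^{\sf n}$ end up outside $\Delta^+$. Once case (iv) is understood in this way the remaining cases are entirely routine, and the conclusion that the minimal $G_0$-orbit is a hypersurface is immediate. (Note in particular that for $k=1$ this recovers $G_0={\rm{SU}}(1,n)$ acting on $X=\mbb{P}_n$.)
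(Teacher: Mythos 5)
Your proof is correct and follows essentially the same route as the paper: specialize $\sigma$ for $p=1$ (so $\sigma(e_1)=-e_{n+1}$, $\sigma(e_{n+1})=-e_1$, $\sigma(e_j)=-e_j$ for $2\leq j\leq n$) and check root by root which elements of $\Gamma_k^{\sf n}$ have $\sigma$-image again in $\Gamma_k^{\sf n}$, concluding $\Gamma_k^{\sf n}\cap\sigma(\Gamma_k^{\sf n})=\{e_1-e_{n+1}\}$ and then invoking Wolf's codimension formula. The only difference is presentational: the paper first discards the imaginary roots $e_i-e_j$ ($2\leq i<j\leq n$) and then observes that $\sigma$ exchanges $e_1-e_j$ with $e_j-e_{n+1}$ of which only one can cross $\alpha_k$, whereas you spell this out as a four-case analysis.
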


\begin{proof}
Since $p=1$, we have $\sigma(e_j)=-e_j$ for all $2\leq j\leq n$. Therefore, the
only roots in $\Gamma_k^{\sf n}$ which are not imaginary are
$e_1-e_{k+1},\dotsc, e_1-e_{n+1}$ and $e_2-e_{n+1},\dotsc,e_k-e_{n+1}$.
But for $2\leq j\leq n$ only one of the roots $e_1-e_j$ and
$\sigma(e_1-e_j)=e_j-e_{n+1}$ can belong to $\Gamma_k^{\sf n}$, which proves
$\Gamma_k^{\sf n}\cap\sigma(\Gamma_k^{\sf n})=\{e_1-e_{n+1}\}$.
\end{proof}

\begin{cl}
The minimal $G_0$-orbit in $X=G/Q_{\varGamma_1}\simeq\mbb{P}_n$ and
$X=G/Q_{\varGamma_n}\simeq\mbb{P}_n$ is a hypersurface for any
$G_0={\rm{SU}}(p,q)$.
\end{cl}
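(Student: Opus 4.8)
The plan is to run the same root-theoretic bookkeeping as in the previous claim, now for the two extreme maximal parabolics $Q_{\varGamma_1}$ and $Q_{\varGamma_n}$. First I would record the relevant sets $\Gamma^{\sf n}$. Since $\Gamma_1=\Pi\setminus\{\alpha_1\}=\{\alpha_2,\dotsc,\alpha_n\}$, the roots lying in $\langle\Gamma_1\rangle_\mbb{Z}$ are exactly the $e_k-e_l$ with $2\le k<l\le n+1$, so
\[
\Gamma_1^{\sf n}=\{e_1-e_l;\ 2\le l\le n+1\},
\]
which has $n$ elements, in accordance with $X=G/Q_{\varGamma_1}\simeq\mbb{P}_n$. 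Dually, $\Gamma_n=\{\alpha_1,\dotsc,\alpha_{n-1}\}$ gives $\Gamma_n^{\sf n}=\{e_k-e_{n+1};\ 1\le k\le n\}$.

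Next I would apply $\sigma$ to the elements of $\Gamma_1^{\sf n}$ using the displayed formula for $\sigma$ on $\Delta$. Since $1\le p$ we have $\sigma(e_1)=-e_{n+1}$, hence
\[
\sigma(e_1-e_l)=\begin{cases} e_{n+2-l}-e_{n+1} &: 1\le l\le p\text{ or }q+1\le l\le n+1,\\ e_l-e_{n+1} &: p+1\le l\le q.\end{cases}
\]
For $l=n+1$ this gives back $e_1-e_{n+1}$, the real highest root, which certainly lies in $\Gamma_1^{\sf n}$; for $2\le l\le n$, on the other hand, the first basis index occurring in $\sigma(e_1-e_l)$ is $n+2-l\ge 2$ (resp.\ $l\ge 2$), so $\sigma(e_1-e_l)$ is never of the form $e_1-e_m$ and hence not in $\Gamma_1^{\sf n}$. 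Therefore $\Gamma_1^{\sf n}\cap\sigma(\Gamma_1^{\sf n})=\{e_1-e_{n+1}\}$, i.e.\ $\Abs{\Gamma_1^{\sf n}\cap\sigma(\Gamma_1^{\sf n})}=1$, and by the codimension formula recalled above the minimal $G_0$-orbit in $\mbb{P}_n$ is a hypersurface.

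The case of $\Gamma_n$ is entirely parallel. From $\sigma(e_{n+1})=-e_1$ (valid since $n+1\ge q+1$) one gets $\sigma(e_k-e_{n+1})=e_1-e_{n+2-k}$ when $1\le k\le p$ or $q+1\le k\le n$, and $\sigma(e_k-e_{n+1})=e_1-e_k$ when $p+1\le k\le q$; in either form this lies in $\Gamma_n^{\sf n}=\{e_k-e_{n+1};\ 1\le k\le n\}$ only when the subtracted basis vector equals $e_{n+1}$, which forces $k=1$. Hence again $\Gamma_n^{\sf n}\cap\sigma(\Gamma_n^{\sf n})=\{e_1-e_{n+1}\}$ and the minimal orbit is a hypersurface.

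There is no genuine obstacle here; the only point requiring care is to keep the three ranges $1\le l\le p$, $p+1\le l\le q$ and $q+1\le l\le n+1$ apart when evaluating $\sigma$, and to notice that the extreme value ($l=n+1$ for $\Gamma_1$, $k=1$ for $\Gamma_n$) produces precisely the real highest root $e_1-e_{n+1}$, not a spurious second element of the intersection.
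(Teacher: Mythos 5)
Your proposal is correct and follows essentially the same route as the paper: compute $\Gamma_1^{\sf n}$ and $\Gamma_n^{\sf n}$ explicitly, apply the formula for $\sigma$ (using $\sigma(e_1)=-e_{n+1}$, $\sigma(e_{n+1})=-e_1$), and observe that only the real root $e_1-e_{n+1}$ survives in $\Gamma^{\sf n}\cap\sigma(\Gamma^{\sf n})$, so Wolf's codimension formula gives a hypersurface orbit. The only difference is cosmetic: the paper compresses the computation and dismisses the case $\Gamma_n$ as ``similar,'' while you carry out both cases in full.
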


\begin{proof}
If $\Gamma=\Pi\setminus\{\alpha_1\}$, then $\Gamma^{\sf n}=\{e_1-e_2,\dotsc,
e_1-e_{n+1}\}$. For every $2\leq j\leq n+1$ we have $\sigma(e_1-e_j)=
-\sigma(e_j)-e_{n+1}$ and $\sigma(e_j)=-e_1$ occurs only for $j=n+1$, which
proves $\Gamma^{\sf n}\cap\sigma(\Gamma^{\sf n})=\{e_1-e_{n+1}\}$.

The case $\Gamma=\Pi\setminus\{\alpha_n\}$ can be treated similarly.
\end{proof}

In order to take care of the remaining cases we show

\begin{cl}
Suppose that $p\geq2$ and $2\leq k\leq n-1$. Then the minimal $G_0$-orbit in
$X=G/Q_{\varGamma_k}$ is \emph{not} a hypersurface.
\end{cl}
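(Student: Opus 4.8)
The plan is to produce two distinct roots belonging to $\Gamma_k^{\sf n}\cap\sigma(\Gamma_k^{\sf n})$; by the codimension formula $\codim_\mbb{R}(G_0\cdot eQ)=\Abs{\Gamma^{\sf n}\cap\sigma(\Gamma^{\sf n})}$ recalled above, this at once shows that the minimal $G_0$-orbit has real codimension at least $2$ in $X=G/Q_{\varGamma_k}$ and hence is not a hypersurface.

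Recall that $\Gamma_k^{\sf n}=\{e_i-e_j;\ 1\le i\le k<j\le n+1\}$ and that $\sigma(e_i)=-e_{n+2-i}$ whenever $i\le p$ or $i\ge q+1$. First I would consider the root $e_1-e_{n+1}$: since $1\le k\le n-1<n+1$ it lies in $\Gamma_k^{\sf n}$, and since $1\le p$ and $n+1\ge q+1$ one computes $\sigma(e_1-e_{n+1})=-e_{n+1}+e_1=e_1-e_{n+1}$, so this is a real root and therefore also lies in $\sigma(\Gamma_k^{\sf n})$. Next I would consider $e_2-e_n$: because $2\le k$ and $k+1\le n$ (using $k\le n-1$), it lies in $\Gamma_k^{\sf n}$, and the hypotheses $2\le k\le n-1$ force $n\ge3$, so $e_2-e_n$ is genuinely a positive root. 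Here is where the assumption $p\ge2$ enters: it gives $2\le p$ and, via $p+q=n+1$, the inequality $q\le n-1$, i.e.\ $n\ge q+1$; consequently $\sigma(e_2)=-e_n$ and $\sigma(e_n)=-e_2$, so that $\sigma(e_2-e_n)=-e_n+e_2=e_2-e_n$ is again $\sigma$-fixed and lies in $\sigma(\Gamma_k^{\sf n})$ as well. Since $e_1-e_{n+1}\neq e_2-e_n$, we obtain $\Abs{\Gamma_k^{\sf n}\cap\sigma(\Gamma_k^{\sf n})}\ge2$, which finishes the argument.

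There is essentially no obstacle here beyond guessing the right pair of roots: the whole content is the observation that both $e_1-e_{n+1}$ and $e_2-e_n$ are real roots (fixed by $\sigma$) that already occur in the nilradical part $\Gamma_k^{\sf n}$, and the only place where $p\ge2$ is actually needed is to make $e_2-e_n$ lie in the required index ranges and be $\sigma$-invariant. One should also check in passing that $e_1-e_{n+1}$ and $e_2-e_n$ are never equal and are honest positive roots under the standing hypotheses $p\ge2$ and $2\le k\le n-1$, which is immediate.
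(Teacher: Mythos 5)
Your proposal is correct and is essentially the paper's own argument: the paper likewise exhibits the two roots $e_1-e_{n+1}$ and $e_2-e_n$ in $\Gamma_k^{\sf n}$, notes that $p\geq2$ makes them real (hence fixed by $\sigma$), and concludes via the codimension formula $\codim_\mbb{R}(G_0\cdot eQ)=\Abs{\Gamma^{\sf n}\cap\sigma(\Gamma^{\sf n})}$. Your version merely spells out the index-range and $\sigma$-invariance checks that the paper leaves implicit.
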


\begin{proof}
Since $2\leq k\leq n-1$, the set $\Gamma_k^{\sf n}$ contains the two roots
$e_1-e_{n+1}$ and $e_2-e_n$. Moreover, due to $p\geq2$, these roots are real,
hence the claim follows. 
\end{proof}

In summary, we have established the first and third entry in the list given in
the beginning of Section~\ref{Section:MovableSchottky}.

\subsection{The series ${\sf B}_n$}

Let $\lie{g}=\lie{so}(2n+1,\mbb{C})$. The root system of $\lie{g}$ is given by
\begin{equation*}
\Delta=\{\pm e_k;\ 1\leq k\leq n\}\cup\{\pm e_k\pm e_l;\ 1\leq k<l\leq n\}
\end{equation*}
where $(e_1,\dotsc,e_n)$ is the standard basis of $\mbb{R}^n$. For $\Delta^+:=
\{e_k, e_k\pm e_l;\ 1\leq k<l\leq n\}$ we have
\begin{equation*}
\Pi=\{\alpha_1=e_1-e_2, \alpha_2=e_2-e_3,\dotsc,\alpha_{n-1}=e_{n-1}-e_n,
\alpha_n=e_n\}.
\end{equation*}
A direct calculation shows
\begin{align*}
e_k&=\alpha_k+\dotsb+\alpha_n\\
e_k-e_l&=\alpha_k+\dotsb+\alpha_{l-1}\\
e_k+e_l&=\alpha_k+\dotsb+\alpha_{l-1}+2(\alpha_l+\dotsb+\alpha_n)
\end{align*}
for all $1\leq k<l\leq n$.

The only non-compact real forms of $\lie{g}$ are $\lie{g}_0:=\lie{so}(p,q)$ with
$1\leq p\leq q$ and $p+q=2n+1$. The real rank of $\lie{g}_0$ is
$\rk_\mbb{R}\lie{g}_0=p$ and the restricted root system $\Lambda$ coincides with
${\sf B}_p$. The action of $\sigma$ on $\Delta$ is given by
\begin{equation*}
\sigma(e_k)=
\begin{cases}
e_k &: 1\leq k\leq p\\
-e_k &: p+1\leq k\leq p+\left[\frac{q-p}{2}\right]=n
\end{cases}.
\end{equation*}
Therefore the simple roots $\alpha_1,\dotsc,\alpha_{p-1}$ are real and
$\alpha_{p+1},\dotsc,\alpha_n$ are imaginary, while $\sigma(\alpha_p)=e_p+
e_{p+1}$.

According to~\cite[Table~2]{HONO} the only $\Gamma\subset\Pi$ such that the
minimal $G_0$-orbit in $X=G/Q_\Gamma$ might be a hypersurface are the
following: if $p=1$, then $\Gamma\subset\Pi$ is arbitrary; if $p=2$, then
$\Gamma=\Pi\setminus\{\alpha_j\}$ for $1\leq j\leq n$; if $p\geq3$, then
$\Gamma$ is either $\Pi\setminus\{\alpha_1\}$ or $\Pi\setminus\{\alpha_n\}$.

Let us assume first $p\geq2$. If $\Gamma=\Pi\setminus\{\alpha_1\}$, then
$\Gamma^{\sf n}$ contains the real roots $e_1$ and $e_1\pm e_2$ so that the
minimal $G_0$-orbit cannot be a hypersurface. If $2\leq j\leq n$ and
$\Gamma=\Pi\setminus\{\alpha_j\}$, then $\Gamma^{\sf n}$ contains the real roots
$e_1$ and $e_2$ so that the minimal $G_0$-orbit cannot be a hypersurface.

Assume now that $p=1$. If $\Gamma$ does not contain $\alpha_j$ for some $1\leq
j\leq n-1$, then $\Gamma^{\sf n}$ contains $e_1\pm e_n$, so that the minimal
$G_0$-orbit cannot be a hypersurface. On the other hand, for $\Gamma=\Pi
\setminus\{\alpha_n\}$ we have $\Gamma^{\sf n}=\{e_1,\dotsc,e_n,e_k+e_l;\ 1\leq
k<l\leq n\}$, hence $\Gamma^{\sf n}\cap\sigma(\Gamma^{\sf n})=\{e_1\}$. In this
case the minimal $G_0$-orbit is a hypersurface.

\begin{rem}\label{Rem:IsotrGrassmann}
Let $\Gamma=\Pi\setminus\{\alpha_n\}$. Then $G_0={\rm{SO}}(1,2n)$ has a compact
hypersurface orbit in $X=G/Q_\Gamma$. We have $\dim_\mbb{C}X=\abs{\Gamma^{\sf
n}}=n(n+1)/2$. Note that $X\simeq{\rm{IGr}}_n(\mbb{C}^{2n+1})$.
\end{rem}

\subsection{The series ${\sf C}_n$}

Let $\lie{g}=\lie{sp}(n,\mbb{C})$ with $n\geq2$. The root system of $\lie{g}$
is given by
\begin{equation*}
\Delta=\{2e_k;\ 1\leq k\leq n\}\cup\{\pm e_k\pm e_l;\ 1\leq k<l\leq n\}
\end{equation*}
where $(e_1,\dotsc,e_n)$ is the standard basis of $\mbb{R}^n$. For $\Delta^+:=
\{2e_k, e_k\pm e_l;\ 1\leq k<l\leq n\}$ we have
\begin{equation*}
\Pi=\{\alpha_1=e_1-e_2,\alpha_2=e_2-e_3,\dotsc,\alpha_{n-1}=e_{n-1}-e_n,
\alpha_n=2e_n\}.
\end{equation*}
A direct calculation shows that
\begin{align*}
2e_k&=2(\alpha_k+\dotsb+\alpha_{n-1})+\alpha_n\\
e_k-e_l&=\alpha_k+\dotsb+\alpha_{l-1}\\
e_k+e_l&=\alpha_k+\dotsb+\alpha_{l-1}+2(\alpha_l+\dotsb+\alpha_{n-1})+\alpha_n
\end{align*}
for all $1\leq k<l\leq n$.

The non-compact real forms of $\lie{g}$ are $\lie{sp}(n,\mbb{R})$ and
$\lie{sp}(p,q)$ with $1\leq p\leq q$ and $p+q=n$. Since $\lie{sp}(n,\mbb{R})$ is
a split real form, it is sufficient to consider $\lie{g}_0:=\lie{sp}(p,q)$. The
real rank of $\lie{g}_0$ is $p$ and the restricted root system coincides with
$({\sf BC})_p$ for $p<q$ and ${\sf C}_p$ for $p=q$. The action of $\sigma$ on
$\Delta$ is given by
\begin{equation*}
\sigma(e_k)=
\begin{cases}
e_{k+1} &: \text{if $1\leq k\leq 2p$ is odd}\\
e_{k-1} &: \text{if $1\leq k\leq 2p$ is even}\\
-e_k &: 2p+1\leq k\leq n\\
\end{cases}.
\end{equation*}

According to~\cite[Table~2]{HONO} the only $\Gamma\subset\Pi$ such that the
minimal $G_0$-orbit in $X=G/Q_\Gamma$ might be a hypersurface are the
following: if $p=1$, then $\Gamma=\Pi\setminus\{\alpha_k,\alpha_l\}$ for all
$1\leq k\leq l\leq n$ (with $k=l$ allowed); if $p=2$, then
$\Gamma=\Pi\setminus\{\alpha_k\}$ for all $1\leq k\leq n$; if $p\geq3$, then
the only possibilities for $\Gamma$ are $\Pi\setminus\{\alpha_k\}$ for
$k=1,2,3,n$ or $\Pi\setminus\{\alpha_1,\alpha_2\}$.

Let $p$ be arbitrary. For $\Gamma=\Pi\setminus\{\alpha_1\}$ we have $\Gamma^{\sf
n}=
\{e_1\pm e_2,\dotsc,e_1\pm e_n, 2e_1\}$ and thus $\Gamma^{\sf
n}\cap\sigma(\Gamma^{\sf n})=
\{e_1+e_2\}$. Hence, $G_0={\rm{Sp}}(p,q)$ has a compact hypersurface orbit in
$X=G/Q_\Gamma\simeq\mbb{P}_{2n-1}$. Now suppose that $\Gamma$ does not contain
the root $\alpha_k$ for some $k\geq2$. Then $\Gamma^{\sf n}$ contains $2e_1$ and
$2e_2=\sigma(2e_1)$, so that the minimal $G_0$-orbit in $X$ is not a
hypersurface.

\subsection{The series ${\sf D}_n$}

Let $\lie{g}=\lie{so}(2n,\mbb{C})$ with $n\geq4$.\footnote{Recall that
$\lie{so}(6,\mbb{C})\simeq\lie{sl}(4,\mbb{C})$.} The root system of $\lie{g}$
is given by
\begin{equation*}
\Delta=\{\pm e_k\pm e_l;\ 1\leq k<l\leq n\}
\end{equation*}
where $(e_1,\dotsc,e_n)$ is the standard basis of $\mbb{R}^n$. For $\Delta^+:=
\{e_k\pm e_l;\ 1\leq k<l\leq n\}$ we have
\begin{equation*}
\Pi=\{\alpha_1=e_1-e_2,\alpha_2=e_2-e_3,\dotsc,\alpha_{n-1}=e_{n-1}-e_n,
\alpha_n=e_{n-1}+e_n\}.
\end{equation*}

\begin{rem}
There exists an automorphism of $\Pi$ that exchanges $\alpha_{n-1}$ and
$\alpha_n$. Consequently, there exists an outer automorphism of $G={\rm{SO}}
(2n,\mbb{C})$ that maps $Q_{\Pi\setminus\{\alpha_{n-1}\}}$ onto
$Q_{\Pi\setminus\{\alpha_n\}}$ although these parabolic groups are not
conjugate in $G$. In particular, the corresponding homogeneous rational
manifolds are isomorphic. As hermitian symmetric spaces they are isomorphic to
${\rm{SO}}(2n)/{\rm{U}}(n)$.
\end{rem}

A direct calculation shows that
\begin{align*}
e_k-e_l&=\alpha_k+\dotsb+\alpha_{l-1}\\
e_k+e_{n-1}&=\alpha_k+\dotsb+\alpha_n\text{ for all $1\leq k\leq n-2$}\\
e_k+e_n&=\alpha_k+\dotsb+\alpha_{n-2}+\alpha_n\text{ for all $1\leq k\leq
n-2$}\\
e_k+e_l&=\alpha_k+\dotsb+\alpha_{l-1}+2(\alpha_l+\dotsb+\alpha_{n-2})+
\alpha_{n-1}+\alpha_n\text{ for all $1\leq k<l\leq n-2$}.
\end{align*}

The non-compact real forms of $\lie{g}$ are $\lie{so}^*(2n)$ and $\lie{so}(p,q)$
with $1\leq p\leq q$ and $p+q=2n$.

Consider first $\lie{g}_0=\lie{so}^*(2n)$. The real rank of $\lie{g}_0$ is
$[n/2]$ and the restricted root system is $({\sf BC})_m$ for $n=2m+1$ and ${\sf
C}_m$ if $n=2m$.

We start with the case that $n=2m$ is even. The corresponding involution of
$\Delta$ is induced by
\begin{equation*}
\sigma(e_k)=
\begin{cases}
e_{k+1}&:\text{$1\leq k\leq n$ is odd}\\
e_{k-1}&:\text{$1\leq k\leq n$ is even}\\
\end{cases}.
\end{equation*}
One verifies directly that $\Delta^+$ is a $\sigma$-order.

For $\Gamma=\Pi\setminus\{\alpha_1\}$ we have $\Gamma^{\sf n}=\{e_1\pm e_2,
\dotsc,e_1\pm e_n\}$ and hence $\Gamma^{\sf n}\cap\sigma(\Gamma^{\sf n})=
\{e_1+e_2\}$. Consequently, the minimal $G_0$-orbit in $X=G/Q_\Gamma$ is a
hypersurface. If $\Gamma$ does not contain $\alpha_2$, then $\Gamma^{\sf n}$
contains $e_1-e_3$ and $e_2-e_4=\sigma(e_1-e_3)$, i.e., the minimal $G_0$-orbit
in $X=G/Q_\Gamma$ is not a hypersurface. If $n\geq 6$ and if $\Gamma$ does not
contain $\alpha_k$ for $3\leq k\leq n$, then $\Gamma^{\sf n}$ contains the two
real roots $e_1+e_2$ and $e_3+e_4$. On the other hand, for $n=4$ and $\Gamma=
\Pi\setminus\{\alpha_3\}$ we obtain $\Gamma^{\sf n}\cap\sigma(\Gamma^{\sf n})=
\{e_1+e_2\}$, hence the minimal orbit of ${\rm{SO}}^*(8)$ in $X=G/Q_\Gamma$ is
a hypersurface in this case. One checks directly that in the remaining cases
${\rm{SO}}^*(8)$ does not have a compact hypersurface orbit.

Suppose now that $n=2m+1\geq5$ is odd. In this case the involution of $\Delta$
is given by
\begin{equation*}
\sigma(e_k)=
\begin{cases}
e_{k+1}&:\text{$1\leq k\leq n-1$ is odd}\\
e_{k-1}&:\text{$1\leq k\leq n-1$ is even}\\
-e_k&:k=n\\
\end{cases}.
\end{equation*}
As above we see that for $\Gamma=\Pi\setminus\{\alpha_1\}$ we have $\Gamma^{\sf
n}\cap\sigma(\Gamma^{\sf n})=\{e_1+e_2\}$, while the minimal $G_0$-orbit in
$X=G/Q_\Gamma$ is not a hypersurface if $\Gamma$ does not contain $\alpha_k$
for $2\leq k\leq n$.

In summary, the only cases in which the minimal orbit of $G_0={\rm{SO}}^*(2n)$
in $X=G/Q_\Gamma$ is a hypersurface are $\Gamma=\Pi\setminus\{\alpha_1\}$ as
well as $n=4$ and $\Gamma=\Pi\setminus\{\alpha_3\}$.

\begin{rem}
The exceptional case $n=4$ is explained by $\lie{so}^*(8)\simeq\lie{so}(6,2)$
which corresponds to the fact that ${\rm{SO}}(8)/{\rm{U}}(4)$ is isomorphic to
the $3$-dimensional quadric.
\end{rem}

In the rest of this subsection we treat the case $\lie{g}_0=\lie{so}(p,q)$ with
$1\leq p\leq q$ and $p+q=2n$. The real rank of $\lie{g}_0$ is $p$ and the
restricted root system is ${\sf B}_p$ for $p<q$ and ${\sf D}_p$ for $p=q$.

\begin{rem}
The Lie algebra $\lie{so}(n,n)$ is a split real form of $\lie{g}$. The Lie
algebra $\lie{so}(1,2n-1)$ contains only one conjugacy class of Cartan
subalgebras, see~\cite[Appendix~C.3]{Kn}.
\end{rem}

The involution of $\Delta$ is induced by
\begin{equation*}
\sigma(e_k)=
\begin{cases}
e_k&:1\leq k\leq p\\
-e_k&:p+1\leq k\leq p+\left[\frac{q-p}{2}\right]=n\\
\end{cases}.
\end{equation*}

According to~\cite{HONO} the minimal $G_0$-orbit in $X=G/Q_\Gamma$ may be a
hypersurface only in the following cases. If $p=1$, then $\Gamma\subset\Pi$ is
arbitrary; if $p=2$, then $\Gamma$ coincides with $\Pi\setminus\{\alpha_k\}$ or
$\Pi\setminus\{\alpha_k,\alpha_{n-1}\}$ or $\Pi\setminus\{\alpha_k,\alpha_n\}$
for any $k$; if $p\geq3$, then the only possibilities for $\Gamma$ are
$\Pi\setminus\{\alpha_1\}$ or $\Pi\setminus\{\alpha_{n-1}\}$ or $\Pi\setminus
\{\alpha_n\}$.

Let us begin with the case $p\geq3$. If $\Gamma=\Pi\setminus\{\alpha_k\}$ for
$k=1,n-1,n$, then $\Gamma^{\sf n}$ contains the real roots $e_1+e_2$ and
$e_2+e_3$ so that the minimal $G_0$-orbit in $X=G/Q_\Gamma$ cannot be a
hypersurface.

Suppose now that $p=2$. If $\Gamma$ does not contain $\alpha_k$ for some $1\leq
k\leq n-2$, then $\Gamma^{\sf n}$ contains $e_1\pm e_n$. Since $\sigma(e_1-e_n)
=e_1+e_n$, the minimal $G_0$-orbit is not a hypersurface in this case. If
$\Gamma=\Pi\setminus\{\alpha_{n-1}\}$, then we have $\Gamma^{\sf n}=\bigr\{
e_1-e_n,\dotsc,e_{n-1}-e_n,e_k+e_l\ (1\leq k<l\leq n-1)\bigl\}$ and one
verifies $\Gamma^{\sf n}\cap\sigma(\Gamma^{\sf n})=\{e_1+e_2\}$. Hence, the
minimal $G_0$-orbit in $X=G/Q_\Gamma$ is a hypersurface. For $\Gamma=\Pi
\setminus\{\alpha_n\}$ we have $\Gamma^{\sf n}=\{e_k+e_l:\ 1\leq k<l\leq n\}$
and obtain again $\Gamma^{\sf n}\cap\sigma(\Gamma^{\sf n})=\{e_1+e_2\}$, which
leads to the same conclusion as above.

\begin{rem}
For $p=1$ the above considerations show that $G_0$ acts transitively on
$X=G/Q_\Gamma$ for $\Gamma=\Pi\setminus\{\alpha_k\}$ where $k=n-1,n$.
\end{rem}

In summary, the only cases in which the minimal orbit of $G_0={\rm{SO}}(p,q)$
in $X=G/Q_\Gamma$ is a hypersurface are $p=2$ and $\Gamma=\Pi\setminus
\{\alpha_k\}$ for $k=n-1,n$.

\subsection{The exceptional Lie algebra $\lie{g}=E_6$}

Combined with the general remarks in~\cite{Ara}, the Satake diagrams yield
explicit formulas of the involutions corresponding to the non-split non-compact
real forms of the exceptional Lie algebras $E_6$, $E_7$, $E_8$ and $F_4$.

Let $\lie{g}=E_6$. Identifying $\lie{h}_\mbb{R}^*$ with $V=\{x\in\mbb{R}^8;\
x_6=x_7=-x_8\}$ a system of simple roots is given by
\begin{equation*}
\Pi=\bigl\{\alpha_1=1/2(e_1-e_2-e_3-e_4-e_5-e_6-e_7+e_8, \alpha_2=e_1+e_2,
\alpha_j=e_{j-1}-e_{j-2} (3\leq j\leq6)\bigr\}.
\end{equation*}

The Lie algebra $\lie{g}=E_6$ has two non-split non-compact real forms, namely
$EII$ and $EIII$.

Suppose first that $\lie{g}_0=EII$. Since there is no imaginary simple root,
the Satake diagram of $\lie{g}_0$ determines directly the involution
$\sigma\colon\Delta^+\to\Delta^+$. More precisely, we have
\begin{equation*}
\sigma(\alpha_1)=\alpha_6, \sigma(\alpha_3)=\alpha_5,
\sigma(\alpha_2)=\alpha_2, \sigma(\alpha_4)=\alpha_4.
\end{equation*}
According to~\cite{HONO} we must only check $\Pi\setminus\{\alpha_1\}$ and
$\Pi\setminus\{\alpha_6\}$.

Let $\Gamma=\Pi\setminus\{\alpha_j\}$ for $j=1,6$. In both cases $\Gamma^{\sf
n}\cap\sigma (\Gamma^{\sf n})$ contains the two real roots
$\alpha_1+\alpha_3+\alpha_4+ \alpha_5+\alpha_6$ and
$\alpha_1+\alpha_2+\alpha_3+\alpha_4+\alpha_5+ \alpha_6$. Hence the minimal
$G_0$-orbit in $X=G/Q_\Gamma$ is not a hypersurface.

Suppose now that $\lie{g}_0=EIII$. It can be seen from its Satake diagram that
$\Pi_{\sf i}=\{\alpha_3,\alpha_4,\alpha_5\}$ and that
\begin{align*}
\sigma(\alpha_1)&=\alpha_6+C_{1,3}\alpha_3+C_{1,4}\alpha_4+C_{1,5}\alpha_5\\
\sigma(\alpha_2)&=\alpha_2+C_{2,3}\alpha_3+C_{2,4}\alpha_4+C_{2,5}\alpha_5\\
\sigma(\alpha_6)&=\alpha_1+C_{6,3}\alpha_3+C_{6,4}\alpha_4+C_{6,5}\alpha_5.
\end{align*}
Since $\sigma$ is involutive, we obtain $C_{6,j}=C_{1,j}$ for $j=3,4,5$. This
gives
\begin{equation*}
\sigma(\alpha_1+\alpha_3+\alpha_4+\alpha_5+\alpha_6)=\alpha_1+(2C_{1,3}-1)
\alpha_3+(2C_{1,4}-1)\alpha_4+(2C_{1,5}-1)\alpha_5+\alpha_6.
\end{equation*}
Comparison with the list of positive roots shows that $\alpha_1+\alpha_3+
\alpha_4+\alpha_5+\alpha_6$ must be a real root, i.e., that $C_{1,3}=C_{1,4}=
C_{1,5}=1$. Similarly, the only possibilities for $\sigma(\alpha_2)$ are
$\alpha_2$, $\alpha_2+\alpha_4$, $\alpha_2+\alpha_4+\alpha_5$,
$\alpha_2+\alpha_3+\alpha_4$, $\alpha_2+\alpha_3+\alpha_4+\alpha_5$ and
$\alpha_2+\alpha_3+2\alpha_4+\alpha_5$. However, since we know that
$\sigma(\alpha_2)-\alpha_2$ is not a root, we only have $\sigma(\alpha_2)=
\alpha_2$ or $\sigma(\alpha_2)=\alpha_2+\alpha_3+2\alpha_4+\alpha_5$. In the
first case we obtain $\sigma(\alpha_2+\alpha_4)=\alpha_2-\alpha_4$, which
contradicts the fact that $\Delta^+$ is a $\sigma$-order. Therefore, we see
that $\sigma(\alpha_2)=\alpha_2+\alpha_3+2\alpha_4+\alpha_5$.

Let $\Gamma=\Pi\setminus\{\alpha_j\}$ for $1\leq j\leq6$. Then $\Gamma^{\sf n}
\cap\sigma(\Gamma^{\sf n})$ contains always the roots $\alpha_1+\alpha_2+
\alpha_3+\alpha_4+\alpha_5+\alpha_6$ and
\begin{equation*}
\sigma(\alpha_1+\alpha_2+\alpha_3+\alpha_4+\alpha_5+\alpha_6)=
\alpha_1+\alpha_2+2\alpha_3+3\alpha_4+2\alpha_5+\alpha_6.
\end{equation*}
Consequently, the minimal $G_0$-orbit in $X=G/Q_\Gamma$ cannot be a
hypersurface for any $\Gamma\subset\Pi$.

\subsection{The exceptional Lie algebra $\lie{g}=E_7$}

Let $\lie{g}=E_7$. Identifying $\lie{h}_\mbb{R}^*$ with $V=\{x\in\mbb{R}^8;\
x_8=-x_7\}$ a system of simple roots is given by
\begin{equation*}
\Pi=\bigl\{\alpha_1=1/2(e_1-e_2-e_3-e_4-e_5-e_6-e_7+e_8), \alpha_2=e_1+e_2,
\alpha_j=e_{j-1}-e_{j-2} (3\leq j\leq7)\bigr\}.
\end{equation*}
The Lie algebra $\lie{g}=E_7$ has two non-split non-compact real forms, namely
$EVI$ and $EVII$.

Let $\lie{g}_0=EVI$. Its Satake diagram shows $\Pi_{\sf i}=\{\alpha_2,
\alpha_5, \alpha_7\}$. In a first step we determine the integers $C_{k,l}$
such that
\begin{equation*}
\sigma(\alpha_k)=\alpha_k+C_{k,2}\alpha_2+C_{k,5}\alpha_5+C_{k,7}\alpha_7
\end{equation*}
for $k=1,3,4,6$. One checks immediately that $\sigma(\alpha_1)=\alpha_1$ and
$\sigma(\alpha_3)=\alpha_3$. For the remaining cases the only possibilities
that respect $\sigma(\alpha_k)-\alpha_k\notin\Delta$ are
\begin{equation*}
\sigma(\alpha_4)=\alpha_4\text{ or }\sigma(\alpha_4)=\alpha_2+\alpha_4+\alpha_5
\end{equation*}
and
\begin{equation*}
\sigma(\alpha_6)=\alpha_6\text{ or }\sigma(\alpha_6)=\alpha_5+\alpha_6+\alpha_7.
\end{equation*}
Since $\alpha_4+\alpha_5,\alpha_5+\alpha_6\in\Delta^+$ we obtain
\begin{align*}
\sigma(\alpha_4)&=\alpha_2+\alpha_4+\alpha_5\text{ and}\\
\sigma(\alpha_6)&=\alpha_5+\alpha_6+\alpha_7.
\end{align*}
According to~\cite{HONO} the only possibility for a minimal orbit of
hypersurface type is $\Gamma=\Pi\setminus\{\alpha_7\}$. Since in this case
$\Gamma^{\sf n}$ contains the two real roots $\alpha_2+\alpha_3+2\alpha_4+
2\alpha_5+2\alpha_6+\alpha_7$ and $\alpha_1+\alpha_2+\alpha_3+2\alpha_4+
2\alpha_5+2\alpha_6+\alpha_7$, the minimal $G_0$-orbit in $X=G/Q_\Gamma$ cannot
be a hypersurface.

Let $\lie{g}_0=EVII$. Here we have $\Pi_{\sf i}=\{\alpha_2,\alpha_3,\alpha_4,
\alpha_5\}$ and we must determine
\begin{equation*}
\sigma(\alpha_k)=\alpha_k+C_{k,2}\alpha_2+C_{k,3}\alpha_3+C_{k,4}\alpha_4+
C_{k,5}\alpha_5
\end{equation*}
for $k=1,6,7$. One sees directly $\sigma(\alpha_7)=\alpha_7$. For the remaining
cases the only possibilities that respect $\sigma(\alpha_k)-\alpha_k
\notin\Delta$ are
\begin{equation*}
\sigma(\alpha_1)=\alpha_1\text{ or }\sigma(\alpha_1)=\alpha_1+\alpha_2+2
\alpha_3+2\alpha_4+\alpha_5
\end{equation*}
and
\begin{equation*}
\sigma(\alpha_6)=\alpha_6\text{ or }\sigma(\alpha_6)=\alpha_2+\alpha_3+
2\alpha_4+2\alpha_5+\alpha_6.
\end{equation*}
Since $\alpha_1+\alpha_3,\alpha_5+\alpha_6\in\Delta^+$ we obtain
\begin{align*}
\sigma(\alpha_4)&=\alpha_1+\alpha_2+2\alpha_3+2\alpha_4+\alpha_5\text{ and}\\
\sigma(\alpha_6)&=\alpha_2+\alpha_3+2\alpha_4+2\alpha_5+\alpha_6.
\end{align*}

Let $\Gamma=\Pi\setminus\{\alpha_k\}$ for $1\leq k\leq 7$. Then $\Gamma^{\sf n}
\cap\sigma(\Gamma^{\sf n})$ contains always $\alpha_1+\alpha_2+\alpha_3+
\alpha_4+\alpha_5+\alpha_6+\alpha_7$ and
\begin{equation*}
\sigma(\alpha_1+\alpha_2+\alpha_3+\alpha_4+\alpha_5+\alpha_6+\alpha_7)=
\alpha_1+\alpha_2+2\alpha_3+3\alpha_4+2\alpha_5+\alpha_6+\alpha_7.
\end{equation*}
Consequently, the minimal $G_0$-orbit in $X=G/Q_\Gamma$ is never a hypersurface.

\subsection{The exceptional Lie algebra $\lie{g}=E_8$}

According to~\cite{HONO} no real form of $G$ can have a compact hypersurface in
any $G$-homogeneous rational manifold.

\subsection{The exceptional Lie algebra $\lie{g}=F_4$}

The rank of $\lie{g}=F_4$ is $4$ and the root system is given by
\begin{equation*}
\Delta=\{e_k;\ 1\leq k\leq4\}\cup\{\pm e_k\pm e_l;\ 1\leq k<l\leq4\}\cup
\bigl\{1/2(\pm e_1\pm e_2\pm e_3\pm e_4)\bigr\}.
\end{equation*}
Choosing $\Delta^+=\{e_k\}\cup\{e_k\pm e_l\}\cup\{1/2(e_1\pm e_2\pm e_3\pm
e_4)\}$ we obtain
\begin{equation*}
\Pi=\{\alpha_1=1/2(e_1-e_2-e_3-e_4), \alpha_2=e_4, \alpha_3=e_3-e_4,
\alpha_4=e_2-e_3\}.
\end{equation*}
The non-compact real forms of $\lie{g}$ are $FI$ and $FII$. Since $FI$ is
split, we concentrate on $\lie{g}_0=FII$. According to~\cite[p.~21]{Ara} the
simple roots $\alpha_2$, $\alpha_3$ and $\alpha_4$ are imaginary while
$\sigma(\alpha_1)=\alpha_1+3\alpha_2+2\alpha_3+\alpha_4$. Equivalently, we
have $\sigma(e_1)=e_1$ and $\sigma(e_k)=-e_k$ for $2\leq k\leq 4$. One checks
that $\Delta^+$ is a $\sigma$-order.

A direct calculation shows that the minimal $G_0$-orbit in $X=G/Q_\Gamma$ is
never a hypersurface.

\subsection{The exceptional Lie algebra $\lie{g}=G_2$}

The only non-compact real form of $\lie{g}$ is split.

\end{appendix}

\end{document}